\theoremstyle{plain}
\newtheorem{corollary}{Corollary}
\newtheorem{definition}{Definition}
\newtheorem{lemma}{Lemma}
\newtheorem{remark}{Remark}
\newtheorem{theorem}{Theorem}
\numberwithin{equation}{section}
\begin{document}
\title{Global well-posedness of the energy subcritical nonlinear wave equation with initial data in a critical space}
\date{\today}
\author{Benjamin Dodson}
\maketitle

\begin{abstract}
In this paper we prove global well-posedness for the defocusing, energy-subcritical, nonlinear wave equation on $\mathbb{R}^{1 + 3}$ with initial data in a critical Besov space. No radial symmetry assumption is needed.
\end{abstract}

\section{Introduction}
In this paper, we continue the study of the defocusing, cubic nonlinear wave equation,
\begin{equation}\label{1.1}
u_{tt} - \Delta u + |u|^{p - 1} u = 0, \qquad u(0,x) = u_{0}, \qquad u_{t}(0,x) = u_{1}, \qquad 3 < p < 5,
\end{equation}
with initial data in a critical space. A critical space is a space that is invariant under the scaling symmetry. Observe that $(\ref{1.1})$ is invariant under the scaling symmetry
\begin{equation}\label{1.2}
u(t,x) \mapsto \lambda^{\frac{2}{p - 1}} u(\lambda t, \lambda x), \qquad \lambda > 0.
\end{equation}
Under the above scaling symmetry, the size of the initial data changes by a factor of
\begin{equation}\label{1.3}
\| \lambda u_{0}(\lambda x) \|_{\dot{H}^{s}(\mathbb{R}^{3})} = \lambda^{s + \frac{2}{p - 1} - \frac{3}{2}} \| u_{0} \|_{\dot{H}^{s}(\mathbb{R}^{3})}, \qquad \| \lambda^{2} u_{1}(\lambda x) \|_{\dot{H}^{s - 1}(\mathbb{R}^{3})} = \lambda^{s + \frac{2}{p - 1} - \frac{3}{2}} \| u_{1} \|_{\dot{H}^{s - 1}(\mathbb{R}^{3})}.
\end{equation}
Thus, $(\ref{1.1})$ is called $\dot{H}^{s_{c}} \times \dot{H}^{s_{c} - 1}$-critical when
\begin{equation}\label{1.3.1}
s_{c} = \frac{3}{2} - \frac{2}{p - 1},
\end{equation}
 because this norm is invariant under $(\ref{1.2})$.

The scaling symmetry $(\ref{1.2})$ completely determines the local well-posedness theory for $(\ref{1.1})$.

\begin{theorem}\label{t1.1}
Equation $(\ref{1.1})$ is locally well-posed for initial data in $(u_{0}, u_{1}) \in \dot{H}^{s_{c}}(\mathbb{R}^{3}) \times \dot{H}^{s_{c} - 1}(\mathbb{R}^{3})$ on some interval $[-T(u_{0}, u_{1}), T(u_{0}, u_{1})]$. The time of well-posedness $T(u_{0}, u_{1})$ depends on the profile of the initial data $(u_{0}, u_{1})$, not just its size. For data sufficiently small in $\dot{H}^{s_{c}} \times \dot{H}^{s_{c} - 1}$, global well-posedness and scattering hold.

Additional regularity is enough to give a lower bound on the time of well-posedness. Therefore, there exists some $T(\| u_{0} \|_{\dot{H}^{s}}, \| u_{1} \|_{\dot{H}^{s - 1}}) > 0$ for any $s_{c} < s < \frac{3}{2}$.

Negatively, equation $(\ref{1.1})$ is ill-posed for $u_{0} \in \dot{H}^{s}(\mathbb{R}^{3})$ and $u_{1} \in \dot{H}^{s - 1}(\mathbb{R}^{3})$ when $s < s_{c}$.
\end{theorem}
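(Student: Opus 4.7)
The plan is to prove this by the standard Strichartz-based contraction mapping argument, scaling-invariantly. Writing the equation in Duhamel form
\[
u(t) = \cos(t\sqrt{-\Delta})u_0 + \frac{\sin(t\sqrt{-\Delta})}{\sqrt{-\Delta}} u_1 - \int_0^t \frac{\sin((t-s)\sqrt{-\Delta})}{\sqrt{-\Delta}}\bigl(|u|^{p-1}u\bigr)\,ds,
\]
I would select a wave Strichartz pair $(q,r)$ with $\frac{1}{q}+\frac{3}{r} = \frac{3}{2} - s_c$ and satisfying the wave admissibility condition $\frac{1}{q}+\frac{1}{r} \leq \frac{1}{2}$, so that the norm $\|u\|_{L^q_t L^r_x}$ is invariant under \eqref{1.2}. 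A natural choice is one for which the H\"older exponent relation matches $|u|^{p-1}u$, giving the nonlinear bound $\||u|^{p-1}u-|v|^{p-1}v\|_{L^{\tilde q'}_t L^{\tilde r'}_x} \lesssim (\|u\|_{L^q_t L^r_x}^{p-1}+\|v\|_{L^q_t L^r_x}^{p-1}) \|u-v\|_{L^q_t L^r_x}$ after a dual Strichartz step.

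For the first assertion, the Strichartz estimate gives $\|S(t)(u_0,u_1)\|_{L^q_t L^r_x(\mathbb{R}\times\mathbb{R}^3)} \lesssim \|(u_0,u_1)\|_{\dot H^{s_c}\times \dot H^{s_c-1}}$; by monotone convergence (or density of Schwartz data), for each fixed profile one can choose $T=T(u_0,u_1)>0$ making $\|S(t)(u_0,u_1)\|_{L^q_t L^r_x([-T,T]\times\mathbb{R}^3)}$ smaller than a universal $\varepsilon$, which is \emph{not} controlled by the $\dot H^{s_c}\times\dot H^{s_c-1}$ norm alone (hence the dependence on the profile). A standard fixed point on the ball of radius $2\varepsilon$ in $L^q_t L^r_x$ then produces the solution. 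If $\|(u_0,u_1)\|_{\dot H^{s_c}\times\dot H^{s_c-1}}$ itself is small, the free solution is small in $L^q_t L^r_x(\mathbb{R}\times\mathbb{R}^3)$ and the same fixed point runs globally, yielding global well-posedness and scattering.

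For the second assertion, I would run the same contraction but in a Strichartz space adapted to regularity $s\in(s_c,\tfrac{3}{2})$; on a short interval $[-T,T]$, Sobolev embedding and H\"older in time give $\|S(t)(u_0,u_1)\|_{L^q_t L^r_x([-T,T]\times\mathbb{R}^3)} \lesssim T^\alpha (\|u_0\|_{\dot H^s}+\|u_1\|_{\dot H^{s-1}})$ for some $\alpha=\alpha(s)>0$, since the extra regularity $s-s_c$ converts into a positive power of $T$. Choosing $T$ as a function of the norms alone makes the free solution small and closes the contraction. For the third assertion, I would use \eqref{1.2}--\eqref{1.3} in a scaling/norm-inflation argument: starting from a fixed smooth solution with finite lifespan, the rescaling $u_\lambda(t,x)=\lambda^{2/(p-1)}u(\lambda t,\lambda x)$ has $\|(u_\lambda(0),\partial_t u_\lambda(0))\|_{\dot H^s\times \dot H^{s-1}}=\lambda^{s-s_c}\|(u(0),u_t(0))\|_{\dot H^s\times \dot H^{s-1}}\to 0$ as $\lambda\to\infty$ when $s<s_c$, while the lifespan shrinks as $\lambda^{-1}$; this is incompatible with any reasonable continuous data-to-solution map at the origin, so well-posedness fails.

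The main obstacle is the critical statement: because the lifespan depends on profile and not on norm, a purely quantitative Strichartz estimate is not enough; one must exploit the concentration-compactness-like fact that Schwartz functions are dense in $\dot H^{s_c}\times \dot H^{s_c-1}$ and the Strichartz norm of their free evolution has arbitrarily small tails, which is what separates the critical theory from the subcritical one. Setting up the nonlinear estimate in the chosen $L^q_t L^r_x$ space is mostly bookkeeping for non-algebraic $p$, handled by fractional chain rule (Christ--Weinstein type) inequalities, but is the one place where the range $3<p<5$ is genuinely used.
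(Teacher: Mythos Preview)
The paper does not actually prove this theorem; its ``proof'' is a one-line citation to Lindblad--Sogge \cite{lindblad1995existence}. Your sketch is exactly the standard Strichartz-plus-contraction argument that reference carries out, so in substance you agree with the paper (or rather with its source).

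One small correction on the ill-posedness paragraph: for the \emph{defocusing} equation $(\ref{1.1})$, smooth compactly supported data yields global solutions, so there is no ``fixed smooth solution with finite lifespan'' to rescale. The scaling heuristic is still the right one, but the conclusion you want is norm inflation or failure of uniform continuity of the solution map, not blowup of the lifespan. Concretely, take any nontrivial smooth solution $u$; the rescaled family $u_\lambda$ has data tending to zero in $\dot H^{s}\times\dot H^{s-1}$ for $s<s_c$, yet at times of order $\lambda^{-1}$ the solution retains a fixed nonzero $\dot H^{s_c}$-size (or a fixed $L^{2(p-1)}_{t,x}$ norm on the rescaled interval), which already contradicts continuity of the data-to-solution map at zero. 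Alternatively one invokes the focusing equation, which shares the same local theory and does admit finite-time blowup, to run your lifespan argument verbatim. Either route is standard; just avoid asserting finite lifespan for the defocusing problem itself.
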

\begin{proof}
See \cite{lindblad1995existence}.
\end{proof}

Local well-posedness is defined in the usual way.
\begin{definition}[Locally well-posed]\label{d1.2}
The initial value problem $(\ref{1.1})$ is said to be locally well-posed if there exists an open interval $I \subset \mathbb{R}$ containing $0$ such that:
\begin{enumerate}
\item A unique solution $u \in L_{t}^{\infty} \dot{H}^{s_{c}}(I \times \mathbb{R}^{3}) \cap L_{t,loc}^{2(p - 1)} L_{x}^{2(p - 1)}(I \times \mathbb{R}^{3})$, $u_{t} \in L_{t}^{\infty} \dot{H}^{s_{c} - 1}(I \times \mathbb{R}^{3})$ exists.

\item The solution $u$ is continuous in time, $u \in C(I ; \dot{H}^{s_{c}}(\mathbb{R}^{3}))$, $u_{t} \in C(I ; \dot{H}^{s_{c} - 1}(\mathbb{R}^{3}))$.

\item The solution $u$ depends continuously on the initial data in the topology of item one.
\end{enumerate}
\end{definition}

The scaling symmetry $(\ref{1.2})$ also completely determines the long time behavior of $(\ref{1.1})$ with radially symmetric initial data.
\begin{theorem}\label{t1.3}
For $3 \leq p < 5$, the initial value problem $(\ref{1.1})$ is globally well-posed and scattering for radial initial data $(u_{0}, u_{1}) \in \dot{H}^{s_{c}}(\mathbb{R}^{3}) \times \dot{H}^{s_{c} - 1}(\mathbb{R}^{3})$. Moreover, there exists a function $f : [3, 5) \times [0, \infty) \rightarrow [0, \infty)$ such that if $u$ solves $(\ref{1.1})$ with initial data $(u_{0}, u_{1}) \in \dot{H}^{s_{c}} \times \dot{H}^{s_{c} - 1}$, then
\begin{equation}\label{1.4}
\| u \|_{L_{t,x}^{2(p - 1)}(\mathbb{R} \times \mathbb{R}^{3})} \leq f(p, \| u_{0} \|_{\dot{H}^{s_{c}}(\mathbb{R}^{3})} + \| u_{1} \|_{\dot{H}^{s_{c} - 1}(\mathbb{R}^{3})}).
\end{equation}
\end{theorem}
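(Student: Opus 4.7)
The natural approach is the Kenig--Merle concentration-compactness and rigidity method, since the problem is critical at $\dot H^{s_c}\times\dot H^{s_c-1}$ but the conserved energy sits at the strictly higher regularity $\dot H^1\times L^2$. The strategy is to contradict failure of the uniform bound \eqref{1.4} by producing a ``minimal blow-up'' solution, and then to rule it out using radial symmetry plus a Morawetz-type monotonicity formula.

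First, combining Theorem \ref{t1.1} with Strichartz/Besov estimates for the wave equation on $\mathbb{R}^{1+3}$, the set $\mathcal{S}(E)=\{$ data with radial $\|u_0\|_{\dot H^{s_c}}+\|u_1\|_{\dot H^{s_c-1}}\le E$ whose solution has $\|u\|_{L^{2(p-1)}_{t,x}}<\infty\}$ contains a neighborhood of zero (small-data scattering) and is open (stability). Let $E_c$ be the supremum of $E$ for which every radial datum of size $\le E$ lies in $\mathcal{S}(E)$; the claim is $E_c=\infty$. Assuming $E_c<\infty$, one takes a sequence of radial data $(u_{0,n},u_{1,n})$ of size $\to E_c$ whose solutions $u_n$ have $\|u_n\|_{L^{2(p-1)}_{t,x}}\to\infty$. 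A linear profile decomposition à la Bahouri--G\'erard, adapted to radial data in $\dot H^{s_c}\times\dot H^{s_c-1}$, decomposes $(u_{0,n},u_{1,n})$ into asymptotically orthogonal rescaled and translated profiles plus a small remainder; radial symmetry forces the translation parameters to be trivial. The nonlinear perturbation lemma (from the local theory of Theorem \ref{t1.1}) then forces exactly one profile to account for all the mass, yielding a radial \emph{critical element} $u_c$ with data of size exactly $E_c$ whose forward trajectory in $\dot H^{s_c}\times\dot H^{s_c-1}$ is pre-compact modulo the scaling symmetry, i.e.\ there exists $\lambda(t)>0$ with $\{(\lambda(t)^{-\frac{2}{p-1}}u_c(t,\lambda(t)^{-1}\cdot),\lambda(t)^{-\frac{2}{p-1}-1}\partial_t u_c(t,\lambda(t)^{-1}\cdot))\}$ pre-compact.

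The rigidity step then eliminates $u_c$. Radial almost-periodic solutions split naturally into the self-similar/inverse-cascade regime ($\lambda(t)\to\infty$), the soliton-like regime ($\lambda(t)\sim 1$), and the forward-cascade regime ($\lambda(t)\to 0$). Using radial Sobolev embedding $|x|\,|u(t,x)|\lesssim\|u(t)\|_{\dot H^1}$ together with finite-speed propagation, one upgrades the $\dot H^{s_c}$ compactness to additional regularity, in particular establishes that $u_c\in L^\infty_t(\dot H^1\cap L^{p+1})$; hence the conserved energy is finite. From here, the classical Morawetz inequality
\[
\int_{\mathbb{R}}\int_{\mathbb{R}^3}\frac{|u_c(t,x)|^{p+1}}{|x|}\,dx\,dt \lesssim E(u_c)
\]
is absolutely convergent; combined with the lower bound coming from almost periodicity plus radiality (since all the mass concentrates near $|x|\lesssim \lambda(t)^{-1}$), this yields a contradiction in each of the three scenarios, ruling out $u_c$ and hence proving $E_c=\infty$. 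The bound $f$ in \eqref{1.4} is then produced by tracking the iteration in the contradiction argument.

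The main obstacle is the regularity upgrade: data live only at $\dot H^{s_c}$ with $s_c<1$, so \emph{a priori} the energy $E(u_c)$ is infinite and no Morawetz bound is directly available. This is where radiality is decisive; the radial Sobolev embedding and double-Duhamel/in-and-out decomposition allow one to trade spatial decay for regularity and propagate smoothness into the critical element. Making this rigorous in the full range $3<p<5$ (in particular for $p$ close to $3$, where $s_c$ is close to $1/2$) is the technical heart of the argument and the reason the non-radial analog remains nontrivial.
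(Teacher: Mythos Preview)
The paper does not actually prove Theorem \ref{t1.3}; it simply cites \cite{dodson2018global} for $p=3$ and \cite{dodson2018global2} for $3<p<5$. So your outline must be compared against those references rather than against anything in this paper.

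Your concentration-compactness half is correct and matches those papers: small-data scattering, stability, Bahouri--G\'erard profile decomposition adapted to $\dot H^{s_c}\times\dot H^{s_c-1}$, extraction of a radial almost-periodic minimal blow-up solution with trivial spatial translation.

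The rigidity half, however, diverges from what those papers actually do and contains a real gap. You propose to upgrade the critical element from $\dot H^{s_c}$ to $\dot H^1$ via radial Sobolev and double Duhamel, and then invoke the classical Morawetz estimate at the energy level. But the radial Sobolev inequality you quote, $|x|\,|u|\lesssim\|u\|_{\dot H^1}$, already presupposes the $\dot H^1$ control you are trying to establish; and the double-Duhamel regularity gain, while standard for Schr\"odinger (Killip--Visan), is considerably more delicate for wave because the propagator has no smoothing and finite speed of propagation interferes with the in/out decomposition. You acknowledge this as ``the technical heart,'' but as written there is no mechanism given for producing the first nontrivial bit of extra regularity.

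The actual argument in \cite{dodson2018global}, \cite{dodson2018global2} avoids this entirely. It passes to hyperbolic (conformal) coordinates $\tilde u(\tau,y)=e^\tau\frac{\sinh|y|}{|y|}\,u(e^\tau\cosh|y|,e^\tau\sinh|y|\frac{y}{|y|})$, exactly as this paper does in Section~4; the nonlinearity then carries an explicit damping factor $e^{-(p-3)\tau}$ (for $p>3$), and a virial/energy-flux argument in the transformed variables yields the rigidity directly, with the Besov-space result of Theorem~\ref{t1.5} serving as the stepping stone the paper mentions. No regularity upgrade to $\dot H^1$ is needed.
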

\begin{proof}
This was proved in \cite{dodson2018global} when $p = 3$ and in \cite{dodson2018global2} when $3 < p < 5$.
\end{proof}

\begin{remark}
The argument in \cite{lindblad1995existence} may be used to show that $(\ref{1.4})$ is equivalent to scattering in the critical Sobolev norm.
\end{remark}
\begin{definition}[Scattering]\label{d1.4}
A solution to $(\ref{1.1})$ with initial data $(u_{0}, u_{1})$ is said to be scattering in some $\dot{H}^{s}(\mathbb{R}^{3}) \times \dot{H}^{s - 1}(\mathbb{R}^{3})$ if there exist $(u_{0}^{+}, u_{1}^{+}), (u_{0}^{-}, u_{1}^{-}) \in \dot{H}^{s} \times \dot{H}^{s - 1}$ such that
\begin{equation}\label{1.5}
\lim_{t \rightarrow +\infty} \| (u(t), u_{t}(t)) - S(t)(u_{0}^{+}, u_{1}^{+}) \|_{\dot{H}^{s} \times \dot{H}^{s - 1}} = 0,
\end{equation}
and
\begin{equation}\label{1.6}
\lim_{t \rightarrow -\infty} \| (u(t), u_{t}(t)) - S(t)(u_{0}^{-}, u_{1}^{-}) \|_{\dot{H}^{s} \times \dot{H}^{s - 1}} = 0,
\end{equation}
where $u$ is the solution to $(\ref{1.1})$ with initial data $(u_{0}, u_{1})$ and $S(t)(f, g)$ is the solution operator to the linear wave equation. That is, if $(u(t), u_{t}(t)) = S(t)(f, g)$, then
\begin{equation}\label{1.7}
u_{tt} - \Delta u = 0, \qquad u(0,x) = f, \qquad u_{t}(0,x) = g.
\end{equation}

Equation $(\ref{1.1})$ is called scattering for data in a certain subset $X$ if the solution to $(\ref{1.1})$ with initial data in $X$ is globally well-posed, the solution scatters both forward and backward in time, and the scattering states $(u_{0}^{+}, u_{1}^{+})$ and $(u_{0}^{-}, u_{1}^{-})$ depend continuously on the initial data.
\end{definition}

An important stepping stone in the proof of Theorem $\ref{t1.3}$ was the result of \cite{dodson2018globalAPDE} for radially symmetric initial data in a critical Besov space.
\begin{theorem}\label{t1.5}
The defocusing, cubic nonlinear wave equation $((\ref{1.1})$ when $p = 3)$ is globally well-posed and scattering for radially symmetric initial data $u_{0} \in B_{1,1}^{2}$ and $u_{1} \in B_{1,1}^{1}$. $B_{p, q}^{s}$ is the Besov space defined by the norm
\begin{equation}\label{1.8}
\| u \|_{B_{p, q}^{s}(\mathbb{R}^{3})} = (\sum_{j} 2^{jsp} \| P_{j} u \|_{L^{q}}^{p})^{1/p}.
\end{equation}
The operator $P_{j}$ is the usual Littlewood--Paley projection operator.
\end{theorem}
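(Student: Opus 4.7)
The plan is to follow a concentration compactness and rigidity framework in the spirit of Kenig--Merle, exploiting the continuous embedding $B^{2}_{1,1}(\mathbb{R}^{3}) \hookrightarrow \dot H^{1/2}(\mathbb{R}^{3})$ together with the $\ell^{1}$ frequency structure encoded in the Besov norm. Small data scattering in $B^{2}_{1,1} \times B^{1}_{1,1}$ follows from the small data $\dot H^{1/2} \times \dot H^{-1/2}$ theory recorded in Theorem \ref{t1.1}, applied dyadically to each Littlewood--Paley piece of the initial data; the $\ell^{1}$ summability built into $B^{2}_{1,1}$ makes the cubic nonlinearity tractable without derivative loss.

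For large data I would argue by contradiction. Assume that there exists radial data of finite Besov norm whose solution fails to scatter, and consider the infimum of the Besov norm among such data. A linear profile decomposition in the Besov space, following Bahouri--G\'erard but adapted to the $B^{2}_{1,1}$ scale, combined with nonlinear perturbation theory in a suitable critical Strichartz norm, produces a minimal nonscattering solution $v$ whose trajectory $\{(v(t),v_{t}(t))\}$ is precompact in $B^{2}_{1,1} \times B^{1}_{1,1}$ modulo the scaling symmetry $(\ref{1.2})$. Radiality removes translation from the symmetry group, so the only surviving parameter is a continuous scale function $\lambda(t) > 0$.

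The third step is a rigidity argument ruling out such an almost periodic solution. For radial solutions of $(\ref{1.1})$ at $p = 3$, a Morawetz type inequality yields a weighted spacetime bound controlled by the $\dot H^{1/2} \times \dot H^{-1/2}$ norm, and combining this with radial Sobolev pointwise decay and the precompactness of the trajectory one expects to constrain $\lambda(t)$. The aim is to show that neither $\lambda(t) \to 0$, $\lambda(t) \to \infty$, nor $\lambda(t)$ staying bounded is compatible with both the Morawetz output and the nonvanishing Besov norm of $v$, yielding the desired contradiction.

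The hardest step is expected to be this final rigidity argument. Morawetz provides only subcritical spacetime information, and to extract a genuine scattering-norm bound one must leverage the sharp $\ell^{1}$ Besov structure together with bilinear or high--low frequency estimates in order to control the distribution of mass across scales. The Besov assumption is essential here, not merely as a conduit to $\dot H^{1/2}$ but because it allows each dyadic frequency band of $v$ to be handled uniformly in $\lambda(t)$; without such a uniform control I do not see how to close the contradiction. Radiality likewise enters crucially in two distinct places: it eliminates translation from the profile decomposition, and it supplies the pointwise decay needed to convert weighted Morawetz control into the critical spacetime bound that forces scattering.
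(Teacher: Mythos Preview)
The paper does not prove Theorem \ref{t1.5} from scratch; it is quoted from \cite{dodson2018globalAPDE}. However, the paper's proof of the generalization Theorem \ref{t1.6} (nonradial, $3<p<5$) makes the intended method clear, and it is \emph{not} a Kenig--Merle minimal-counterexample scheme. The argument is direct and constructive: the dispersive estimate $(\ref{1.9})$, which is exactly what the $B_{1,1}^{2}\times B_{1,1}^{1}$ norm buys, is used to split $u=v+w$ where $w$ solves the free equation with pointwise decay $\|w(t)\|_{L^\infty}\lesssim t^{-1}$ and $v$ has finite energy at time $1$. Global well-posedness then follows from a modified-energy/Gronwall computation on $E(v)$, with the dangerous cross term $\langle v_t,|v|^{p-1}w\rangle$ absorbed into a correction $\langle |v|^{p-1}v,w\rangle$; scattering is obtained by passing to hyperbolic (conformal) coordinates and controlling a hyperbolic energy. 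A profile decomposition enters only at the very end, and only to upgrade ``scattering for each datum'' to a bound uniform in the Besov norm.

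Your proposal takes a genuinely different route, and as written it has a real gap in the rigidity step. The claim that a Morawetz inequality for the cubic wave equation produces a spacetime bound controlled by the $\dot H^{1/2}\times\dot H^{-1/2}$ norm is not correct: the standard Morawetz multiplier $\partial_r+\frac{1}{r}$ yields quantities at the $\dot H^1$ level, and there is no known monotonicity formula at the $\dot H^{1/2}$ scale that would directly rule out a compact-modulo-scaling solution. You acknowledge this yourself when you say you ``do not see how to close the contradiction'' without additional frequency-localized input; that is precisely the obstruction. The point of the Besov hypothesis is not to feed a compactness machine but to supply the $L^1\to L^\infty$ dispersive decay that makes a direct energy argument possible---this is the idea your outline is missing. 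A secondary issue is that a Bahouri--G\'erard decomposition adapted to $B_{1,1}^{2}$ (a non-Hilbert, $\ell^1$-based space) is not standard and would need justification; the paper instead runs the profile decomposition in $\dot H^{s_c}$ and checks separately that each profile inherits the Besov bound via weak limits.
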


In this paper we generalize Theorem $\ref{t1.5}$ to the case when $3 < p < 5$ with nonradial initial data.
\begin{theorem}\label{t1.6}
Equation $(\ref{1.1})$ is globally well-posed when $3 < p < 5$ initial data $u_{0} \in B_{1,1}^{s_{c} + \frac{3}{2}}$ and $u_{1} \in B_{1,1}^{s_{c} + \frac{1}{2}}$. Furthermore, there exists $f : [3, 5) \times [0, \infty) \rightarrow [0, \infty)$ such that
\begin{equation}\label{1.8.1}
\| u \|_{L_{t,x}^{2(p - 1)}(\mathbb{R} \times \mathbb{R}^{3})} \leq f(p, \| u_{0} \|_{B_{1,1}^{s_{c} + \frac{3}{2}}(\mathbb{R}^{3})} + \| u_{1} \|_{B_{1,1}^{s_{c} + \frac{1}{2}}(\mathbb{R}^{3})}).
\end{equation}
\end{theorem}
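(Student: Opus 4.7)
The plan is to combine frequency-localized small-data scattering with a long-time perturbation argument, iterated dyadically over Littlewood--Paley frequency scales. The starting point is the Bernstein embedding $B_{1,1}^{s_{c} + 3/2} \hookrightarrow \dot{H}^{s_{c}}$ via the chain
\[
\| P_{N} u_{0} \|_{\dot{H}^{s_{c}}} \lesssim 2^{N s_{c}} \| P_{N} u_{0} \|_{L^{2}} \lesssim 2^{N(s_{c} + 3/2)} \| P_{N} u_{0} \|_{L^{1}} =: a_{N},
\]
with $\sum_{N} a_{N} < \infty$ (and analogously for $u_{1}$). Thus Theorem~\ref{t1.1} already furnishes a local-in-time solution; the role of the Besov hypothesis is not existence but an atomic decomposition of the data, each atom being an $L^{1}$-normalized bump that, after rescaling to unit frequency, sits at bounded size in $\dot{H}^{s_{c}}$.

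I would then perform induction on the dyadic frequency cutoff. For the high-frequency tail $u_{0}^{>M} = \sum_{N > M} P_{N} u_{0}$ with $M$ sufficiently large, $\| u_{0}^{>M} \|_{\dot{H}^{s_{c}}} \lesssim \sum_{N > M} a_{N}$ is small, so the small-data scattering part of Theorem~\ref{t1.1} yields a global solution with a small $L_{t,x}^{2(p-1)}$ bound. Descending one dyadic scale at a time (and subdividing each $P_M u_0$ if necessary to keep individual increments small), one invokes the long-time perturbation lemma for the nonlinear wave equation: given a known global solution with spacetime norm $B$, a perturbation of the data of $\dot{H}^{s_{c}} \times \dot{H}^{s_{c} - 1}$ size at most $\delta(B)$ produces a new global solution with spacetime norm at most $B + O(a_{M})$. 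Iterating until all scales are incorporated would yield the bound \eqref{1.8.1}.

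The main obstacle is that the naive induction breaks down unless the stability threshold $\delta(B)$ can be controlled as $B$ grows: the standard perturbation lemma for defocusing NLW furnishes $\delta(B)$ decaying faster than polynomially in $B$, while the Besov hypothesis provides only summability of $a_{N}$. The key technical input required is therefore an \emph{a priori} estimate on the spacetime norm that depends only on the Besov norm of the initial data, uniformly in the number of incorporated scales. In the radial setting underlying Theorem~\ref{t1.5}, this role is played by the weighted Morawetz inequality $\int_{\mathbb{R}} \int_{\mathbb{R}^{3}} |u|^{p+1}/|x| \, dx \, dt \lesssim E$ together with radial Sobolev embedding, which gives very strong pointwise control. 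In the nonradial setting of Theorem~\ref{t1.6}, that mechanism is unavailable; one must substitute a weaker global (or interaction) Morawetz-type estimate and recover the missing pointwise information through bilinear estimates that exploit dyadic frequency separation and the $L^{1}$-based structure of the Besov atoms. I expect constructing this a priori bound, and verifying its compatibility with the frequency induction, to be the central technical hurdle of the proof.
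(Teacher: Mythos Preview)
Your proposal has a genuine gap, and the strategy is quite different from the paper's.

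You correctly identify the obstacle in your own scheme: iterating the long-time perturbation lemma across dyadic scales requires the stability threshold $\delta(B)$ to degrade no worse than polynomially in the accumulated spacetime norm $B$, but the standard lemma gives exponential-type loss. You then hope for an a priori bound on $\|u\|_{L_{t,x}^{2(p-1)}}$ depending only on the Besov norm to rescue the induction; but such a bound is exactly the conclusion \eqref{1.8.1} you are trying to prove, so the argument is circular unless you supply a genuinely new mechanism. The bilinear/interaction-Morawetz route you sketch is speculative and, in the nonradial subcritical regime, no such estimate is known that would close your loop.

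The paper takes a completely different route and does not use frequency induction at all. The Besov hypothesis is exploited not as an atomic decomposition for perturbation, but through the dispersive estimate $\|S(t)(u_0,u_1)\|_{L^\infty}\lesssim t^{-1}\|(u_0,u_1)\|_{B_{1,1}^{2}\times B_{1,1}^{1}}$, which after rescaling gives $t^{-1}$-type pointwise decay for the linear evolution. The solution is then split as $u=v+w$, where $w$ solves the free equation and carries this decay, while $v$ absorbs a piece of the Duhamel term that turns out to have \emph{finite energy} in $\dot H^1\times L^2$ (even though the full data does not). Global well-posedness follows from a modified-energy/Gronwall argument for $E(v)$, using $\||\nabla|^{s_c-1/2}w\|_{L^\infty}\lesssim t^{-1}$ to control the cross terms. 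Scattering is obtained by passing to hyperbolic (conformal) coordinates inside a light cone and controlling a hyperbolic energy, together with local energy decay; finally the uniform bound \eqref{1.8.1} is deduced from a profile decomposition argument, where the dispersive estimate forces all time translations in the profiles to be bounded. None of these ingredients appears in your outline.
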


The $B_{1,1}^{s_{c} + \frac{3}{2}} \times B_{1,1}^{s_{c} + \frac{1}{2}}$ norm is invariant under the scaling symmetry $(\ref{1.2})$. By the Sobolev embedding theorem, $B_{1,1}^{s_{c} + \frac{3}{2}} \subset \dot{H}^{s_{c}}$ and $B_{1,1}^{s_{c} + \frac{1}{2}} \subset \dot{H}^{s_{c} - 1}$. The main advantage that $B_{1,1}^{s_{c} + \frac{3}{2}} \times B_{1,1}^{s_{c} + \frac{1}{2}}$ provides is the dispersive estimate for the wave equation
\begin{equation}\label{1.9}
\| S(t)(u_{0}, u_{1}) \|_{L^{\infty}} \lesssim \frac{1}{t} \| (\nabla^{2} u_{0}, \nabla u_{1}) \|_{L^{1} \times L^{1}},
\end{equation}
which implies good behavior for the solution to the linear wave equation with initial data $(u_{0}, u_{1})$ for $t \neq 0$. Therefore, a helpful heuristic in thinking about Theorem $\ref{t1.5}$ is that blowup of a solution to $(\ref{1.1})$ with initial data in $B_{1,1}^{2} \times B_{1,1}^{1}$ must occur when $t = 0$ if it occurs at all. Radial symmetry further implies that the blowup must occur at the origin in space and time.

The results in Theorem $\ref{t1.3}$ addressed initial data that was merely radially symmetric, but not in $B_{1,1}^{2} \times B_{1,1}^{1}$, so the blowup could occur at any time, but only at the origin in space, $x = 0$. Theorem $\ref{t1.6}$ approaches this problem from the other direction. The fact that $(u_{0}, u_{1}) \in B_{1,1}^{s_{c} + \frac{3}{2}} \times B_{1,1}^{s_{c} + \frac{1}{2}}$ means that, heuristically, the blowup may occur anywhere in $\mathbb{R}^{3}$, but only at time $t = 0$, if it occurs at all.

\subsection{Outline of the proof}
The only obstacle to proving Theorem $\ref{t1.6}$ is that the $\dot{H}^{s_{c}} \times \dot{H}^{s_{c} - 1}$ norm of $(u, u_{t})$ may blow up either forward and backward in time.
\begin{theorem}\label{t1.7}
Suppose $(u_{0}, u_{1}) \in \dot{H}^{s_{c}}(\mathbb{R}^{3}) \times \dot{H}^{s_{c} - 1}(\mathbb{R}^{3})$ and $u$ solves $(\ref{1.1})$ on a maximal interval $0 \in I \subset \mathbb{R}$, with $3 < p < 5$ and
\begin{equation}\label{1.9.1}
\sup_{t \in I} \| u(t) \|_{\dot{H}^{s_{c}}(\mathbb{R}^{3})} + \| u_{t}(t) \|_{\dot{H}^{s_{c} - 1}(\mathbb{R}^{3})} < \infty.
\end{equation}
Then $I = \mathbb{R}$ and the solution $u$ scatters both forward and backward in time.
\end{theorem}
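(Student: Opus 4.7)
The plan is to follow the Kenig--Merle concentration-compactness / rigidity roadmap at the critical regularity $\dot{H}^{s_c} \times \dot{H}^{s_c - 1}$. By the local theory of Theorem \ref{t1.1} and standard continuation arguments, it suffices to prove that under the hypothesis $(\ref{1.9.1})$ one has $\| u \|_{L^{2(p-1)}_{t,x}(I \times \mathbb{R}^{3})} < \infty$. Assume toward contradiction that this fails, and define
\[
L(A) \;=\; \sup\bigl\{ \| u \|_{L^{2(p-1)}_{t,x}(I \times \mathbb{R}^{3})} \,:\, u \text{ solves } (\ref{1.1}),\ \sup_{t \in I}\|(u,u_{t})\|_{\dot{H}^{s_c} \times \dot{H}^{s_c-1}} \le A \bigr\}.
\]
Small-data scattering in Theorem \ref{t1.1} forces $L(A) < \infty$ for small $A$, while our assumption forces $L(A) = \infty$ for large $A$. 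Let $A_{c}$ be the critical threshold.

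Next I would set up a linear profile decomposition for the free wave group in $\dot{H}^{s_c} \times \dot{H}^{s_c-1}$, in the style of Bahouri--G\'erard, adapted to the non-radial setting so that profiles are parametrized by scales $\lambda_{n}$, spatial translations $x_{n}$, and time translations $t_{n}$. Coupled with a perturbation/stability lemma built from the critical Strichartz estimates used in \cite{lindblad1995existence}, this extracts a minimal, non-scattering solution $u_{c}$ at the threshold $A_{c}$, whose orbit
\[
K \;=\; \Bigl\{ \bigl(\lambda(t)^{-2/(p-1)} u_{c}(t, \lambda(t)^{-1}(\cdot - x(t))),\ \lambda(t)^{-2/(p-1)-1} \partial_{t} u_{c}(t, \lambda(t)^{-1}(\cdot - x(t)))\bigr) \Bigr\}_{t \in I_{\max}}
\]
is precompact in $\dot{H}^{s_c} \times \dot{H}^{s_c-1}$ for some continuous functions $\lambda(t) > 0$ and $x(t) \in \mathbb{R}^{3}$.

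The rigidity step then rules out such a compact element. I would split into the usual two scenarios. In the \emph{soliton-like} case, where $\lambda(t)$ stays bounded above and below, apply the Morawetz inequality for $(\ref{1.1})$ of the form
\[
\int_{I} \int_{\mathbb{R}^{3}} \frac{|u|^{p+1}}{|x - x(t)|}\, dx\, dt \;\lesssim\; \sup_{t \in I}\|(u,u_{t})\|_{\dot{H}^{1/2} \times \dot{H}^{-1/2}},
\]
centered along the trajectory $x(t)$, combined with a pointwise lower bound on $\int |u_{c}|^{p+1}$ coming from precompactness (modulo translation), to derive a contradiction once one shows that $x(t)$ cannot escape too rapidly, e.g.\ $|x(t)|/|t| \to 0$, via finite speed of propagation. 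In the \emph{self-similar} case $\lambda(t) \to \infty$, a double-Duhamel argument exploiting the dispersive decay of the free wave together with the compactness of $K$ forces $u_{c} \equiv 0$, contradicting $\|u_{c}\|_{L^{2(p-1)}_{t,x}} = \infty$.

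The main obstacle is the non-radial setting. Without radial symmetry, the translation parameter $x(t)$ is genuinely present, and the standard Morawetz identity centered at the origin degenerates if $|x(t)| \to \infty$. The hard work lies in (i) showing that $x(t)$ has at most sublinear growth, which typically requires combining finite speed of propagation with a virial-type identity tailored to the energy-subcritical range $3 < p < 5$, and (ii) upgrading the Morawetz estimate to a version centered along $x(t)$ with enough robustness to close the argument in the range $\frac{1}{2} < s_{c} < 1$. A secondary technical point is extending the perturbation theory of \cite{lindblad1995existence} so that it applies uniformly to the concentration profiles, which is routine but must be carried out in the critical Besov spaces adapted to $L^{2(p-1)}_{t,x}$.
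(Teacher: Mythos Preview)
The paper does not prove Theorem~\ref{t1.7} at all: its ``proof'' is the single sentence ``This theorem was proved in \cite{dodson2020scattering},'' so the statement is quoted as a black-box input to the rest of the argument. Your proposal, by contrast, sketches an actual proof along the Kenig--Merle concentration-compactness/rigidity road map, which is indeed the strategy carried out in the cited reference. In that sense your approach is not wrong, but it is answering a different question than the one the paper poses: here the theorem is background, not something to be reproved.

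If you do want to reconstruct the argument of \cite{dodson2020scattering}, two points in your sketch need sharpening. First, the Morawetz inequality you wrote is the $\dot H^{1/2}\times\dot H^{-1/2}$ version; at regularity $s_c\in(\tfrac12,1)$ the right-hand side is not directly controlled by the hypothesis, and the actual rigidity in the non-radial subcritical case uses frequency-localized or interaction-type Morawetz/virial identities together with the extra smoothing coming from the Duhamel formula and compactness, rather than the classical scalar Morawetz alone. Second, the dichotomy you describe (``soliton-like'' versus ``self-similar with $\lambda(t)\to\infty$'') is incomplete: one must also exclude finite-time blow-up scenarios where $\lambda(t)\to 0$ as $t\to T_{\max}<\infty$, and the control of $x(t)/t$ you allude to requires a genuine argument (in \cite{dodson2020scattering} this is handled via a combination of finite propagation speed and a virial identity adapted to the critical regularity, not just an appeal to finite speed). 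These are exactly the ``obstacles'' you flag at the end, but they are the entire content of the theorem, not side issues.
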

\begin{proof}
This theorem was proved in \cite{dodson2020scattering}.
\end{proof}

While there is no known conserved quantity that controls the $\dot{H}^{s_{c}} \times \dot{H}^{s_{c} - 1}$ norm of $(u(t), u_{t}(t))$ for a solution to $(\ref{1.1})$ with generic initial data $(u_{0}, u_{1}) \in \dot{H}^{s_{c}} \times \dot{H}^{s_{c} - 1}$, a solution to $(\ref{1.1})$ does have the conserved energy
\begin{equation}\label{1.10}
E(u(t)) = \frac{1}{2} \int |\nabla u(t,x)|^{2} dx + \frac{1}{2} \int u_{t}(t,x)^{2} dx + \frac{1}{p + 1} \int |u(t,x)|^{p + 1} dx = E(u(0)).
\end{equation}
For $u_{0} \in \dot{H}^{1} \cap \dot{H}^{s_{c}}$ and $u_{1} \in L^{2} \cap \dot{H}^{s_{c} - 1}$, the Sobolev embedding theorem implies
\begin{equation}\label{1.11}
\| u(0) \|_{L^{p + 1}(\mathbb{R}^{3})}^{p + 1} \lesssim \| u_{0} \|_{\dot{H}^{s_{c}}(\mathbb{R}^{3})}^{p - 1} \| u_{0} \|_{\dot{H}^{1}(\mathbb{R}^{3})}^{2},
\end{equation}
so
\begin{equation}\label{1.12}
E(u(0)) \lesssim_{\| u_{0} \|_{\dot{H}^{s_{c}}}} \| u_{0} \|_{\dot{H}^{1}}^{2} + \| u_{1} \|_{L^{2}}^{2}.
\end{equation}
Conservation of energy then implies a uniform bound on the $\| (u(t), u_{t}(t)) \|_{\dot{H}^{1} \times L^{2}}$ norm for the entire time of existence of $u$, which by Theorem $\ref{t1.1}$ implies that the solution to $(\ref{1.1})$ with initial data $u_{0} \in \dot{H}^{1} \cap \dot{H}^{s_{c}}$ and $u_{1} \in L^{2} \cap \dot{H}^{s_{c} - 1}$ is global.

For generic initial data $u_{0} \in B_{1,1}^{s_{c} + \frac{3}{2}}$ and $u_{1} \in B_{1,1}^{s_{c} + \frac{1}{2}}$, there is no reason to think that the initial data lies in $\dot{H}^{1} \times L^{2}$. However, using the dispersive estimate $(\ref{1.9})$, we can split a solution $u(t)$ into a piece lying in $\dot{H}^{1} \times L^{2}$ and a piece with good decay estimates as $t$ becomes large. A similar computation was used in \cite{dodson2018globalAPDE} to prove Theorem $\ref{t1.5}$.

The local well-posedness result of Theorem $\ref{t1.1}$ implies that there exists an open neighborhood $I$ of $0$ for which $(\ref{1.1})$ has a solution, and
\begin{equation}\label{1.13}
\| u \|_{L_{t,x}^{2(p - 1)}(I \times \mathbb{R}^{3})} \leq \epsilon,
\end{equation}
for some $\epsilon > 0$ small. Rescaling by $(\ref{1.2})$,
\begin{equation}\label{1.14}
\| u \|_{L_{t,x}^{2(p - 1)}([-1, 1] \times \mathbb{R}^{3})} \leq \epsilon.
\end{equation}
This solution satisfies Duhamel's principle
\begin{equation}\label{1.15}
u(t) = S(t)(u_{0}, u_{1}) - \int_{0}^{t} S(t - \tau)(0, |u|^{p - 1} u) d\tau.
\end{equation}

Next, combining the dispersive estimate $(\ref{1.9})$ and local well-posedness theory, it is possible to prove that 
\begin{equation}
t^{\frac{2 - s_{c}}{p}} \| u(t) \|_{L^{2p}},
\end{equation}
is uniformly bounded for all $t \in [-1, 1]$. Therefore, by standard energy estimates,
\begin{equation}\label{1.16}
\| \int_{1/2}^{1} S(1 - \tau)(0, |u|^{p - 1} u) d\tau \|_{\dot{H}^{1} \times L^{2}} \lesssim 1,
\end{equation}
with implicit constant bounded by the norm of the initial data in $B_{1,1}^{s_{c} + \frac{3}{2}} \times B_{1,1}^{s_{c} + \frac{1}{2}}$.

Let
\begin{equation}\label{1.17}
v(1) = \int_{1/2}^{1} S(1 - \tau)(0, |u|^{p - 1} u) d\tau, \qquad v_{t}(1) = \partial_{t} \int_{1/2}^{t} S(t - \tau)(0, |u|^{p - 1} u) d\tau|_{t = 1},
\end{equation}
and let
\begin{equation}\label{1.18}
w(1) = u(1) - v(1), \qquad w_{t}(1) = u_{t}(1) - v_{t}(1).
\end{equation}
It follows from $(\ref{1.14})$ and Theorem $\ref{t1.1}$ that $(\ref{1.1})$ has a local solution on $[1, T)$ for some $T > 1$. Decompose this solution $u = v + w$, which solve
\begin{equation}\label{1.19}
\aligned
w_{tt} - \Delta w = 0, \qquad w(1,x) = w(1), \qquad w_{t}(1, x) = w_{t}(1), \\
v_{tt} - \Delta v + u^{3} = 0, \qquad v(1,x) = v(1), \qquad v_{t}(1,x) = v_{t}(1).
\endaligned
\end{equation}
To prove that $T$ may be extended to $T = \infty$, it is enough to prove that $E(v(t))$, where $E$ is given by $(\ref{1.10})$, is uniformly bounded on any compact subset of $[1, \infty)$. To see why, first note that $w_{tt} - \Delta w = 0$ has a global solution. Next, the rescaling used to obtain $(\ref{1.14})$ will be used to show that for any $T \geq 0$,
\begin{equation}\label{1.20}
\| w \|_{L_{t,x}^{2(p - 1)}([T, T + 1] \times \mathbb{R}^{3})} \leq \frac{\epsilon}{2}.
\end{equation}
Therefore, using standard perturbative arguments,
\begin{equation}\label{1.21}
v_{tt} - \Delta v + |u|^{p - 1} u = 0,
\end{equation}
may be treated as a perturbation of
\begin{equation}\label{1.22}
v_{tt} - \Delta v + |v|^{p - 1} v = 0,
\end{equation}
on short time intervals. Therefore, if $E(v(t_{0})) < \infty$, $(\ref{1.1})$ is locally well-posed on the interval $[t_{0}, t_{0} + \frac{1}{E(v(t_{0}))}]$, so it is enough to prove that $E(v(t))$ is uniformly bounded on any compact subset of $[1, \infty)$.

To prove the uniform bound, standard calculations imply
\begin{equation}\label{1.23}
\frac{d}{dt} E(v(t)) = -\langle v_{t}, |u|^{p - 1} u - |v|^{p - 1} v \rangle.
\end{equation}
The most difficult component of $(\ref{1.23})$ is a term of the form
\begin{equation}\label{1.24}
-\langle v_{t}, v^{p - 1} w \rangle  \lesssim \| |\nabla|^{s_{c} - \frac{1}{2}} w \|_{L^{\infty}} E(v(t)).
\end{equation}
Using the dispersive estimate $(\ref{1.9})$ it is possible to prove $\| |\nabla|^{s_{c} - \frac{1}{2}} w \|_{L^{\infty}} \lesssim \frac{1}{t}$. Plugging this estimate into $(\ref{1.24})$ and using Gronwall's inequality then proves a uniform bound on $E(v(t))$ on any compact subset, completing the proof of global well-posedness.

The above computations are not enough to prove scattering. In fact, even if one assumed initial data $u_{0} \in \dot{H}^{1} \cap \dot{H}^{s_{c}}$ and $u_{1} \in L^{2} \cap \dot{H}^{s_{c} - 1}$, conservation of energy would not guarantee a uniform bound on $\| u(t), u_{t}(t) \|_{\dot{H}^{s_{c}} \times \dot{H}^{s_{c} - 1}}$. Indeed, recall that \cite{strauss1968decay} assumed sufficient decay on the initial data.

However, the Lebesgue dominated convergence theorem implies that outside a compact set, the initial data has small $\dot{H}^{s_{c}} \times \dot{H}^{s_{c} - 1}$ norm. By finite propagation speed, this implies scattering outside a light cone. Inside the light cone, we follow \cite{shen2014energy}, \cite{shen2017scattering}, \cite{dodson2018globalAPDE}, \cite{dodson2018global}, \cite{dodson2018global2} and make a conformal change of coordinates to prove that this solution scatters.

We obtain the bound $(\ref{1.8.1})$ using the profile decomposition argument in \cite{ramos2012refinement}.

\section{Local behavior of the solution to $(\ref{1.1})$}
Using $(\ref{1.2})$, it is possible to rescale equation $(\ref{1.1})$ so that $(\ref{1.1})$ is locally well-posed on $[-1, 1]$ and the solution satisfies
\begin{equation}\label{2.1}
\| u \|_{L_{t,x}^{2(p - 1)}([-1, 1] \times \mathbb{R}^{3})} \leq \epsilon.
\end{equation}
\begin{proof}[Proof of $\ref{2.1}$]Recall the Strichartz estimates for the wave equation.
\begin{theorem}\label{t2.1}
Let $I$ be a time interval and let $u : I \times \mathbb{R}^{3} \rightarrow \mathbb{R}$ be a Schwartz solution to the wave equation 
\begin{equation}\label{2.2}
u_{tt} - \Delta u = F, \qquad u(0) = u_{0}, \qquad \partial_{t} u(0) = u_{1},
\end{equation}
where $0 \in I$. Then we have the estimates,
\begin{equation}\label{2.3}
\| u \|_{L_{t}^{q} L_{x}^{r}(I \times \mathbb{R}^{3})} + \| u \|_{C_{t}^{0} \dot{H}_{x}^{s}(I \times \mathbb{R}^{3})} + \| \partial_{t} u \|_{C_{t}^{0} \dot{H}_{x}^{s}(I \times \mathbb{R}^{3})} \lesssim_{q, r, s} (\| u_{0} \|_{\dot{H}^{s}(\mathbb{R}^{3})} + \| u_{1} \|_{\dot{H}^{s - 1}(\mathbb{R}^{3})} + \| F \|_{L_{t}^{\tilde{q}'} L_{x}^{\tilde{r}'}(I \times \mathbb{R}^{3})}),
\end{equation}
for any $s \geq 0$, $2 < q, \tilde{q} \leq \infty$, and $2 \leq r, \tilde{r} < \infty$ obey the scaling condition,
\begin{equation}\label{2.4}
\frac{1}{q} + \frac{3}{r} = \frac{3}{2} - s = \frac{1}{\tilde{q}'} + \frac{3}{\tilde{r}'} - 2,
\end{equation}
and satisfy the wave admissibility conditions
\begin{equation}\label{2.5}
\frac{1}{q} + \frac{1}{r}, \qquad \frac{1}{\tilde{q}} + \frac{1}{\tilde{r}} \leq \frac{1}{2}.
\end{equation}
\end{theorem}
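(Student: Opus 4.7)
The plan is to prove Theorem \ref{t2.1} by the classical recipe: derive the dispersive estimate for the half-wave propagator, interpolate with $L^{2}$ conservation, and then close the argument with $TT^{*}$ and the Christ-Kiselev lemma. First I would reduce matters to the half-wave operators via
\begin{equation*}
S(t)(u_{0},u_{1}) = \cos(t\sqrt{-\Delta})\, u_{0} + \frac{\sin(t\sqrt{-\Delta})}{\sqrt{-\Delta}}\, u_{1},
\end{equation*}
so it suffices to estimate $e^{\pm it\sqrt{-\Delta}}$. The dispersive estimate $(\ref{1.9})$ is equivalent to $\|e^{\pm it\sqrt{-\Delta}} f\|_{L^{\infty}_{x}} \ls |t|^{-1}\||\nabla|^{2} f\|_{L^{1}_{x}}$, and interpolating with the $L^{2}$ isometry via Riesz-Thorin yields
\begin{equation*}
\|e^{\pm it\sqrt{-\Delta}} f\|_{L^{r}_{x}} \ls |t|^{-(1-\frac{2}{r})} \||\nabla|^{2(1-\frac{2}{r})} f\|_{L^{r'}_{x}}, \qquad 2 \le r \le \infty.
\end{equation*}

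For the homogeneous part of $(\ref{2.3})$, I would apply $TT^{*}$ with $Tf := |\nabla|^{-s} e^{it\sqrt{-\Delta}} f$, viewed as an operator $L^{2} \to L^{q}_{t} L^{r}_{x}$. The $TT^{*}$ kernel is $|\nabla|^{-2s} e^{i(t-\tau)\sqrt{-\Delta}}$, whose $L^{r'}_{x} \to L^{r}_{x}$ norm is controlled by the displayed fixed-time estimate; the scaling relation $(\ref{2.4})$ is precisely what makes the derivative balance vanish at sharp admissible pairs $\frac{1}{q}+\frac{1}{r} = \frac{1}{2}$, leaving a pure decay of $|t-\tau|^{-2/q}$. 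The one-dimensional Hardy-Littlewood-Sobolev inequality then closes the $TT^{*}$ estimate provided $q > 2$, which is exactly the hypothesis. Non-sharp admissible pairs (where $\frac{1}{q}+\frac{1}{r} < \frac{1}{2}$) follow from the sharp case by Sobolev embedding in $x$, while the continuity-in-time statements $u \in C^{0}_{t}\dot H^{s}_{x}$ and $\partial_{t} u \in C^{0}_{t}\dot H^{s-1}_{x}$ follow by the usual density argument.

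For the inhomogeneous contribution with forcing $F$, I would apply the Christ-Kiselev lemma to convert the retarded Duhamel integral into a non-retarded operator; this is legitimate exactly when $\tilde q' < q$, which is guaranteed by the strict admissibility $q, \tilde q > 2$. The strict exclusions $q > 2$ and $r < \infty$ built into the statement of Theorem \ref{t2.1} conveniently sidestep the genuinely sharp wave endpoint $L^{2}_{t} L^{\infty}_{x}$ in three dimensions, which fails, as well as the Keel-Tao borderline pairs that would otherwise demand a bilinear atomic-decomposition argument; this endpoint, not needed here, is where the main obstacle would lie in a more general statement. Since Theorem \ref{t2.1} in its stated form is entirely classical, in practice I would simply refer the reader to Keel-Tao and proceed with the application to $(\ref{2.1})$.
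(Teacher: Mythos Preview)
Your proposal is correct and matches the paper's treatment: the paper does not prove Theorem~\ref{t2.1} at all but simply cites \cite{tao2006nonlinear} together with the original references \cite{strichartz1977restrictions}, \cite{kato1994q}, \cite{ginibre1995generalized}, \cite{kapitanski1989some}, \cite{lindblad1995existence}, \cite{sogge1995lectures}, \cite{shatah2000geometric}, \cite{keel1998endpoint}. Your sketch (dispersive estimate, interpolation, $TT^{*}$ plus Hardy--Littlewood--Sobolev, Christ--Kiselev for the Duhamel term) is exactly the classical route contained in those references, and your closing remark---that in practice one should just cite Keel--Tao and move on---is precisely what the paper does.
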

\begin{proof}
This theorem is copied from \cite{tao2006nonlinear}. See \cite{strichartz1977restrictions}, \cite{kato1994q}, \cite{ginibre1995generalized}, \cite{kapitanski1989some}, \cite{lindblad1995existence}, \cite{sogge1995lectures}, \cite{shatah2000geometric}, \cite{keel1998endpoint} for the proof of this theorem.
\end{proof}

By Theorem $\ref{t2.1}$, if $u$ solves $(\ref{2.1})$, then
\begin{equation}\label{2.6}
\| u \|_{L_{t}^{q} L_{x}^{r} \cap L_{t,x}^{2(p - 1)}([-1,1] \times \mathbb{R}^{3})} \lesssim_{p} \| u_{0} \|_{\dot{H}^{s_{c}}} + \| u_{1} \|_{\dot{H}^{s_{c} - 1}} + \| F \|_{L_{t}^{\tilde{q}'} L_{x}^{\tilde{r}'}([-1,1] \times \mathbb{R}^{3})},
\end{equation}
where
\begin{equation}\label{2.7}
\frac{1}{q} = \frac{1}{2} s_{c}, \qquad \frac{1}{r} = \frac{1}{2} - \frac{1}{2} s_{c}, \qquad s_{c} = \frac{3}{2} - \frac{2}{p - 1}, \qquad \frac{1}{\tilde{q}'} = \frac{1}{q} + \frac{1}{2}, \qquad \frac{1}{\tilde{r}'} = \frac{1}{r} + \frac{1}{2}.
\end{equation}
When $3 < p < 5$, $(q, r)$ is an admissible pair that satisfies $(\ref{2.5})$, and $\tilde{q}'$ and $\tilde{r}'$ satisfies $(\ref{2.4})$.

Since $(u_{0}, u_{1}) \in B_{1,1}^{s_{c} + \frac{3}{2}} \times B_{1,1}^{s_{c} + \frac{1}{2}}$, there exists some $j_{0} \in \mathbb{Z}$ such that
\begin{equation}\label{2.8}
\sum_{j \geq j_{0}} 2^{j(s_{c} + \frac{3}{2})} \| P_{j} u_{0} \|_{L^{1}} + 2^{j(s_{c} + \frac{1}{2})} \| P_{j} u_{1} \|_{L^{1}} \leq c \epsilon,
\end{equation}
for some $c > 0$ that is determined by the implicit constant in $(\ref{2.3})$. Using $(\ref{1.2})$, rescale so that 
\begin{equation}\label{2.9}
2^{j_{0}(1 - s_{c})} \cdot \| (u_{0}, u_{1}) \|_{B_{1,1}^{s_{c} + \frac{3}{2}} \times B_{1,1}^{s_{c} + \frac{1}{2}}} \leq c \epsilon.
\end{equation}
Theorem $\ref{t2.1}$ and $(\ref{2.8})$ imply
\begin{equation}\label{2.10}
\| S(t)(P_{\geq j_{0}} u_{0}, P_{\geq j_{0}} u_{1}) \|_{L_{t}^{q} L_{x}^{r} \cap L_{t,x}^{2(p - 1)}(\mathbb{R} \times \mathbb{R}^{3})} \leq \frac{\epsilon}{4}.
\end{equation}
Also, by the Sobolev embedding theorem, $(\ref{2.8})$, and the fact that $S(t)$ is a unitary operator on $\dot{H}^{s} \times \dot{H}^{s - 1}$,
\begin{equation}\label{2.11}
\| S(t)(P_{\leq j_{0}} u_{0}, P_{\leq j_{0}} u_{1}) \|_{L_{t}^{\infty} L^{2(p - 1)}(\mathbb{R} \times \mathbb{R}^{3})} \leq \frac{\epsilon}{4},
\end{equation}
so by H{\"o}lder's inequality,
\begin{equation}\label{2.12}
\| S(t)(u_{0}, u_{1}) \|_{L_{t,x}^{2(p - 1)}([-1, 1] \times \mathbb{R}^{3})} \leq \frac{3\epsilon}{4}.
\end{equation}
A similar calculation also implies
\begin{equation}\label{2.12.1}
\| S(t)(u_{0}, u_{1}) \|_{L_{t}^{q} L_{x}^{r}([-1, 1] \times \mathbb{R}^{3})} \leq \frac{3\epsilon}{4}.
\end{equation}
Plugging $(\ref{2.12})$ into $(\ref{1.15})$ and using $(\ref{2.3})$ and Picard iteration implies that for $\epsilon > 0$ sufficiently small, $(\ref{1.1})$ is locally well-posed on $[-1, 1]$, and the solution satisfies
\begin{equation}\label{2.13}
\| u \|_{L_{t,x}^{2(p - 1)}([-1,1] \times \mathbb{R}^{3})} \leq \epsilon.
\end{equation}
See \cite{lindblad1995existence} for a detailed proof.
\end{proof}

The constant $\epsilon > 0$ will eventually be chosen to depend on $\| u_{0} \|_{B_{1,1}^{s_{c} + \frac{3}{2}}} + \| u_{1} \|_{B_{1,1}^{s_{c} + \frac{1}{2}}}$. Under $(\ref{2.1})$, the behavior of $u$ on the interval $[-1, 1]$ is approximately linear.
\begin{theorem}\label{t2.2}
If $u$ is a solution to $(\ref{1.1})$ on $[-1, 1]$ with $\| u \|_{L_{t,x}^{2(p - 1)}([-1,1] \times \mathbb{R}^{3})} \leq \epsilon(A)$, where $(u_{0}, u_{1}) \in B_{1,1}^{3/2 + s_{c}} \times B_{1,1}^{1/2 + s_{c}}$ with $A = \| u_{0} \|_{B_{1,1}^{3/2 + s_{c}}} + \| u_{1} \|_{B_{1,1}^{1/2 + s_{c}}}$, then
\begin{equation}\label{2.14}
\sum_{j} 2^{j s_{c}} \| P_{j} u \|_{L_{t}^{\infty} L_{x}^{2}([-1, 1] \times \mathbb{R}^{3})} \lesssim A.
\end{equation}
\end{theorem}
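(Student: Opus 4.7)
The approach is a Picard / bootstrap argument in a Besov-strengthened Strichartz space, whose norm controls $\sum_j 2^{js_c}\|P_j u\|_{L^\infty_t L^2_x}$, exploiting the smallness $\|u\|_{L^{2(p-1)}_{t,x}} \leq \epsilon$ to absorb the nonlinear contribution. Define
\begin{equation*}
X = \sum_j \Big(\|P_j u\|_{L^q_t L^r_x} + \|P_j u\|_{L^{2(p-1)}_{t,x}} + 2^{js_c}\|P_j u\|_{L^\infty_t L^2_x}\Big),
\end{equation*}
with all norms over $[-1,1] \times \mathbb{R}^3$ and $(q, r)$ the admissible pair from $(\ref{2.7})$. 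The target $(\ref{2.14})$ will follow from $X \lesssim A$.

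Apply $P_j$ to Duhamel's formula $(\ref{1.15})$ and invoke Theorem $\ref{t2.1}$ at regularity $s = s_c$ with the admissible pair $(q, r)$ and dual pair $(\tilde q', \tilde r')$ from $(\ref{2.7})$. Since $P_j u$ has Fourier support in $\{|\xi| \sim 2^j\}$, $\|P_j u\|_{\dot H^{s_c}} \sim 2^{js_c}\|P_j u\|_{L^2}$, and Bernstein $\|P_j f\|_{L^2} \lesssim 2^{3j/2}\|P_j f\|_{L^1}$ converts the $L^2$ data norms into the Besov $L^1$ norms of $(\ref{2.8})$. This yields, pointwise in $j$,
\begin{equation*}
\|P_j u\|_{L^q_t L^r_x} + \|P_j u\|_{L^{2(p-1)}_{t,x}} + 2^{js_c}\|P_j u\|_{L^\infty_t L^2_x} \lesssim 2^{j(s_c + 3/2)}\|P_j u_0\|_{L^1} + 2^{j(s_c + 1/2)}\|P_j u_1\|_{L^1} + \|P_j(|u|^{p-1}u)\|_{L^{\tilde q'}_t L^{\tilde r'}_x},
\end{equation*}
and summing in $j$ gives $X \lesssim A + \sum_j \|P_j(|u|^{p-1}u)\|_{L^{\tilde q'}_t L^{\tilde r'}_x}$. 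A Besov-space Moser estimate of the form
\begin{equation*}
\sum_j \|P_j(|u|^{p-1}u)\|_{L^{\tilde q'}_t L^{\tilde r'}_x} \lesssim \|u\|_{L^{2(p-1)}_{t,x}}^{p-1} \cdot X \leq \epsilon^{p-1} X
\end{equation*}
then closes the bootstrap: for $\epsilon(A)$ small, $X \lesssim A$, and $(\ref{2.14})$ is the third sub-sum of $X$.

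The main obstacle is justifying the Besov Moser estimate for non-integer $p$. The H\"older exponent identities $\frac{1}{\tilde q'} = \frac{1}{q} + \frac{1}{2}$ and $\frac{1}{\tilde r'} = \frac{1}{r} + \frac{1}{2}$ from $(\ref{2.7})$, together with $\||u|^{p-1}\|_{L^2_{t,x}} = \|u\|_{L^{2(p-1)}_{t,x}}^{p-1}$, immediately give the unlocalized bound $\||u|^{p-1}u\|_{L^{\tilde q'}_t L^{\tilde r'}_x} \lesssim \|u\|_{L^q_t L^r_x} \|u\|_{L^{2(p-1)}_{t,x}}^{p-1}$; the challenge is placing the Littlewood--Paley $\sum_j$ outside. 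When $p = 3$, $u^3$ is polynomial and $P_j(u^3)$ vanishes unless some factor has frequency $\gtrsim 2^{j-C}$, so one peels off the highest-frequency factor by H\"older directly. For $3 < p < 5$ with $p$ non-integer, $|u|^{p-1}$ has broad Fourier support and one must invoke a paraproduct / fractional-chain-rule trichotomy in the Besov framework; since $s_c \in (1/2, 1)$ and $F(u) = |u|^{p-1}u$ has $F'(u) = p|u|^{p-1}$ H\"older-continuous of positive order (as $p - 1 > 2$), Christ--Weinstein-style estimates apply. A priori finiteness of $X$ can be ensured by running the Picard iteration underlying the proof of $(\ref{2.1})$ directly in the Besov-Strichartz space $X$, or by a standard continuity argument in $T \in [0, 1]$.
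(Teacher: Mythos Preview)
Your overall architecture matches the paper's: work in a Besov-$\ell^1$-summed Strichartz norm, apply $P_j$ to Duhamel, and bootstrap using the smallness $\epsilon^{p-1}$. The gap is that the nonlinear estimate you write,
\[
\sum_j \|P_j(|u|^{p-1}u)\|_{L^{\tilde q'}_t L^{\tilde r'}_x} \lesssim \|u\|_{L^{2(p-1)}_{t,x}}^{p-1}\, X,
\]
is a $B^0_{\tilde r',1}$-type Moser estimate carrying no regularity weight, and it cannot be closed with only the three norms in your $X$. Christ--Weinstein chain rules need a strictly positive $2^{js}$ weight to generate off-diagonal decay in the paraproduct sums; at $s=0$ there is none. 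Concretely, with the split $F_1 = |P_{\leq j}u|^{p-1}P_{\leq j}u$ and $F_2 = O(|u|^{p-1}|P_{\geq j}u|)$, the only available bound via your H\"older pairing is $\|P_j F_2\|_{L^{\tilde q'}L^{\tilde r'}} \lesssim \epsilon^{p-1}\sum_{k\ge j}\|P_k u\|_{L^qL^r}$ and similarly $\|P_jF_1\|\lesssim \epsilon^{p-1}\sum_{k\le j}\|P_k u\|_{L^qL^r}$; summing either over $j$ produces a divergent double sum. Your appeal to $s_c\in(1/2,1)$ does not help because you have placed the nonlinearity in the zero-regularity dual space $L^{\tilde q'}L^{\tilde r'}$, not in a space seeing $s_c$ derivatives.

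The paper supplies exactly the missing mechanism: it enlarges $X_j$ with two auxiliary weighted Strichartz components, $2^{j(s_c-\frac14)}\|P_j u\|_{L^4_{t,x}}$ and $2^{-j(1-s_c)/2}\|P_j u\|_{L^{4q/(2+q)}_tL^{2r}_x}$, and estimates $F_1$ and $F_2$ in \emph{two different} dual Strichartz spaces carrying the matching weights. Because $s_c-\tfrac14>0$ and $\tfrac{1-s_c}{2}>0$ for $3<p<5$, this yields $\ell^1$ convolution kernels $2^{(j-k)(s_c-\frac14)}$ for $k\ge j$ and $2^{(k-j)(1-s_c)/2}$ for $k\le j$, so that $X_j \lesssim (\text{data})_j + \epsilon^{p-1}(K*X)_j$ with $K\in\ell^1(\mathbb Z)$, and Young's inequality closes the bootstrap. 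To repair your argument you must either add such weighted auxiliary norms or route the nonlinearity through a dual pair that carries a strictly positive Besov weight; the single pair $(\tilde q',\tilde r')$ together with your three-term $X$ is not enough.
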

\begin{proof}

Using the Strichartz estimates in Theorem $\ref{t2.1}$, if $(q, r)$ and $(\tilde{q}, \tilde{r})$ are given by $(\ref{2.7})$,
\begin{equation}\label{2.15}
\aligned
2^{j s_{c}} \| P_{j} u \|_{L_{t}^{\infty} L_{x}^{2}([-1,1] \times \mathbb{R}^{3})} + \| P_{j} u \|_{L_{t,x}^{2(p - 1)} \cap L_{t}^{q} L_{x}^{r}([-1,1] \times \mathbb{R}^{3})} + 2^{j(s_{c} - \frac{1}{4})} \| P_{j} u \|_{L_{t,x}^{4}([-1, 1] \times \mathbb{R}^{3})} \\ + 2^{-j(1 - s_{c})/2} \| P_{j} u \|_{L_{t}^{\frac{4q}{2 + q}} L_{x}^{2r}([-1,1] \times \mathbb{R}^{3})}
 \lesssim 2^{j s_{c}} \| P_{j} u_{0} \|_{L^{2}} + 2^{j(s_{c} - 1)} \| P_{j} u_{1} \|_{L^{2}} \\ + 2^{-j (1 - s_{c})/2} \| P_{j} F_{1} \|_{L_{t}^{\frac{4q}{3q + 2}} L_{x}^{\frac{2r}{r + 1}}([-1,1] \times \mathbb{R}^{3})} + 2^{j(s_{c} - \frac{1}{4})} \| P_{j} F_{2} \|_{L_{t}^{8/5} L_{x}^{8/7}([-1, 1] \times \mathbb{R}^{3})},
\endaligned
\end{equation}
where $P_{j} F_{1} + P_{j} F_{2} = P_{j}(|u|^{p - 1} u)$ is a decomposition of the nonlinearity. Using Taylor's theorem, decompose
\begin{equation}\label{2.16}
F_{1} = |P_{\leq j} u|^{p - 1} (P_{\leq j} u), \qquad F_{2} = |u|^{p - 1} u - |P_{\leq j} u|^{p - 1} (P_{\leq j} u) = O(|u|^{p - 1} |P_{\geq j} u|).
\end{equation}
Theorem $\ref{t2.2}$ follows directly from $(\ref{2.15})$ and $u_{0} \in B_{1,1}^{s_{c} + 3/2}$, $u_{1} \in B_{1,1}^{s_{c} + 1/2}$. Indeed,
\begin{equation}\label{2.17}
\| F_{1} \|_{L_{t}^{\frac{4q}{3q + 2}} L_{x}^{\frac{2r}{r + 1}}([-1,1] \times \mathbb{R}^{3})} \lesssim \| P_{\leq j} u \|_{L_{t,x}^{2(p - 1)}([-1,1] \times \mathbb{R}^{3})}^{p - 1} \| P_{\leq j} u \|_{L_{t}^{\frac{4q}{q + 2}} L_{x}^{2r}([-1, 1] \times \mathbb{R}^{3})},
\end{equation}
and
\begin{equation}\label{2.18}
\| F_{2} \|_{L_{t}^{8/5} L_{x}^{8/7}([-1,1] \times \mathbb{R}^{3})} \lesssim \| P_{\geq j} u \|_{L_{t,x}^{4}([-1,1] \times \mathbb{R}^{3})} \| u \|_{L_{t,x}^{2(p - 1)}([-1, 1] \times \mathbb{R}^{3})}^{p - 1},
\end{equation}
so by Young's inequality and $(\ref{2.15})$, the proof of Theorem $\ref{t2.2}$ is complete. Indeed, letting $X_{j}$ denote the left hand side of $(\ref{2.15})$,
\begin{equation}
X_{j} \lesssim 2^{j s_{c}} \| P_{j} u_{0} \|_{L^{2}} + 2^{j(s_{c} - 1)} \| P_{j} u_{1} \|_{L^{2}} + \epsilon^{p - 1} \sum_{k \geq j} 2^{(j - k)(s_{c} - \frac{1}{4})} X_{k} + \epsilon^{p - 1} \sum_{k \leq j} 2^{(k - j) \frac{(1 - s_{c})}{2}} X_{k},
\end{equation}
which implies $(\ref{2.14})$.
\end{proof}

The dispersive estimates $(\ref{1.9})$ also give additional $L_{t}^{q} L_{x}^{r}$ bounds on the solution $u$ in $[-1,1]$ that lie outside the admissible pairs in Theorem $\ref{t2.1}$.
\begin{theorem}\label{t2.3}
For $3 < p < 5$, if $\frac{1}{q} = \frac{3}{2} - s_{c} = \frac{2}{p - 1}$,
\begin{equation}\label{2.19}
\| u \|_{L_{t}^{q} L_{x}^{\infty}([-1,1] \times \mathbb{R}^{3})} + \sum_{j} 2^{j(s_{c} - \frac{1}{2})} \sup_{t \in [-1, 1]} t \| P_{j} u \|_{L^{\infty}} \lesssim \epsilon.
\end{equation}
\end{theorem}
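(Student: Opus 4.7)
The plan is to write $u = u_{L} + u_{NL}$, with $u_{L} = S(t)(u_{0}, u_{1})$ the linear evolution and $u_{NL}(t) = -\int_{0}^{t} S(t-\tau)(0, |u|^{p-1} u)\, d\tau$ the Duhamel term, and to estimate each piece block by block, using the three sources of smallness already in hand: the high-frequency Besov tail bound $(\ref{2.8})$, the rescaling identity $(\ref{2.9})$, and the $L_{t,x}^{2(p-1)}$-smallness $(\ref{2.13})$. For the linear piece, I combine the dispersive estimate $(\ref{1.9})$ with Bernstein to obtain, for every dyadic $j$,
\[
\| P_{j} u_{L}(t) \|_{L^{\infty}} \lesssim \min\bigl(2^{3j}, 2^{2j}/|t|\bigr) \|P_{j} u_{0}\|_{L^{1}} + \min\bigl(2^{2j}, 2^{j}/|t|\bigr) \|P_{j} u_{1}\|_{L^{1}}.
\]
Multiplying by $|t|$ yields $\sup_{t \in [-1,1]} |t|\,\|P_{j} u_{L}\|_{L^{\infty}} \lesssim 2^{2j}\|P_{j} u_{0}\|_{L^{1}} + 2^{j}\|P_{j} u_{1}\|_{L^{1}}$, so weighting by $2^{j(s_{c} - 1/2)}$ and summing over $j \geq j_{0}$ reproduces exactly the Besov tail in $(\ref{2.8})$, which is $\leq c\epsilon$. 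For $j < j_{0}$, I use instead the plain Bernstein bound $\|P_{j} u_{L}\|_{L_{t,x}^{\infty}} \lesssim 2^{3j}(\|P_{j} u_{0}\|_{L^{1}} + \|P_{j} u_{1}\|_{L^{1}})$, and the extra powers of $2^{j}$ that appear in the weighted sum are absorbed via $(\ref{2.9})$, producing a bound of the form $c\epsilon \cdot 2^{j_{0} s_{c}} \leq c\epsilon$ since $j_{0} < 0$. The $L_{t}^{q} L_{x}^{\infty}$-contribution of $u_{L}$ is controlled in the same way, now splitting each time integral at $|t| \sim 2^{-j}$ so that the $1/|t|$-singularity of the dispersive kernel has $L_{t}^{q}$-norm $\sim 2^{j(s_{c} - 1/2)}$, which exactly matches the target Besov weight.

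For the Duhamel piece, I apply the inhomogeneous energy estimate $\|P_{j} u_{NL}\|_{L_{t}^{\infty} L_{x}^{2}} \lesssim 2^{-j} \|P_{j}(|u|^{p-1} u)\|_{L_{t}^{1} L_{x}^{2}}$ followed by Bernstein $L^{2} \to L^{\infty}$ to obtain $\|P_{j} u_{NL}\|_{L_{t,x}^{\infty}} \lesssim 2^{j/2}\|P_{j}(|u|^{p-1}u)\|_{L_{t}^{1} L_{x}^{2}}$, so that
\[
\sum_{j} 2^{j(s_{c} - 1/2)}\sup_{t} |t|\,\|P_{j} u_{NL}\|_{L^{\infty}} \lesssim \sum_{j} 2^{j s_{c}}\|P_{j}(|u|^{p-1}u)\|_{L_{t}^{1} L_{x}^{2}}.
\]
The right-hand side is bounded using the trichotomy decomposition $(\ref{2.16})$ combined with H\"older in the spirit of $(\ref{2.17})$--$(\ref{2.18})$, producing a factor $\|u\|_{L_{t,x}^{2(p-1)}}^{p-1} \leq \epsilon^{p-1}$ multiplied by a convolution in frequency that closes by Young's inequality against the bounds on $u$ already obtained from $(\ref{2.15})$. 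The $L_{t}^{q} L_{x}^{\infty}$-norm of $u_{NL}$ is controlled in the same manner via a non-endpoint Strichartz pair together with Bernstein and time H\"older. A standard bootstrap then leaves the total Duhamel contribution $\lesssim \epsilon^{p} \leq \epsilon$.

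The main obstacle is the endpoint character of $(q, \infty)$: for $p < 5$ one has $1/q + 1/\infty = 2/(p-1) > 1/2$, so $(q, \infty)$ fails the wave-admissibility $(\ref{2.5})$ and Theorem $\ref{t2.1}$ cannot be invoked directly. The remedy is the frequency-by-frequency dispersive analysis with the dichotomy $|t| \lessgtr 2^{-j}$. A second delicate point is the low-frequency regime $j < j_{0}$, where $(\ref{2.8})$ provides no smallness; there the rescaling identity $(\ref{2.9})$ supplies the crucial factor $2^{j_{0}(1 - s_{c})} \leq c\epsilon/A$ that upgrades a bound by $A$ into a bound by $\epsilon$.
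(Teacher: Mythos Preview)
Your treatment of the linear piece $u_L = S(t)(u_0,u_1)$ is essentially the paper's: frequency-localized dispersive decay, the dichotomy $|t|\lessgtr 2^{-j}$, and the high/low split at $j_0$ via $(\ref{2.8})$ and $(\ref{2.9})$. That part is fine.

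The gap is in the Duhamel term. Your chain ``energy estimate $+$ Bernstein $+$ drop the $|t|$ weight'' reduces matters to bounding
\[
\sum_j 2^{j s_c}\,\bigl\|P_j(|u|^{p-1}u)\bigr\|_{L_t^1 L_x^2([-1,1])},
\]
and you assert this is controlled as in $(\ref{2.17})$--$(\ref{2.18})$ with a factor $\|u\|_{L_{t,x}^{2(p-1)}}^{p-1}$. But $(\ref{2.17})$--$(\ref{2.18})$ are written for the dual pairs $L_t^{4q/(3q+2)}L_x^{2r/(r+1)}$ and $L_t^{8/5}L_x^{8/7}$, not for $L_t^1 L_x^2$. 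If you try H\"older in $L_t^1 L_x^2$ using $\|u\|_{L_{t,x}^{2(p-1)}}^{p-1}$, the remaining factor of $u$ is forced into $L_t^2 L_x^\infty$, the forbidden endpoint you yourself flagged. The alternative placement $|u|^{p-1}\in L_t^1 L_x^\infty$, $P_{\ge j}u\in L_t^\infty L_x^2$ is no better: it requires $\|u\|_{L_t^{p-1}L_x^\infty}$, and since the natural pointwise scale is $\|u(t)\|_{L^\infty}\sim t^{-2/(p-1)}$, one gets $\int_0^1 \|u\|_{L^\infty}^{p-1}\,dt\sim\int_0^1 t^{-2}\,dt=\infty$. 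In fact Corollary~\ref{c2.4} shows $\int_{t/2}^t\|u^p\|_{L^2}\,d\tau\sim t^{-(1-s_c)}$, which is not $L_t^1$-summable over dyadic $t\searrow 0$; so the quantity you reduce to is generically infinite. Throwing away the $|t|$ weight is precisely what breaks the argument.

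The paper's proof keeps the weight by working with dispersion on the Duhamel integral itself: it uses the Kirchhoff representation $(\ref{2.24})$ to extract a factor $|t-\tau|^{-1}$, splits $\int_0^t=\int_0^{t/2}+\int_{t/2}^t$, bounds the first half via the surface-integral estimate $(\ref{2.27})$ and the second via $(\ref{2.35})$, and finally closes a bootstrap not on $\sum_j 2^{js_c}\|P_j F\|_{L_t^1L_x^2}$ but on the weighted quantities $\sup_t t^{3/2-s_c}\|u\|_{L^\infty}$ and $\|u\|_{L_t^{(p-1)/2}L_x^\infty}$, linked back to the target norm by the interpolation $(\ref{2.37.1})$. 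That interpolation step (which needs $p>3$) is the missing idea in your outline.
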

\begin{proof}
Using the dispersive estimate
\begin{equation}\label{2.20}
\| S(t)(u_{0}, u_{1}) \|_{L^{\infty}} \lesssim \frac{1}{t} \| (u_{0}, u_{1}) \|_{B_{1,1}^{2} \times B_{1,1}^{1}},
\end{equation}
for any $j \in \mathbb{Z}$,
\begin{equation}\label{2.21}
\| S(t)(P_{j} u_{0}, P_{j} u_{1}) \|_{L^{\infty}} \lesssim  \frac{1}{t} 2^{-j(s_{c} - \frac{1}{2})} [2^{j (\frac{3}{2} + s_{c})} \| P_{j} u_{0} \|_{L^{1}} + 2^{j(\frac{1}{2} + s_{c})} \| P_{j} u_{1} \|_{L^{1}}].
\end{equation}
Interpolating $(\ref{2.21})$ with
\begin{equation}\label{2.22}
\| S(t)(P_{j} u_{0}, P_{j} u_{1}) \|_{L^{\infty}} \lesssim 2^{j(\frac{3}{2} - s_{c})} \| (P_{j} u_{0}, P_{j} u_{1}) \|_{\dot{H}^{s_{c}} \times \dot{H}^{s_{c} - 1}},
\end{equation}
and making use of $(\ref{2.8})$ and $(\ref{2.10})$, we have proved
\begin{equation}\label{2.23}
\aligned
\sum_{j} \sup_{t \in [-1,1]} t^{\frac{3}{2} - s_{c}} \| S(t)(P_{j} u_{0}, P_{j} u_{1}) \|_{L^{\infty}} + \sum_{j} \| S(t) (P_{j} u_{0}, P_{j} u_{1}) \|_{L_{t}^{q} L_{x}^{\infty}(\mathbb{R} \times \mathbb{R}^{3})} \\ + \sum_{j} 2^{j(s_{c} - \frac{1}{2})} \sup_{t \in [-1,1]} t \| S(t)(P_{j} u_{0}, P_{j} u_{1}) \|_{L^{\infty}} \lesssim \epsilon.
\endaligned
\end{equation}

Turning to the second term in $(\ref{1.15})$ and using the formula for the solution to the linear wave equation in $\mathbb{R}^{3}$, see for example \cite{sogge1995lectures}, for any $x \in \mathbb{R}^{3}$,
\begin{equation}\label{2.24}
| S(t - \tau) (0, |u|^{p - 1} u)(x) | \lesssim \frac{1}{|t - \tau|} \int_{\partial B(x, t - \tau)} |u(y,\tau)|^{p} d\sigma(y).
\end{equation}
Once again, split
\begin{equation}\label{2.25}
P_{j}(|u|^{p - 1} u) = P_{j} F_{1} + P_{j} F_{2}, \qquad F_{1} = |P_{\leq j} u|^{p - 1} (P_{\leq j} u), \qquad F_{2} = O(|P_{\geq j} u| |u|^{p - 1}).
\end{equation}
Plugging $F_{2}$ into $(\ref{2.24})$, for any $t \in [-1,1]$, $x \in \mathbb{R}^{3}$,
\begin{equation}\label{2.26}
\aligned
| \int_{0}^{\frac{t}{2}} S(t - \tau)(0, P_{j} F_{2})(t,x) d\tau| \lesssim \frac{1}{t} \| |u|^{\frac{p - 1}{2}} \|_{L_{\tau}^{1} L_{x}^{\infty}([0, \frac{t}{2}] \times \mathbb{R}^{3})} \\ \cdot \sup_{\tau \in [0, \frac{t}{2}]} (\int_{\partial B(x, t - \tau)} |u(\tau, y)|^{p - 1} d\sigma(y))^{1/2} \cdot \sup_{\tau \in [0, \frac{t}{2}]} (\int_{\partial B(x, t - \tau)} |P_{\geq j} u(\tau, y)|^{2} dy)^{1/2}.
\endaligned
\end{equation}

By an argument similar to the Sobolev embedding theorem, for any $k \in \mathbb{Z}$,
\begin{equation}\label{2.27}
\int_{\partial B(x, t - \tau)} |P_{k} u(y, \tau)|^{p - 1} d\sigma(y) \lesssim 2^{k} \| P_{k} u \|_{L^{p - 1}}^{p - 1}.
\end{equation}
\begin{remark}
To see why this is so, recall that the Littlewood--Paley kernel for $P_{k}$ may be approximated by $2^{3k}$ multiplied by the characteristic function of a ball of radius $2^{-k}$. Then consider the cases when $2^{-k} \leq |t - \tau|$ and $2^{-k} > |t - \tau|$ separately. Indeed, for $|t - \tau| \lesssim 2^{-k}$, there exists some $C$ such that
\begin{equation}\label{2.28}
\int_{\partial B(x, t - \tau)} |P_{k} u(y, \tau)|^{p - 1} d\sigma(y) \lesssim 2^{3k} |t - \tau|^{2} \int_{B(x, C 2^{-k})} |P_{k} u(\tau, y)|^{p - 1} dy \lesssim 2^{k} \| P_{k} u(\tau) \|_{L^{p - 1}}^{p - 1}.
\end{equation}
Meanwhile, for $|t - \tau| \gg 2^{-k}$,
\begin{equation}\label{2.29}
\int_{\partial B(x, t - \tau)} |P_{k} u(y, \tau)|^{p - 1} d\sigma(y) \lesssim 2^{k}  \int_{dist(B(x, t - \tau), y) \leq 2^{-k}} |P_{k} u(\tau, y)|^{p - 1} dy \lesssim 2^{k} \| P_{k} u(\tau) \|_{L^{p - 1}}^{p - 1}.
\end{equation}
Now, then, since the Littlewood--Paley kernel obeys the bounds
\begin{equation}\label{2.30}
\mathcal F(P_{k}(y)) \lesssim_{N} 2^{3k} (1 + 2^{k} |y|)^{-N},
\end{equation}
for any $N$, calculations similar to $(\ref{2.28})$ and $(\ref{2.29})$ imply $(\ref{2.27})$.
\end{remark}

Plugging $(\ref{2.27})$ into $(\ref{2.26})$, by Young's inequality,
\begin{equation}\label{2.31}
\sum_{j} 2^{j(s_{c} - \frac{1}{2})} \sup_{t \in [-1,1]} t \| \int_{0}^{\frac{t}{2}} S(t - \tau)(0, P_{j}F_{2}) d\tau \|_{L^{\infty}} \lesssim \| u \|_{L_{t}^{\frac{p - 1}{2}} L_{x}^{\infty}([-1,1] \times \mathbb{R}^{3})}^{\frac{p - 1}{2}} A^{\frac{p + 1}{2}}.
\end{equation}
Meanwhile, since by Bernstein's inequality,
\begin{equation}\label{2.32}
P_{j}(F_{1}) \sim 2^{-j} \nabla P_{j} F_{1} \sim 2^{-j} |P_{\leq j} u|^{p - 1} |\nabla P_{\leq j} u|,
\end{equation}
\begin{equation}\label{2.33}
\aligned
| \int_{0}^{\frac{t}{2}} S(t - \tau)(0, P_{j} F_{1})(t,x) d\tau| \lesssim \frac{2^{-j}}{t} \| |u|^{\frac{p - 1}{2}} \|_{L_{\tau}^{1} L_{x}^{\infty}([0, \frac{t}{2}] \times \mathbb{R}^{3})} \\ \cdot \sup_{\tau \in [0, \frac{t}{2}]} (\int_{B(x, t - \tau)} |u(\tau, y)|^{p - 1} d\sigma(y))^{1/2} \cdot \sup_{\tau \in [0, \frac{t}{2}]} (\int_{B(x, t - \tau)} |\nabla P_{\leq j} u(\tau, y)|^{2} dy)^{1/2},
\endaligned
\end{equation}
and therefore,
\begin{equation}\label{2.34}
\sum_{j} 2^{j(s_{c} - \frac{1}{2})} \sup_{t \in [-1,1]} t \| \int_{0}^{\frac{t}{2}} S(t - \tau)(0, P_{j} |u|^{p - 1} u) d\tau \|_{L^{\infty}} \lesssim \| u \|_{L_{t}^{\frac{p - 1}{2}} L_{x}^{\infty}([-1,1] \times \mathbb{R}^{3})}^{\frac{p - 1}{2}} A^{\frac{p + 1}{2}}.
\end{equation}

For $\tau \in [\frac{t}{2}, t]$, energy estimates and the Sobolev embedding theorem imply,
\begin{equation}\label{2.35}
\aligned
\| S(t - \tau) (0, P_{j}(|u|^{p - 1} u)) \|_{L^{6}} \lesssim \frac{1}{t^{2}} (\sup_{\tau \in [\frac{t}{2}, t]} \tau \cdot \| |u(\tau)|^{\frac{p - 1}{2}} \|_{L_{x}^{\infty}(\mathbb{R}^{3})})^{2} \sup_{\tau \in [\frac{t}{2}, t]} \|P_{\geq j} u \|_{L^{2}} \\
+ \frac{2^{-j}}{t^{2}} (\sup_{\tau \in [\frac{t}{2}, t]} \tau \cdot \| |u(\tau)|^{\frac{p - 1}{2}} \|_{L_{x}^{\infty}(\mathbb{R}^{3})})^{2}  \| P_{\leq j} \nabla u \|_{L^{2}}.
\endaligned
\end{equation}
Therefore, by Young's inequality, the Sobolev embedding theorem, and Theorem $\ref{t2.2}$,
\begin{equation}\label{2.36}
\sum_{j} 2^{j(s_{p} - 1/2)} \sup_{t \in [-1,1]} t \| \int_{t/2}^{t} S(t - \tau) P_{j}(0, |u|^{p - 1} u) d\tau \|_{L^{\infty}} \lesssim (\sup_{t \in [-1,1]} t^{\frac{3}{2} - s_{c}} \| u(t) \|_{L^{\infty}})^{p - 1} A.
\end{equation}

Combining $(\ref{2.21})$, $(\ref{2.34})$, and $(\ref{2.36})$,
\begin{equation}\label{2.37}
\sum_{j} 2^{j(s_{p} - \frac{1}{2})} \sup_{t \in [-1,1]} t \| P_{j} u(t) \|_{L^{\infty}} \lesssim \epsilon + \| u \|_{L_{t}^{\frac{p - 1}{2}} L_{x}^{\infty}([-1,1] \times \mathbb{R}^{3})}^{\frac{p - 1}{2}} A^{\frac{p + 1}{2}} + (\sup_{t \in [-1,1]} t^{\frac{3}{2} - s_{c}} \| u(t) \|_{L^{\infty}})^{p - 1} A.
\end{equation}
Now then, for any $3 < p < 5$, Theorem $\ref{t2.2}$, $(\ref{2.10})$, $(\ref{2.11})$, and the Sobolev embedding theorem imply
\begin{equation}\label{2.37.1}
\sum_{j} \| P_{j} u \|_{L_{t}^{\frac{p - 1}{2}} L_{x}^{\infty}([-1, 1] \times \mathbb{R}^{3})} + \sum_{j} \sup_{t \in [-1,1]} t^{\frac{3}{2} - s_{c}} \| P_{j} u \|_{L_{x}^{\infty}(\mathbb{R}^{3})} \lesssim \epsilon^{\frac{p - 3}{p - 1}} (\sum_{j} 2^{j(s_{p} - \frac{1}{2})} \sup_{t \in [-1,1]} t \| P_{j} u(t) \|_{L^{\infty}})^{\frac{2}{p - 1}}.
\end{equation}
Combining $(\ref{2.37})$ with $(\ref{2.37.1})$ proves the Theorem.
\end{proof}

Theorem $\ref{t2.3}$ implies finite energy for a piece of the Duhamel term.
\begin{corollary}\label{c2.4}
For any $t \in [-1,1]$,
\begin{equation}\label{2.38}
\int_{t/2}^{t} \| u^{p}(\tau) \|_{L^{2}} d\tau \lesssim \frac{A^{p}}{t^{1 - s_{p}}}.
\end{equation}
\end{corollary}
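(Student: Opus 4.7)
The strategy is to rewrite $\|u^p(\tau)\|_{L^2}=\|u(\tau)\|_{L^{2p}}^p$ and interpolate the $L^{2p}$ norm between a time-uniform Sobolev bound in $L^{3(p-1)/2}$ and a pointwise $L^\infty$ decay bound, both of which are already available from Theorems $\ref{t2.2}$ and $\ref{t2.3}$.

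First I would extract two pointwise-in-$\tau$ estimates valid on $(0,1]$. By Bernstein's inequality, $\|P_j u(\tau)\|_{L^{3(p-1)/2}} \lesssim 2^{js_c}\|P_j u(\tau)\|_{L^2}$, so summing in $j$ and invoking Theorem $\ref{t2.2}$ gives $\|u(\tau)\|_{L^{3(p-1)/2}} \lesssim A$ uniformly. For the $L^\infty$ side, the inequality $\sum_j \sup_{\tau \in [-1,1]}\tau^{3/2-s_c}\|P_j u(\tau)\|_{L^\infty} \lesssim \epsilon$, which is exactly $(\ref{2.37.1})$ obtained en route to proving Theorem $\ref{t2.3}$, yields $\|u(\tau)\|_{L^\infty} \lesssim \tau^{s_c-3/2}$ pointwise in $\tau$.

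Next I would interpolate with exponent $\theta = (p+3)/(4p)$, chosen so that $\tfrac{1}{2p} = \tfrac{\theta}{\infty} + \tfrac{1-\theta}{3(p-1)/2}$, to obtain
\[
\|u(\tau)\|_{L^{2p}}^p \;\leq\; \|u(\tau)\|_{L^\infty}^{(p+3)/4}\,\|u(\tau)\|_{L^{3(p-1)/2}}^{3(p-1)/4} \;\lesssim\; A^p\,\tau^{(s_c - 3/2)(p+3)/4},
\]
where the factor $\epsilon^{(p+3)/4}A^{3(p-1)/4}$ produced by interpolation is absorbed into $A^p$ (using $\epsilon \leq A$, or into the implicit constant). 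A direct computation from $s_c - 3/2 = -2/(p-1)$ gives $(s_c - 3/2)(p+3)/4 + 1 = (p-5)/(2(p-1)) = s_c - 1$, and this time exponent is strictly less than $-1$ for $3 < p < 5$. Integrating $\tau^a$ with $a = (s_c-3/2)(p+3)/4$ over $[t/2, t]$ therefore produces a quantity comparable to $t^{a+1} = t^{s_c - 1}$, which yields the claimed bound $A^p/t^{1-s_c}$.

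There is no real obstacle: the substantive content — a pointwise-in-$\tau$ dispersive decay of $\|u(\tau)\|_{L^\infty}$ for the full nonlinear solution on $[-1,1]$ — was the achievement of Theorem $\ref{t2.3}$. The corollary itself reduces to Bernstein's inequality, a single interpolation, and an elementary integral; the only delicate point is recognizing that the particular Sobolev exponent $3(p-1)/2$ pairs with the dispersive decay $\tau^{s_c-3/2}$ to produce exactly the advertised time weight $t^{s_c - 1}$.
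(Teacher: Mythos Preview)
Your proof is correct and follows the same route the paper has in mind. The paper's one-line proof (``Use the energy estimate in $(\ref{2.35})$'') points to exactly the two ingredients you invoke: the pointwise decay $\tau^{3/2-s_c}\|u(\tau)\|_{L^\infty}\lesssim\epsilon$ established in the proof of Theorem~\ref{t2.3}, and the frequency-summed $L^2$ control from Theorem~\ref{t2.2}. Your interpolation between $L^\infty$ and $L^{3(p-1)/2}$ is a clean way to package these, and the arithmetic checking that the resulting time exponent is $s_c-1$ is correct.
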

\begin{proof}
Use the energy estimate in $(\ref{2.35})$.
\end{proof}

\section{Proof of Global well-posedness}
By time reversal symmetry and local well-posedness on the interval $[-1,1]$, to prove Theorem $\ref{t1.6}$, it suffices to prove global well-posedness in the positive time direction, $t > 1$ for $(\ref{1.1})$ with initial data $(u(1,x), u_{t}(1,x))$. The local well-posedness arguments used to prove Theorem $\ref{t1.1}$ imply that $(\ref{1.1})$ has a solution on some open interval $[0, T)$ for some $T > 1$, so to prove global well-posedness it suffices to show that $T$ can be taken to go to infinity.

Split
\begin{equation}\label{3.1}
\begin{pmatrix} u(1,x) \\ u_{t}(1,x) \end{pmatrix} = S(1) (u_{0}, u_{1}) + \int_{0}^{1/2} S(1 - \tau)(0, |u|^{p - 1} u) d\tau + \int_{1/2}^{1} S(1 - \tau)(0, |u|^{p - 1} u) d\tau.
\end{equation}
By Corollary $\ref{c2.4}$, the second Duhamel term has finite energy.
\begin{equation}\label{3.2}
\| \begin{pmatrix} v(1, x) \\ v_{t}(1, x) \end{pmatrix} \|_{\dot{H}^{1} \times L^{2}} = \| \int_{1/2}^{1} S(1 - \tau)(0, |u|^{p - 1} u) d\tau \|_{\dot{H}^{1} \times L^{2}}  \lesssim_{A} 1.
\end{equation}

Now let $u$ be the solution to $(\ref{1.1})$ on $[1, T)$. Split $u = v + w$, where $v$ solves
\begin{equation}\label{3.3}
v_{tt} - \Delta v + |u|^{p - 1} u = 0,
\end{equation}
on $[1, T)$ with initial data given by $(\ref{3.2})$, and
\begin{equation}\label{3.4}
w_{tt} - \Delta w = 0, \qquad w(1,x) = u(1,x) - v(1,x), \qquad w_{t}(1,x) = u_{t}(1,x) - v_{t}(1,x).
\end{equation}
Set
\begin{equation}\label{3.4.1}
E(v) = \int [\frac{1}{2} |v_{t}|^{2} + \frac{1}{2} |\nabla v|^{2} + \frac{1}{p + 1} |v|^{p + 1}] dx,
\end{equation}
and compute
\begin{equation}\label{3.5}
\frac{d}{dt} E(v) = \langle v_{t}, -|u|^{p - 1} u + |v|^{p - 1} v \rangle.
\end{equation}

By Taylor's theorem,
\begin{equation}\label{3.6}
|u|^{p - 1} u - |v|^{p - 1} v = p |v|^{p - 1} w + O(|w|^{2} |v|^{p - 2}) + O(|w|^{p}).
\end{equation}
By H{\"o}lder's inequality,
\begin{equation}\label{3.7}
\langle O(|w|^{2} |v|^{p - 2}), v_{t} \rangle \lesssim \| v_{t} \|_{L^{2}} \| v \|_{L^{p + 1}}^{p - 2} \| w \|_{L^{\infty}}^{\frac{p - 1}{2}} \| w \|_{L^{p + 1}}^{\frac{5 - p}{2}} \lesssim E(v(t))^{\frac{1}{2} + \frac{p - 2}{p + 1}} \| w \|_{L^{\infty}}^{\frac{p - 1}{2}} \| w \|_{L^{p + 1}}^{\frac{5 - p}{2}}.
\end{equation}
Interpolating $(\ref{2.23})$ with $\| w \|_{L^{\frac{3(p - 1)}{2}}} \lesssim \| w \|_{\dot{H}^{s_{c}}} \lesssim_{A} 1$, proves $\| w \|_{L^{p + 1}} \lesssim_{A} 1$. Also,
\begin{equation}\label{3.8}
\langle |w|^{p}, v_{t} \rangle \lesssim \| v_{t} \|_{L^{2}} \| w \|_{L^{\infty}}^{\frac{p - 1}{2}} \| w \|_{L^{p + 1}}^{\frac{p + 1}{2}} \lesssim E(v(t))^{1/2} \| w \|_{L^{\infty}}^{\frac{p - 1}{2}} \| w \|_{L^{p + 1}}^{\frac{p + 1}{2}}.
\end{equation}
If we could ignore the term
\begin{equation}\label{3.9}
\langle v_{t}, p |v|^{p - 1} w \rangle,
\end{equation}
then $E(v(t))$ would be uniformly bounded on $\mathbb{R}$ by Gronwall's inequality. Indeed, by $(\ref{2.19})$,
\begin{equation}\label{3.9.1}
\aligned
\int_{1}^{T} E(v(t))^{\frac{1}{2} + \frac{p - 2}{p + 1}} \| w \|_{L^{\infty}}^{\frac{p - 1}{2}} \| w \|_{L^{p + 1}}^{\frac{5 - p}{2}} dt + \int E(v(t))^{1/2} \| w \|_{L^{\infty}}^{\frac{p - 1}{2}} \| w \|_{L^{p + 1}}^{\frac{p + 1}{2}} dt \\ \lesssim \sup_{t \in [1, T)} \epsilon E(v(t))^{\frac{1}{2} + \frac{p - 2}{p + 1}} +\sup_{t \in [1, T)} \epsilon E(v(t))^{1/2},
\endaligned
\end{equation}
which implies a uniform bound on $E(v(t))$.

To deal with the contribution of $(\ref{3.9})$, take the modified energy
\begin{equation}\label{3.10}
\mathcal E(v(t)) = E(v(t)) + \langle |v|^{p - 1} v, w \rangle.
\end{equation}
Then $(\ref{3.5})$ and $(\ref{3.6})$ imply
\begin{equation}\label{3.11}
\aligned
\frac{d}{dt} \mathcal E(v(t)) = \langle v_{t}, -|u|^{p - 1} u + |v|^{p - 1} v \rangle + \langle p |v|^{p - 1} w, v_{t} \rangle + \langle |v|^{p - 1} v, w_{t} \rangle \\
= \langle |v|^{p - 1} v, w_{t} \rangle + O(E(v(t))^{\frac{1}{2} + \frac{p - 2}{p + 1}} \| w \|_{L^{\infty}}^{\frac{2}{p - 1}} \| w \|_{L^{p + 1}}^{\frac{5 - p}{2}}) + O(E(v(t))^{1/2} \| w \|_{L^{\infty}}^{\frac{p - 1}{2}} \| w \|_{L^{p + 1}}^{\frac{p + 1}{2}}).
\endaligned
\end{equation}
Also,
\begin{equation}\label{3.12}
\langle |v|^{p - 1} v, w \rangle \lesssim \| v \|_{L^{p + 1}}^{p} \| w \|_{L^{p + 1}} \lesssim E(v(t))^{\frac{p}{p + 1}},
\end{equation}
so when $E(v(t))$ is large,
\begin{equation}\label{3.13}
E(v(t)) \sim \mathcal E(v(t)),
\end{equation}
and
\begin{equation}\label{3.13.1}
\frac{d}{dt} \mathcal E(v(t)) = \langle |v|^{p - 1} v, w_{t} \rangle + O(\mathcal E(v(t))^{\frac{1}{2} + \frac{p - 2}{p + 1}} \| w \|_{L^{\infty}}^{\frac{2}{p - 1}} \| w \|_{L^{p + 1}}^{\frac{5 - p}{2}}) + O(\mathcal E(v(t))^{1/2} \| w \|_{L^{\infty}}^{\frac{p - 1}{2}} \| w \|_{L^{p + 1}}^{\frac{p + 1}{2}}).
\end{equation}

Splitting $w_{t} = \sum_{j} P_{j} w_{t}$,
\begin{equation}\label{3.14}
\langle |v|^{p - 1} v, w_{t} \rangle = \sum_{j} \langle P_{j}(|v|^{p - 1} v), P_{j} w_{t} \rangle.
\end{equation}
Now by Bernstein's inequality and $(\ref{2.19})$,
\begin{equation}\label{3.15}
\sum_{j} \langle P_{j}(|v|^{p - 1} v - |P_{\leq j} v|^{p - 1} (P_{\leq j} v)), P_{j} w_{t} \rangle \lesssim \sum_{j} \| P_{j} w_{t} \|_{L^{\infty}} \| P_{\geq j} v \|_{L^{\frac{p + 1}{2}}} \| v \|_{L^{p + 1}}^{p - 1} \lesssim \frac{\epsilon}{t} E(v(t)).
\end{equation}

Indeed, by $(\ref{2.19})$,
\begin{equation}\label{3.15.1}
\sum_{j} 2^{j(s_{c} - \frac{1}{2})} 2^{-j} \| P_{j} w_{t} \|_{L^{\infty}} \lesssim \frac{\epsilon}{t}.
\end{equation}
Meanwhile, by Bernstein's inequality, for any fixed $j \in \mathbb{Z}$,
\begin{equation}\label{3.15.2}
\aligned
\frac{\epsilon}{t} 2^{j} 2^{-j(s_{c} - \frac{1}{2})} \| P_{\geq j} v \|_{L^{\frac{p + 1}{2}}} \| v \|_{L^{p + 1}}^{p - 1} \lesssim \frac{\epsilon}{t} 2^{j(\frac{3}{2} - s_{c})} \| P_{\geq j} v \|_{L^{2}}^{\frac{2}{p - 1}} \| v \|_{L^{p + 1}}^{\frac{p - 3}{p - 1} + p - 1} \lesssim \frac{\epsilon}{t} \| \nabla v \|_{L^{2}}^{\frac{2}{p - 1}} \| v \|_{L^{p + 1}}^{\frac{(p - 2)(p + 1)}{p - 1}} \lesssim \frac{\epsilon}{t} E(v(t)).
\endaligned
\end{equation}
Also, by Bernstein's inequality,
\begin{equation}\label{3.16}
\sum_{j} \langle P_{j}(|P_{\leq j} v|^{p - 1} (P_{\leq j} v), P_{j} w_{t} \rangle \lesssim \sum_{j} 2^{-j(\frac{2}{p - 1})} \| \nabla v \|_{L^{2}}^{\frac{2}{p - 1}} \| P_{j} w_{t} \|_{L^{\infty}} \| v \|_{L^{p + 1}}^{(p + 1) \frac{p - 2}{p - 1}} \lesssim \frac{\epsilon}{t} E(v(t)).
\end{equation}
Therefore, by Gronwall's inequality,
\begin{equation}\label{3.17}
\mathcal E(v(t)) < \infty, \qquad \text{and} \qquad E(v(t)) < \infty,
\end{equation}
for any $t \in \mathbb{R}$. This proves global well-posedness.

\section{Proof of scattering}
Now we prove that the global solution in the previous section scatters. By time reversal symmetry, to prove
\begin{equation}\label{4.1}
\| u \|_{L_{t,x}^{2(p - 1)}(\mathbb{R} \times \mathbb{R}^{3})} < \infty,
\end{equation}
if $u$ is a solution to $(\ref{1.1})$ with initial data $(u_{0}, u_{1}) \in B_{1,1}^{\frac{3}{2} + s_{c}} \times B_{1,1}^{\frac{1}{2} + s_{c}}$, it is enough to prove that
\begin{equation}\label{4.2}
\| u \|_{L_{t,x}^{2(p - 1)}([0, \infty) \times \mathbb{R}^{3})} < \infty.
\end{equation}
Recall from $(\ref{2.8})$ that
\begin{equation}\label{4.3}
\| P_{\geq j_{0}} u_{0} \|_{\dot{H}^{s_{c}}(\mathbb{R}^{3})} + \| P_{\geq j_{0}} u_{1} \|_{\dot{H}^{s_{c} - 1}(\mathbb{R}^{3})} \lesssim \epsilon.
\end{equation}
Also, let $\chi \in C_{0}^{\infty}(\mathbb{R}^{3})$ be a smooth, compactly supported function, and suppose $\chi(x) = 1$ for $|x| \leq 1$ and $\chi(x)$ is supported on $|x| \leq 2$. By the dominated convergence theorem there exists $100 \leq R(u_{0}, u_{1}, \epsilon) < \infty$ such that
\begin{equation}\label{4.4}
\| (1 - \chi(\frac{x}{R})) P_{\leq j_{0}} u_{0} \|_{\dot{H}^{s_{c}}(\mathbb{R}^{3})} + \| (1 - \chi(\frac{x}{R})) P_{\leq j_{0}} u_{1} \|_{\dot{H}^{s_{c} - 1}(\mathbb{R}^{3})} \lesssim \epsilon.
\end{equation}
Rescaling using $(\ref{1.2})$ and translating the initial data in time, $(\ref{4.2})$ is equivalent to proving
\begin{equation}\label{4.5}
\| u \|_{L_{t,x}^{2(p - 1)} [1 - \frac{1}{4R}, \infty) \times \mathbb{R}^{3}} < \infty,
\end{equation}
where
\begin{equation}\label{4.6}
u(1 - \frac{1}{4R}, x) = (4R)^{\frac{2}{p - 1}} u_{0}(4Rx), \qquad u_{t}(1 - \frac{1}{4R}, x) = (4R)^{1 + \frac{2}{p - 1}} u_{1}(4Rx).
\end{equation}
Then decompose
\begin{equation}\label{4.7}
\aligned
v(1 - \frac{1}{4R}, x) = \chi(\frac{x}{4}) (4R)^{\frac{2}{p - 1}} ( P_{\leq j_{0}} u_{0})(4Rx), \qquad v_{t}(1 - \frac{1}{4R}, x) = \chi(\frac{x}{4})(4R)^{\frac{2}{p - 1}} ( P_{\leq j_{0}} u_{1})(4Rx), \\
u(1 - \frac{1}{4R}, x) = v(1 - \frac{1}{4R}, x) + w(1 - \frac{1}{4R}, x), \qquad u_{t}(1 - \frac{1}{4R}, x) = v_{t}(1 - \frac{1}{4R}, x) + w_{t}(1 - \frac{1}{4R}, x),
\endaligned
\end{equation}
and let $v$ and $w$ solve the system of equations:
\begin{equation}\label{4.8}
\aligned
v_{tt} - \Delta v &= 0, \qquad \text{for} \qquad 1 - \frac{1}{4R} \leq t \leq 1, \\ v_{tt} - \Delta v + (|v + w|^{p - 1}(v + w) - |w|^{p - 1} w) &= 0, \qquad \text{for} \qquad t \geq 1,
\endaligned
\end{equation}
and
\begin{equation}\label{4.9}
\aligned
w_{tt} - \Delta w + |w + v|^{p - 1} (w + v) &= 0, \qquad \text{for} \qquad 1 - \frac{1}{4R} \leq t \leq 1, \\ w_{tt} - \Delta w + |w|^{p - 1} w &= 0, \qquad \text{for} \qquad t \geq 1.
\endaligned
\end{equation}
Using the small data arguments in \cite{lindblad1995existence} combined with $(\ref{2.13})$, $(\ref{4.3})$, and $(\ref{4.4})$,
\begin{equation}\label{4.10}
\| w \|_{L_{t,x}^{2(p - 1)}(\mathbb{R} \times \mathbb{R}^{3})} \lesssim \epsilon.
\end{equation}
Therefore, $(\ref{4.5})$ is equivalent to
\begin{equation}\label{4.11}
\| v \|_{L_{t,x}^{2(p - 1)}([1 - \frac{1}{4R}, \infty) \times \mathbb{R}^{3})} < \infty.
\end{equation}

The proof of $(\ref{4.11})$ will make use of some additional estimates on $w$.
\begin{lemma}\label{l4.1}
There exists a sequence $a_{j} \in l^{1}(\mathbb{Z})$ such that
\begin{equation}\label{4.12}
w(t,x) = \sum_{j \in \mathbb{Z}} w_{j}(t, x), \qquad \text{for any} \qquad t \in [1, \infty), \qquad x \in \mathbb{R}^{3},
\end{equation}
\begin{equation}\label{4.13}
|w_{j}(t,x)| \leq a_{j} 2^{-j \frac{p - 3}{p - 1}} (t - 1 + \frac{1}{4R})^{-1}, \qquad \text{for any} \qquad t \in [1, \infty), \qquad x \in \mathbb{R}^{3},
\end{equation}
\begin{equation}\label{4.14}
|\nabla w_{j}(t,x)| + |\partial_{t} w_{j}(t,x)| \leq a_{j} 2^{j \frac{2}{p - 1}} (t - 1 + \frac{1}{4R})^{-1}, \qquad \text{for any} \qquad t \in [1, \infty), \qquad x \in \mathbb{R}^{3},
\end{equation}
and
\begin{equation}\label{4.15}
|w_{j}(t,x)|  \leq a_{j} 2^{j \frac{2}{p - 1}}, \qquad \text{for any} \qquad t \in [1, \infty), \qquad x \in \mathbb{R}^{3},
\end{equation}
where
\begin{equation}\label{4.16}
\sum_{j} a_{j} \lesssim \epsilon.
\end{equation}
\end{lemma}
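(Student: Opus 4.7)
My plan is to set $w_j := P_j w$ and derive (4.13)--(4.15) from the dispersive estimate (1.9) applied to each Littlewood--Paley projection, combined with a small-data perturbative treatment of the nonlinear correction. Since $P_j$ commutes with the linear wave propagator and, on $[1,\infty)$, $w$ satisfies $w_{tt} - \Delta w + |w|^{p-1}w = 0$ with $\|w\|_{L^{2(p-1)}_{t,x}(\mathbb{R}\times\mathbb{R}^3)} \lesssim \epsilon$ by (4.10), Duhamel gives
\[
P_j w(t) = S(t-1)\bigl(P_j w(1), P_j w_t(1)\bigr) - \int_1^t S(t-s)\bigl(0, P_j(|w|^{p-1}w)\bigr)\, ds.
\]
Applying (1.9) to the linear term yields $\|S(t-1)(P_j w(1), P_j w_t(1))\|_{L^\infty} \lesssim \tau^{-1}(2^{2j}\|P_j w(1)\|_{L^1} + 2^j\|P_j w_t(1)\|_{L^1})$ with $\tau := t - 1 + \tfrac{1}{4R}$. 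Setting
\[
a_j := C\bigl(2^{j(s_c+3/2)}\|P_j w(1)\|_{L^1} + 2^{j(s_c+1/2)}\|P_j w_t(1)\|_{L^1}\bigr),
\]
plus a nonlinear correction absorbed below, this matches the decay $a_j\,2^{-j(s_c-1/2)}\tau^{-1}$ required in (4.13), since $s_c - \tfrac12 = \tfrac{p-3}{p-1}$. For (4.14) the same argument applied to $\nabla w_j$ and $\partial_t w_j$ gains a single factor of $2^j$, producing the exponent $\tfrac{2}{p-1} = \tfrac{3}{2} - s_c$. For (4.15), Bernstein's inequality twice gives $\|P_j w(t)\|_{L^\infty} \lesssim 2^{3j/2}\|P_j w(t)\|_{L^2} \lesssim 2^{3j}\|P_j w(1)\|_{L^1}$, together with approximate conservation of $\|P_j w(t)\|_{L^2}$ in the small-data regime.

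The nonlinear Duhamel term $\int_1^t S(t-s)(0, P_j(|w|^{p-1}w))\,ds$ is handled by mimicking the proof of Theorem \ref{t2.3}: split $P_j(|w|^{p-1}w) = P_jF_1 + P_jF_2$ with $F_1 = |P_{\leq j}w|^{p-1}(P_{\leq j}w)$ and $F_2 = O(|P_{\geq j}w||w|^{p-1})$ as in (2.25), use Kirchhoff's formula (2.24) separately on $[1, t/2]$ and $[t/2, t]$, and exploit the smallness $\|w\|_{L^{2(p-1)}_{t,x}} \lesssim \epsilon$ to close a bootstrap inequality of the shape (2.37)--(2.37.1). The resulting contribution is an $\epsilon$-fraction of the linear term and is absorbed when $\epsilon$ is small. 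What remains is to show $\sum_j a_j \lesssim \epsilon$.

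For this key step I would estimate the initial data $(w(1-\tfrac{1}{4R}), w_t(1-\tfrac{1}{4R}))$ specified by (4.7) in $B^{s_c+3/2}_{1,1}\times B^{s_c+1/2}_{1,1}$ by $O(\epsilon)$, then propagate through the short interval $[1-\tfrac{1}{4R}, 1]$. The high-frequency piece $(4R)^{2/(p-1)}(P_{\geq j_0}u_0)(4R\,\cdot)$ has Besov norm $\lesssim\epsilon$ directly from (2.8) together with the scale-invariance of $B^{s_c+3/2}_{1,1}\times B^{s_c+1/2}_{1,1}$ under (1.2). The low-frequency cutoff piece $(1-\chi(x/4))(4R)^{2/(p-1)}(P_{\leq j_0}u_0)(4R\,\cdot)$ is essentially frequency-localized below the scale $4R\cdot 2^{j_0}$, so Littlewood--Paley projections at much higher frequency are controlled by the Schwartz decay of $\widehat{\chi}$, while lower projections are converted from the $\dot{H}^{s_c}$ bound of (4.4) (suitably rescaled by (1.2)) to a $B^{s_c+3/2}_{1,1}$ bound via Bernstein's inequality. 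Propagation through $[1-\tfrac{1}{4R}, 1]$, where $w$ satisfies $w_{tt} - \Delta w + |u|^{p-1}u = 0$, is then handled by a Strichartz--Besov argument analogous to Theorem \ref{t2.2}, absorbing the forcing term via the bound $\|u\|_{L^{2(p-1)}_{t,x}([-1,1]\times\mathbb{R}^3)}\leq\epsilon$. The main obstacle is precisely this last chain of estimates: multiplication by $(1-\chi(x/4))$ does not preserve the Littlewood--Paley decomposition, so careful frequency accounting is required to upgrade the $\dot{H}^{s_c}$ bound from (4.4) to the Besov bound that the dispersive machinery demands.
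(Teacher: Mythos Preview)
Your overall architecture is right and matches the paper: set $w_j = P_j w$, get the $t^{-1}$ decay from the dispersive estimate applied to the linear evolution of each Littlewood--Paley piece, and close the nonlinear Duhamel contribution by the same bootstrap as in Theorem~\ref{t2.3}. The place where your argument breaks is exactly the step you flag as the ``main obstacle,'' and the resolution you propose does not work. You write that the low-frequency cutoff piece can have its lower Littlewood--Paley projections ``converted from the $\dot H^{s_c}$ bound of (4.4) \dots to a $B^{s_c+3/2}_{1,1}$ bound via Bernstein's inequality.'' Bernstein goes the wrong way here: for a function localized at frequency $2^k$ one has $\|P_k f\|_{L^2}\lesssim 2^{3k/2}\|P_k f\|_{L^1}$, not the reverse, so there is no route from $L^2$-based smallness to the $L^1$-based smallness that (1.9) requires. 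A related difficulty is your plan to first place $(w(1),w_t(1))$ in $B^{s_c+3/2}_{1,1}\times B^{s_c+1/2}_{1,1}$ and then run dispersion from $t=1$: the Duhamel term $\int_{1-1/(4R)}^{1}S(1-\tau)(0,|u|^{p-1}u)\,d\tau$ has no obvious $L^1$-Besov control, so that intermediate step is not available.

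The paper avoids both issues. For the low-frequency cutoff piece it never tries to pass through (4.4); instead it applies dominated convergence directly to the $L^1$ norms coming from the original Besov hypothesis, obtaining for $R$ large
\[
\|(1-\chi(x/R))P_j u_0\|_{L^1}\le a_j\,2^{-j(3/2+s_c)},\qquad \sum_j a_j\lesssim\epsilon,
\]
together with the analogous bound on three derivatives (equations (4.19)--(4.20)). This gives the $L^1$ input needed for (1.9) without ever touching $\dot H^{s_c}$. For the short-time Duhamel piece the paper does not try to place it in $L^1$-Besov at time $1$; it rescales so the initial time is $0$ and invokes Theorem~\ref{t2.3} to get the required $t^{-1}$ decay in $L^\infty$ directly for $\int_0^1 S(t-\tau)(0,|u|^{p-1}u)\,d\tau$. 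If you replace your Bernstein step by the dominated-convergence argument on $L^1$ and run dispersion from the true initial time rather than from $t=1$, the rest of your outline goes through as written.
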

\begin{proof}
Using the scaling symmetry in $(\ref{1.2})$, Lemma $\ref{l4.1}$ is equivalent to proving the bounds in $(\ref{4.13})$--$(\ref{4.15})$ with $t - 1 + \frac{1}{4R}$ replaced by $t$, for $w$ solving
\begin{equation}\label{4.17}
\aligned
w_{tt} - \Delta w + |u|^{p - 1} u = 0, \qquad \text{for} \qquad 0 \leq t \leq 1, \\
w_{tt} - \Delta w + |w|^{p - 1} w = 0, \qquad \text{for} \qquad t \geq 1,
\endaligned
\end{equation}
with initial data satisfying $(\ref{2.9})$. First, by Theorem $\ref{t2.3}$ and the scaling symmetry $(\ref{1.2})$, $(\ref{4.13})$--$(\ref{4.16})$ hold for the Littlewood--Paley decomposition of
\begin{equation}\label{4.18}
\int_{0}^{1} S(t - \tau)(0, |u|^{p - 1} u) d\tau.
\end{equation}
Also, by the dispersive estimates in $(\ref{2.20})$, the bounds in $(\ref{4.13})$--$(\ref{4.15})$ also hold for $S(t)(P_{\geq j_{0}} u_{0}, P_{\geq j_{0}} u_{1})$. Meanwhile, by the dominated convergence theorem, for $R$ sufficiently large,
\begin{equation}\label{4.19}
\| (1 - \chi(\frac{x}{R})) P_{j} u_{0} \|_{L^{1}} \leq a_{j} 2^{-(\frac{3}{2} + s_{c})j},
\end{equation}
and
\begin{equation}\label{4.20}
\| \nabla^{3} (1 - \chi(\frac{x}{R})) P_{j} u_{0} \|_{L^{1}} \lesssim a_{j} 2^{(\frac{3}{2} - s_{c})j} + \frac{1}{R^{3}} \| \chi'''(\frac{x}{R}) P_{j} u_{0} \|_{L^{1}} \lesssim a_{j} 2^{(\frac{3}{2} - s_{c})j}.
\end{equation}
Similar computations also hold for $u_{1}$, so the bounds in $(\ref{4.13})$--$(\ref{4.15})$ hold for
\begin{equation}\label{4.21}
S(t)(P_{\leq j_{0}} u_{0}, P_{\leq j_{0}} u_{1}) + \int_{0}^{1} S(t - \tau)(0, |u|^{p - 1} u) d\tau.
\end{equation}
Combining the bounds for $(\ref{4.21})$ with $(\ref{4.10})$ and the proofs of Theorems $\ref{t2.2}$ and $\ref{t2.3}$ proves $(\ref{4.13})$--$(\ref{4.15})$.
\end{proof}

Returning to the solutions to $(\ref{4.8})$ and $(\ref{4.9})$ with initial data given by $(\ref{4.7})$, to prove $(\ref{4.11})$ we will use the conformal change of coordinates, similar to the computations in \cite{shen2017scattering}, \cite{dodson2018global} and \cite{dodson2018global2}. First observe that by the finite propagation speed, $v$ is supported on the set
\begin{equation}\label{4.22}
\{ (t, x) : t \geq 1, \qquad |x| \leq t - \frac{1}{2} + \frac{1}{400} \}.
\end{equation}
Since $\frac{3}{2} + \frac{1}{400} < \sqrt{3}$,
\begin{equation}\label{4.23}
\{ (t, x) : |x| \geq t - \frac{1}{2} + \frac{1}{400} \} \cap \{ (t, x) : t \geq 2 \} \subset \{ (t, x) : t^{2} - |x|^{2} \geq 1 \} \cap \{ (t, x) : t \geq 2 \}.
\end{equation}
In fact, there exists some $\delta_{0} > 0$ such that for any $0 \leq \delta \leq \delta_{0}$,
\begin{equation}\label{4.24}
\{ (t, x) : |x| \geq t - \frac{1}{2} + \frac{1}{400} \} \cap \{ (t, x) : t \geq 2 \} \subset \{ (t, x) : t^{2} - |x|^{2} \geq e^{2 \delta} \} \cap \{ (t, x) : t \geq 2 \}.
\end{equation}

Let
\begin{equation}\label{4.25}
\tilde{u}(\tau, y) = \frac{e^{\tau} \sinh |y|}{|y|} u(e^{\tau} \cosh |y|, e^{\tau} \sinh |y| \frac{y}{|y|}).
\end{equation}
By direct computation (see \cite{shen2017scattering}) for more information, if $u$ solves
\begin{equation}\label{4.26}
u_{tt} - \Delta u = F,
\end{equation}
inside $(\ref{4.23})$, then
\begin{equation}\label{4.27}
(\partial_{\tau \tau} - \Delta_{y}) \tilde{u} = e^{3 \tau} \frac{\sinh |y|}{|y|} F(e^{\tau} \cosh |y|, e^{\tau} \sinh |y| \frac{y}{|y|}),
\end{equation}
when $\tau > 0$. In particular, for
\begin{equation}\label{4.28}
\tilde{v}(\tau, y) = \frac{e^{\tau} \sinh |y|}{|y|} v(e^{\tau} \cosh |y|, e^{\tau} \sinh |y| \frac{y}{|y|}), \qquad \tilde{w}(\tau, y) = \frac{e^{\tau} \sinh |y|}{|y|} w(e^{\tau} \cosh |y|, e^{\tau} \sinh |y| \frac{y}{|y|}),
\end{equation}
\begin{equation}\label{4.29}
(\partial_{\tau \tau} - \Delta_{y}) \tilde{w} + e^{-(p - 3) \tau} (\frac{|y|}{\sinh |y|})^{p - 1} |\tilde{w}|^{p - 1} \tilde{w} = 0,
\end{equation}
and
\begin{equation}\label{4.30}
(\partial_{\tau \tau} - \Delta_{y}) \tilde{v} + e^{-(p - 3) \tau} (\frac{|y|}{\sinh |y|})^{p - 1} [|\tilde{u}|^{p - 1} \tilde{u} - |\tilde{w}|^{p - 1} \tilde{w}] = 0.
\end{equation}

Let $E(\tilde{v})$ denote the hyperbolic energy of $\tilde{v}$,
\begin{equation}\label{4.31}
E(\tilde{v}) = \frac{1}{2} \| \tilde{v}_{\tau} \|_{L^{2}}^{2} + \frac{1}{2} \| \nabla \tilde{v} \|_{L^{2}}^{2} + \frac{1}{p + 1} \int e^{-(p - 3) \tau} |\tilde{v}|^{p + 1} (\frac{|y|}{\sinh |y|})^{p - 1} dy.
\end{equation}

\begin{lemma}\label{l4.2}
There exists some $0 \leq \tau_{0} \leq \delta_{0}$ such that $(\ref{4.31})$ is finite.
\end{lemma}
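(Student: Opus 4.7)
The plan is to prove Lemma \ref{l4.2} via an averaging argument: I will show that
\begin{equation*}
\int_{0}^{\delta_{0}} E(\tilde{v}(\tau))\, d\tau < \infty,
\end{equation*}
from which the existence of $\tau_{0} \in [0, \delta_{0}]$ with $E(\tilde{v}(\tau_{0})) < \infty$ follows by the mean value theorem. This sidesteps the need to control $E(\tilde{v}(\tau))$ uniformly in $\tau$, which would be considerably harder.

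To evaluate the averaged integral, I would change variables back to $(t,x)$ coordinates via $t = e^{\tau} \cosh |y|$, $x = e^{\tau} \sinh |y| \cdot \frac{y}{|y|}$, whose Jacobian (as in \cite{shen2017scattering}) is $dt\, dx = e^{4\tau} \bigl(\frac{\sinh |y|}{|y|}\bigr)^{2} d\tau\, dy$. Writing $\Lambda = \frac{e^{\tau} \sinh|y|}{|y|}$ so that $\tilde{v} = \Lambda v$, a direct computation gives $\tilde{v}_{\tau} = \Lambda(v + t v_{t} + x \cdot \nabla v)$, and similarly $\nabla_{y} \tilde{v}$ is a bounded combination of $v$, $v_{t}$, and $\nabla v$ times the Jacobian weight. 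The potential term transforms exactly: $e^{-(p-3)\tau} \bigl(\frac{|y|}{\sinh|y|}\bigr)^{p-1} |\tilde{v}|^{p+1}$ pulled back with the Jacobian gives $|v|^{p+1}$. After these transformations, $\int_{0}^{\delta_{0}} E(\tilde{v}(\tau))\, d\tau$ is dominated by a spacetime integral of the standard energy density $|v_{t}|^{2} + |\nabla v|^{2} + |v|^{p+1} + \frac{|v|^{2}}{t^{2}}$ over the region $\Omega = \{(t, x) : 1 \leq t^{2} - |x|^{2} \leq e^{2\delta_{0}},\ t \geq 1 \}$.

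The key geometric observation is that $\Omega \cap \operatorname{supp}(v)$ is bounded. By the finite propagation speed condition \eqref{4.22}, on the support of $v$ one has $t - |x| \geq \frac{1}{2} - \frac{1}{400}$. Combined with $(t - |x|)(t + |x|) = t^{2} - |x|^{2} \leq e^{2\delta_{0}}$, this forces $t + |x| \leq e^{2\delta_{0}}/(\frac{1}{2} - \frac{1}{400}) \leq C(\delta_{0})$. Thus only a bounded region $\{1 \leq t \leq C(\delta_{0}),\ |x| \leq C(\delta_{0})\}$ contributes.

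To finish, I need $E(v(t)) \lesssim 1$ uniformly on $[1, C(\delta_{0})]$. The initial data $v(1 - \frac{1}{4R})$ defined in \eqref{4.7} is smooth and compactly supported, hence has finite $\dot{H}^{1} \times L^{2}$ norm, and energy is conserved on $[1 - \frac{1}{4R}, 1]$ where $v$ solves the free wave equation. On $[1, \infty)$, $v$ satisfies the perturbed equation \eqref{4.8}, and the argument of Section 3, using the pointwise decay estimates for $w$ from Lemma \ref{l4.1} to control the nonlinear contributions of $w$, yields uniform boundedness of $E(v(t))$ on compact subsets. Multiplying by $C(\delta_{0}) - 1$ bounds the spacetime integral, and hence the averaged hyperbolic energy, finishing the proof. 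The main technical obstacle is the bookkeeping of the Jacobian and derivative transformations and the verification that the resulting integrand is genuinely bounded by the standard energy density (as opposed to a worse expression); this is essentially a careful chain-rule computation along the lines of \cite{shen2017scattering}.
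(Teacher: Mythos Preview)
Your proposal is correct and follows essentially the same route as the paper: integrate $E(\tilde{v}(\tau))$ over $[0,\delta_{0}]$, undo the hyperbolic change of variables, use the support condition \eqref{4.22} to confine the resulting $(t,x)$-integral to a bounded spacetime region, and invoke the Section~3 energy bound on $E(v(t))$ to conclude. The only cosmetic difference is in handling the zeroth-order piece coming from $\tilde v_\tau \supset \tilde v$: the paper lands on the weight $|x|^{-2}|v|^{2}$ and disposes of it with Hardy's inequality (see \eqref{4.36}), whereas your sketch writes $|v|^{2}/t^{2}$; either way the term is harmless once you know the region is bounded and $e^{2\tau}=t^{2}-|x|^{2}\geq 1$ there.
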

\begin{proof}
To prove Lemma $\ref{l4.2}$, it suffices to prove
\begin{equation}\label{4.32}
\int_{0}^{\delta_{0}} \int \tilde{v}_{\tau}(\tau, y)^{2} + |\nabla \tilde{v}|^{2} + e^{-(p - 3) \tau} (\frac{|y|}{\sinh |y|})^{p - 1} |\tilde{v}|^{p + 1} dy d\tau < \infty.
\end{equation}
By direct computation,
\begin{equation}\label{4.33}
\aligned
\tilde{v}_{\tau}(\tau, y) = \frac{e^{2 \tau} \sinh |y| \cosh |y|}{|y|} v_{t}(e^{\tau} \cosh |y|, e^{\tau} \sinh |y| \frac{y}{|y|}) + \frac{e^{2 \tau} \sinh^{2} |y|}{|y|} v_{r}(e^{\tau} \cosh |y|, e^{\tau} \sinh |y| \frac{y}{|y|}) \\ + \frac{e^{\tau} \sinh |y|}{|y|} v(e^{\tau} \cosh |y|, e^{\tau} \sinh |y| \frac{y}{|y|}).
\endaligned
\end{equation}
By the support properties in $(\ref{4.23})$ and $(\ref{4.24})$ and the change of variables formula in \cite{shen2017scattering}, and the proof of Theorem $\ref{t1.6}$ in section three,
\begin{equation}\label{4.34}
\int_{0}^{\delta_{0}} \int \frac{e^{4 \tau} \sinh^{2} |y| \cosh^{2} |y|}{|y|^{2}} v_{t}^{2}(e^{\tau} \cosh |y|, e^{\tau} \sinh |y|)^{2} dy d\tau \lesssim \int_{1}^{2} \int v_{t}(t, y)^{2} dy dt \lesssim_{R} 1,
\end{equation}
\begin{equation}\label{4.35}
\int_{0}^{\delta_{0}} \int \frac{e^{4 \tau} \sinh^{4} |y|}{|y|^{2}} v_{r}^{2}(e^{\tau} \cosh |y|, e^{\tau} \sinh |y|)^{2} dy d\tau \lesssim \int_{1}^{2} \int v_{r}(t, y)^{2} dy dt \lesssim_{R} 1,
\end{equation}
and by Hardy's inequality,
\begin{equation}\label{4.36}
\int_{0}^{\delta_{0}} \int \frac{e^{2\tau} \sinh^{2} |y|}{|y|^{2}} v^{2}(e^{\tau} \cosh |y|, e^{\tau} \sinh |y| \frac{y}{|y|}) \lesssim \int_{1}^{2} \int \frac{1}{|y|^{2}} v(t, y)^{2} dy dt \lesssim_{R} 1.
\end{equation}
Therefore, $\int_{0}^{\delta_{0}} \int \tilde{v}_{\tau}(\tau, y)^{2} dy d\tau \lesssim_{R} 1$. A similar computation proves $\int_{0}^{\delta_{0}} \int |\nabla \tilde{v}(\tau, y)|^{2} dy d\tau \lesssim_{R} 1.$

Finally,
\begin{equation}\label{4.37}
\aligned
\int_{0}^{\delta_{0}} \int \frac{e^{(p + 1)\tau} \sinh^{p + 1} |y|}{|y|^{p + 1}} |v(e^{\tau} \cosh |y|, e^{\tau} \sinh |y| \frac{y}{|y|})|^{p + 1} e^{-(p - 3) \tau} (\frac{|y|}{\sinh |y|})^{p - 1} dy d\tau \\
\lesssim \int_{1}^{2} \int |v(t, y)|^{p + 1} dy dt \lesssim_{R} 1.
\endaligned
\end{equation}
This proves $(\ref{4.32})$, which proves the Lemma.
\end{proof}

The finite energy at $\tau_{0}$ grows very slowly.
\begin{theorem}\label{t4.3}
For $\tau_{0} \leq \tau \leq 1$,
\begin{equation}\label{4.38}
E(\tilde{v}) \lesssim_{R} E(\tilde{v}(\delta_{0})).
\end{equation}
\end{theorem}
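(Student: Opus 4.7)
The plan is to prove Theorem 4.3 by an energy-type argument for the conformal wave equation (4.30), closely modeled on the modified-energy method of Section 3, run on the compact $\tau$-interval $[\tau_0,1]$.

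First I would differentiate $E(\tilde v)$ along the flow. Pairing (4.30) with $\tilde v_\tau$, integrating by parts, and also differentiating the $\tau$-dependent potential in (4.31) leads, after the Taylor expansion
\[
|\tilde u|^{p-1}\tilde u - |\tilde w|^{p-1}\tilde w = |\tilde v|^{p-1}\tilde v + p|\tilde v|^{p-1}\tilde w + O(|\tilde v|^{p-2}|\tilde w|^2) + O(|\tilde w|^p),
\]
to a dissipative potential-energy term $-\tfrac{p-3}{p+1}\int e^{-(p-3)\tau}(\tfrac{|y|}{\sinh|y|})^{p-1}|\tilde v|^{p+1}\,dy$ (good, since $p>3$), a dangerous cross term $-p\int e^{-(p-3)\tau}(\tfrac{|y|}{\sinh|y|})^{p-1}|\tilde v|^{p-1}\tilde v_\tau\tilde w\,dy$, and lower-order pieces. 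The $|\tilde v|^{p-1}\tilde v$ piece of the nonlinearity cancels against the $\tilde v_\tau$-derivative of the potential energy, exactly as in the Cartesian computation (3.5)--(3.6).

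To neutralize the dangerous term I would introduce the modified energy
\[
\mathcal E(\tilde v) = E(\tilde v) + \int e^{-(p-3)\tau}\left(\frac{|y|}{\sinh|y|}\right)^{p-1}|\tilde v|^{p-1}\tilde v\,\tilde w\,dy,
\]
in direct analogy with (3.10). Hölder's inequality gives $|\mathcal E - E|\lesssim E(\tilde v)^{p/(p+1)}\|\tilde w\|_{L^{p+1}}$, so $\mathcal E\sim E$ once $E(\tilde v)$ is sufficiently large. Differentiating $\mathcal E$, the $p\int|\tilde v|^{p-1}\tilde v_\tau\tilde w$ contribution produced by $\partial_\tau$ falling on $\tilde v$ exactly cancels the dangerous term in $\tfrac{d}{d\tau}E(\tilde v)$. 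What remains is the dissipative term; a manageable term $\int e^{-(p-3)\tau}(\tfrac{|y|}{\sinh|y|})^{p-1}|\tilde v|^{p-1}\tilde v\,\tilde w_\tau\,dy$ from $\partial_\tau$ falling on $\tilde w$; a $-(p-3)$ contribution from $\partial_\tau$ acting on $e^{-(p-3)\tau}$; and the Taylor remainders $O(|\tilde v|^{p-2}|\tilde w|^2|\tilde v_\tau|)+O(|\tilde w|^p|\tilde v_\tau|)$.

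Each remaining term I would estimate by $C_R\,\mathcal E(\tilde v)+C_R$ using Hölder together with $L^\infty$ and $L^{p+1}$ bounds on $\tilde w$ and $\tilde w_\tau$ derived from Lemma 4.1 through the conformal change (4.25). Since $\tau\in[\tau_0,1]$ is bounded and $\tilde v$ has compact spatial support by finite propagation speed (cf.\ (4.22)), the weights $e^{\pm\tau}$ and $|y|/\sinh|y|$ stay $O_R(1)$, so Lemma 4.1's Cartesian estimates transfer with $R$-dependent constants. Gronwall's inequality on $[\tau_0,1]$ then yields $E(\tilde v(\tau))\sim\mathcal E(\tilde v(\tau))\lesssim_R E(\tilde v(\delta_0))$. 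The main anticipated obstacle is the bookkeeping in transferring the bounds of Lemma 4.1 through (4.25) and in particular verifying finiteness of $\|\tilde w\|_{L^{p+1}}$ on the support of $\tilde v$ so that $\mathcal E\sim E$ can be used in the Gronwall step; but on the compact time window these reduce to routine computations with $R$-dependent constants.
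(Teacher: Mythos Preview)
Your modified-energy strategy mirrors Section~3, but it breaks at the step where you claim the term
\[
\int e^{-(p-3)\tau}\Bigl(\frac{|y|}{\sinh|y|}\Bigr)^{p-1}|\tilde v|^{p-1}\tilde v\,\tilde w_\tau\,dy
\]
can be controlled by H\"older together with $L^\infty$ or $L^{p+1}$ bounds on $\tilde w_\tau$. No such bounds exist: Lemma~4.1 gives only the frequency-localized estimate $|\partial_t w_j|+|\nabla w_j|\lesssim a_j\,2^{2j/(p-1)}(t-1+\tfrac{1}{4R})^{-1}$, which is \emph{not} summable in $j$. The initial data for $w$ carries the high-frequency piece $P_{\ge j_0}(u_0,u_1)$, so $\partial_t w$ (and hence $\tilde w_\tau$) genuinely fails to lie in $L^\infty$ or $L^{p+1}$, even after restricting to the compact $y$-support of $\tilde v$. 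Already in Section~3 the analogous term $\langle |v|^{p-1}v,w_t\rangle$ is \emph{not} handled by naive H\"older; it requires the Littlewood--Paley paraproduct argument (3.14)--(3.16), balancing the $2^{j}$ loss in $\|P_j w_t\|_{L^\infty}$ against Bernstein gains on $P_j(|v|^{p-1}v)$. Your plan omits this mechanism.

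The paper's proof of Theorem~4.3 does not use a modified energy. It attacks the dangerous term $\int\!\!\int p|\tilde v|^{p-1}\tilde v_\tau\,\tilde w_j$ directly, writing $p|\tilde v|^{p-1}\tilde v_\tau=\partial_\tau(|\tilde v|^{p-1}\tilde v)$ and introducing a Littlewood--Paley-type decomposition \emph{in the $\tau$ variable} (the operators $\tilde P_k$ of (4.53)). For each $w_j$, one integrates by parts in $\tau$ only on the high-$\tau$-frequency piece $\tilde P_{>k}$ (with $k$ chosen according to $j$ and $|y|$), using (4.62) to gain a factor $2^{-k}$ that offsets the $2^{2j/(p-1)}$ cost of $\partial_\tau\tilde w_j$; the low-$\tau$-frequency pieces are estimated directly. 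This paraproduct structure is the essential missing idea in your proposal; once it is in place the argument closes via (4.66) and a short-time bootstrap iterated $O_R(1)$ times, rather than by a single Gronwall step.
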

\begin{proof}
Computing the change of the hyperbolic energy,
\begin{equation}\label{4.39}
\aligned
\frac{d}{d\tau} E(\tilde{v}) = -\frac{p - 3}{p - 1} \int e^{-(p - 3) \tau} |\tilde{v}|^{p + 1} (\frac{|y|}{\sinh |y|})^{p - 1} dy \\ - \frac{1}{p + 1} \int e^{-(p - 3) \tau} (\frac{|y|}{\sinh |y|})^{p - 1} \tilde{v}_{\tau} [|\tilde{u}|^{p - 1} \tilde{u} - |\tilde{v}|^{p - 1} \tilde{v} - |\tilde{w}|^{p - 1} \tilde{w}] dy.
\endaligned
\end{equation}
By $(\ref{3.6})$, for $3 < p < 5$,
\begin{equation}\label{4.40}
|\tilde{u}|^{p - 1} \tilde{u} - |\tilde{v}|^{p - 1} \tilde{v} - |\tilde{w}|^{p - 1} \tilde{w} =  p |\tilde{v}|^{p - 1} \tilde{w} + O(|\tilde{w}|^{\frac{p - 1}{2}} |\tilde{v}|^{\frac{p + 1}{2}}) + O(|\tilde{w}|^{p - 1} |\tilde{v}|).
\end{equation}
By H{\"o}lder's inequality,
\begin{equation}\label{4.41}
\aligned
\int e^{-(p - 3) \tau} (\frac{|y|}{\sinh |y|})^{p - 1} |\tilde{w}|^{\frac{p - 1}{2}} |\tilde{v}|^{\frac{p + 1}{2}} |\tilde{v}_{\tau}| dy \\
\lesssim \| e^{\tau} |w(e^{\tau} \cosh |y|, e^{\tau} \sinh |y| \frac{y}{|y|})|^{\frac{p - 1}{2}} \|_{L^{\infty}} (\int e^{-(p - 3) \tau} (\frac{|y|}{\sinh |y|})^{p - 1} |\tilde{v}|^{p + 1} dy)^{1/2} \| \tilde{v}_{\tau} \|_{L^{2}} \\
\lesssim E(\tilde{v}) \| e^{\tau} |w(e^{\tau} \cosh |y|, e^{\tau} \sinh |y| \frac{y}{|y|})|^{\frac{p - 1}{2}} \|_{L^{\infty}}.
\endaligned
\end{equation}

By Hardy's inequality and the Sobolev embedding theorem,
\begin{equation}\label{4.42}
\aligned
\int e^{-(p - 3) \tau} (\frac{|y|}{\sinh |y|})^{p - 1} |\tilde{v}_{\tau}| |\tilde{v}| |\tilde{w}|^{p - 1} dy \lesssim \| \tilde{v}_{\tau} \|_{L^{2}} \| \frac{1}{|y|} \tilde{v} \|_{L^{2}} \| e^{\tau} |y| |w(e^{\tau} \cosh |y|, e^{\tau} \sinh |y| \frac{y}{|y|})|^{p - 1} \|_{L^{\infty}}
 \\ \lesssim E(\tilde{v}) \| e^{\tau} |w(e^{\tau} \cosh |y|, e^{\tau} \sinh |y| \frac{y}{|y|})|^{\frac{p - 1}{2}} \|_{L^{\infty}} \| |y| |w(e^{\tau} \cosh |y|, e^{\tau} \sinh |y| \frac{y}{|y|})|^{\frac{p - 1}{2}} \|_{L^{\infty}}.
 \endaligned
\end{equation}
By Lemma $\ref{l4.1}$, for any $j \in \mathbb{Z}$,
\begin{equation}\label{4.43}
|w_{j}(e^{\tau} \cosh |y|, e^{\tau} \sinh |y| \frac{y}{|y|})| \lesssim 2^{-j(\frac{p - 3}{p - 1})} (e^{\tau} \cosh |y| - 1 + \frac{1}{4R})^{-1} \lesssim 2^{-j \frac{p - 3}{p - 1}} (e^{\tau} - 1 + \frac{1}{4R})^{-1} a_{j},
\end{equation}
\begin{equation}\label{4.44}
|w_{j}(e^{\tau} \cosh |y|, e^{\tau} \sinh |y| \frac{y}{|y|})| \lesssim 2^{j \frac{2}{p - 1}} a_{j},
\end{equation}
When $j \geq 0$,
\begin{equation}\label{4.45}
\int_{0}^{2^{-j}} 2^{j} e^{\tau} d\tau \lesssim 1,
\end{equation}
and
\begin{equation}\label{4.46}
\int_{2^{-j}}^{\infty} e^{\tau} 2^{-j \frac{p - 3}{2}} (e^{\tau} - 1 + \frac{1}{4R})^{-\frac{p - 1}{2}} d\tau \lesssim \int_{2^{-j}}^{1} \tau^{-\frac{p - 1}{2}} 2^{-j \frac{p - 3}{2}} d\tau + 2^{-j \frac{p - 3}{2}} \int_{1}^{\infty} e^{-\frac{p - 3}{2} \tau} d\tau \lesssim_{p - 3} 1.
\end{equation}
When $j \leq 0$,
\begin{equation}\label{4.47}
\int_{0}^{1} 2^{j} e^{\tau} d\tau \lesssim 1,
\end{equation}
\begin{equation}\label{4.48}
\int_{1}^{-\ln(2) j} 2^{j} e^{\tau} d\tau \lesssim 1,
\end{equation}
and
\begin{equation}\label{4.49}
\int_{-\ln(2) j}^{\infty} e^{-j(\frac{p - 3}{2})} e^{-\tau \frac{p - 1}{2}} d\tau \lesssim 1.
\end{equation}
Therefore,
\begin{equation}\label{4.50}
\int_{0}^{\infty} \| e^{\tau} |w(e^{\tau} \cosh |y|, e^{\tau} \sinh |y| \frac{y}{|y|})|^{\frac{p - 1}{2}} \|_{L^{\infty}} d\tau \lesssim \epsilon.
\end{equation}
Also, by Lemma $\ref{l4.1}$,
\begin{equation}\label{4.51}
\| |y| |w(e^{\tau} \cosh |y|, e^{\tau} \sinh |y| \frac{y}{|y|})|^{\frac{p - 1}{2}} \|_{L^{\infty}} \lesssim \epsilon.
\end{equation}
Therefore, the contribution of $(\ref{4.41})$ and $(\ref{4.42})$ to $(\ref{4.39})$ may be absorbed into the left hand side of $(\ref{4.39})$, proving
\begin{equation}\label{4.52}
\sup_{\tau_{0} \leq \tau \leq T_{0}} E(\tilde{v}(\tau)) \lesssim E(\tilde{v}(\delta_{0})) + \sup_{\tau_{0} \leq T \leq T_{0}} \int_{\tau_{0}}^{T} \int p |\tilde{v}|^{p - 1} \tilde{v}_{\tau} \tilde{w} (\frac{|y|}{\sinh |y|})^{p - 1} e^{-(p - 3) \tau} dy d\tau.
\end{equation}
Lemma $\ref{l4.1}$ implies that to bound the second term on the right hand side of $(\ref{4.52})$, it suffices to obtain a bound of
\begin{equation}\label{4.52.1}
\sup_{\tau_{0} \leq T \leq 1} \int_{\tau_{0}}^{T} \int p |\tilde{v}|^{p - 1} \tilde{v}_{\tau} \tilde{w}_{j} (\frac{|y|}{\sinh |y|})^{p - 1} e^{-(p - 3) \tau} dy d\tau,
\end{equation}
with a bound summable in $j$.

To prove this bound we will use a modification of the Littlewood--Paley decomposition. Let $\psi \in C_{0}^{\infty}(\mathbb{R})$ be a smooth function satisfying $\psi(x) \geq 0$ on $\mathbb{R}$, $\int \psi(x) dx = 1$, and $\psi(x)$ is supported on $|x| \leq 1$. Then for $f \in L_{\tau}^{1}$, set
\begin{equation}\label{4.53}
\aligned
\tilde{P}_{0} f = \int \psi(\tau - s) f(s), \qquad \text{and for} \qquad k > 0, \\ \qquad \tilde{P}_{k} f = \int 2^{k} \psi(2^{k}(\tau - s)) f(s) ds - \int 2^{k - 1} \psi(2^{k - 1}(\tau - s)) f(s) ds.
\endaligned
\end{equation}
Also observe that for any $k > 0$, summing up the telescoping sum in $(\ref{4.53})$,
\begin{equation}\label{4.53.1}
\aligned
 \tilde{P}_{\leq k} f = \int 2^{k} \psi(2^{k}(\tau - s)) f(s) ds.
\endaligned
\end{equation}

Suppose $E(\tilde{v})$ is bounded on the interval $[\tau_{0}, T]$. Then by local well-posedness arguments,
\begin{equation}\label{4.54}
1_{[\tau_{0}, T]} \partial_{\tau}(|\tilde{v}|^{p - 1} \tilde{v}) \in L_{\tau, y}^{1},
\end{equation}
so
\begin{equation}\label{4.55}
\aligned
\tilde{P}_{0}(1_{[\tau_{0}, T]} \partial_{\tau}(|\tilde{v}|^{p - 1} \tilde{v})) + \sum_{j \geq 1} \tilde{P}_{j}(1_{[\tau_{0}, T]} \partial_{\tau}(|\tilde{v}|^{p - 1} \tilde{v})) \\ = \partial_{\tau} \tilde{P}_{0}(1_{[\tau_{0}, T]} (|\tilde{v}|^{p - 1} \tilde{v})) + \sum_{j \geq 1} \tilde{P}_{j}\partial_{\tau} (1_{[\tau_{0}, T]} (|\tilde{v}|^{p - 1} \tilde{v})) - (|\tilde{v}|^{p - 1} \tilde{v})|_{\tau_{0}}^{T}.
\endaligned
\end{equation}
for almost every $\tau \in \mathbb{R}$, where $1_{[a, b]}$ is the indicator function of the interval $[a, b]$.

The second term on the right hand side of $(\ref{4.55})$ can be computed using Hardy's inequality and Lemma $\ref{l4.1}$,
\begin{equation}\label{4.57}
\aligned
\int (|\tilde{v}|^{p - 1} \tilde{v}) (\frac{|y|}{|\sinh |y|})^{p - 2} e^{-(p - 3) \frac{p - 2}{p - 1} \tau} [e^{\frac{2}{p - 1} \tau} w_{j}(e^{\tau} \cosh |y|, e^{\tau} \sinh |y| \frac{y}{|y|})]|_{\tau_{0}}^{T} dy \\
\lesssim (\int e^{-(p - 3) \tau} (\frac{|y|}{\sinh |y|})^{p - 1} |\tilde{v}|^{p + 1} dy)^{\frac{p - 2}{p - 1}} \| \frac{1}{|y|} \tilde{v} \|_{L^{2}}^{\frac{2}{p - 1}} \| |y|^{\frac{2}{p - 1}} e^{\frac{2}{p - 1} \tau} |w_{j}(e^{\tau} \cosh |y|, e^{\tau} \sinh |y| \frac{y}{|y|})| \|_{L^{\infty}}|_{\tau_{0}}^{T} \\
\lesssim E(\tilde{v}) \| |y|^{\frac{2}{p - 1}} e^{\frac{2}{p - 1} \tau} |w_{j}(e^{\tau} \cosh |y|, e^{\tau} \sinh |y| \frac{y}{|y|})| \|_{L^{\infty}}|_{\tau_{0}}^{T} \lesssim a_{j} \sup_{\tau \in [\tau_{0}, T]} E(\tilde{v}).
\endaligned
\end{equation}
The sum of these terms in $j$ can be absorbed into the left hand side of $(\ref{4.52})$.

To handle the first term on the right hand side of $(\ref{4.55})$, it is useful to consider a number of cases separately.\medskip

\noindent \textbf{Case 1, $2^{j} e^{|y|} \leq 1$:} By Lemma $\ref{l4.1}$,
\begin{equation}\label{4.56}
\aligned
 \int_{\tau_{0}}^{T} \partial_{\tau} \tilde{P}_{0}(1_{[\delta_{0}, T]} (|\tilde{v}|^{p - 1} \tilde{v})) (\frac{|y|}{|\sinh |y|})^{p - 2} e^{-(p - 3) \frac{p - 2}{p - 1} \tau} [e^{\frac{2}{p - 1} \tau} w_{j}(e^{\tau} \cosh |y|, e^{\tau} \sinh |y| \frac{y}{|y|})] d\tau \\
\lesssim \int_{\tau_{0}}^{T} |\frac{1}{|y|} \tilde{v}|^{\frac{2}{p - 1}} (e^{-(p - 3) \tau} (\frac{|y|}{\sinh |y|})^{p - 1} |\tilde{v}|^{p + 1})^{\frac{p - 2}{p - 1}} d\tau \cdot \sup_{\tau \in [\tau_{0}, T]} (|y|^{\frac{2}{p - 1}} e^{\tau \frac{2}{p - 1}}  |w_{j}(e^{\tau} \cosh |y|, e^{\tau} \sinh |y| \frac{y}{|y|})|) \\
\lesssim a_{j}  \int_{\tau_{0}}^{T} |\frac{1}{|y|} \tilde{v}|^{\frac{2}{p - 1}} (e^{-(p - 3) \tau} (\frac{|y|}{\sinh |y|})^{p - 1} |\tilde{v}|^{p + 1})^{\frac{p - 2}{p - 1}} d\tau.
\endaligned
\end{equation}
Next, by the fundamental theorem of calculus, for any $k \geq 0$,
\begin{equation}\label{4.62}
\tilde{P}_{> k} f(\tau) = f(\tau) - 2^{k} \int \psi(2^{k}(\tau - s)) f(s) ds = 2^{k} \int \psi(2^{k}(\tau - s))[f(\tau) - f(s)] ds = 2^{k} \int \psi(2^{k}(\tau - s)) \int_{s}^{\tau} f'(r) dr.
\end{equation}
Integrating by parts and following $(\ref{4.57})$ for the third term, $(\ref{4.62})$ with $j = 0$ for the second, and $(\ref{4.56})$ for the first,
\begin{equation}\label{4.60}
\int_{\tau_{0}}^{T} \partial_{\tau} \tilde{P}_{> 0}(1_{[\delta_{0}, T]} |\tilde{v}|^{p - 1} \tilde{v}) (\frac{|y|}{\sinh |y|})^{p - 2} e^{-(p - 3) \frac{p - 2}{p - 1} \tau} [e^{\frac{2}{p - 1} \tau} w_{j}(e^{\tau} \cosh |y|, e^{\tau} \sinh |y| \frac{y}{|y|})] d\tau
\end{equation}
\begin{equation}\label{4.61}
\aligned
= (p - 4) \int_{\tau_{0}}^{T} \tilde{P}_{> 0}(1_{[\delta_{0}, T]} |\tilde{v}|^{p - 1} \tilde{v}) (\frac{|y|}{\sinh |y|})^{p - 2} e^{-(p - 3) \frac{p - 2}{p - 1} \tau} [e^{\frac{2}{p - 1} \tau} w_{j}(e^{\tau} \cosh |y|, e^{\tau} \sinh |y| \frac{y}{|y|})] d\tau \\
- \int_{\tau_{0}}^{T} \tilde{P}_{> 0}(1_{[\tau_{0}, T]} |\tilde{v}|^{p - 1} \tilde{v}) (\frac{|y|}{\sinh |y|})^{p - 2} e^{-(p - 3) \frac{p - 2}{p - 1} \tau}  e^{\frac{2}{p - 1} \tau} \\ \times [e^{\tau} \cosh |y| (\partial_{t} w_{j}) + e^{\tau} \sinh |y| (\partial_{r} w_{j})](e^{\tau} \cosh |y|, e^{\tau} \sinh |y| \frac{y}{|y|})] d\tau \\
+ \tilde{P}_{> 0}(1_{[\tau_{0}, T]} |\tilde{v}|^{p - 1} \tilde{v}) (\frac{|y|}{\sinh |y|})^{p - 2} e^{-(p - 3) \frac{p - 2}{p - 1} \tau} [e^{\frac{2}{p - 1} \tau} w_{j}(e^{\tau} \cosh |y|, e^{\tau} \sinh |y| \frac{y}{|y|})]|_{\tau_{0}}^{T}
\endaligned
\end{equation}
\begin{equation}\label{4.59}
\aligned
\lesssim a_{j}  \int_{\tau_{0}}^{T} |\frac{1}{|y|} \tilde{v}|^{\frac{2}{p - 1}} (e^{-(p - 3) \tau} (\frac{|y|}{\sinh |y|})^{p - 1} |\tilde{v}|^{p + 1})^{\frac{p - 2}{p - 1}} d\tau \\ + a_{j}  \int_{\tau_{0}}^{T} |\tilde{v}_{\tau}|^{\frac{2}{p - 1}} (e^{-(p - 3) \tau} (\frac{|y|}{\sinh |y|})^{p - 1} |\tilde{v}|^{p + 1})^{\frac{p - 2}{p - 1}} d\tau + a_{j} \sup_{\tau \in [\tau_{0}, T]} E(\tilde{v}).
\endaligned
\end{equation}
These estimates are acceptable for our purposes.\medskip

\noindent \textbf{Case 2, $2^{j} e^{|y|} \sim 2^{k} \geq 1$:} In this case, the contribution of $\tilde{P}_{> k}$ may be handled in a manner very similar to $(\ref{4.60})$ and $(\ref{4.61})$. Indeed,
\begin{equation}\label{4.63}
\int_{\tau_{0}}^{T} \partial_{\tau} \tilde{P}_{> k}(1_{[\delta_{0}, T]} |\tilde{v}|^{p - 1} \tilde{v}) (\frac{|y|}{\sinh |y|})^{p - 2} e^{-(p - 3) \frac{p - 2}{p - 1} \tau} [e^{\frac{2}{p - 1} \tau} w_{j}(e^{\tau} \cosh |y|, e^{\tau} \sinh |y| \frac{y}{|y|})] d\tau
\end{equation}
\begin{equation}\label{4.64}
\aligned
= (p - 4) \int_{\tau_{0}}^{T} \tilde{P}_{> k}(1_{[\delta_{0}, T]} |\tilde{v}|^{p - 1} \tilde{v}) (\frac{|y|}{\sinh |y|})^{p - 2} e^{-(p - 3) \frac{p - 2}{p - 1} \tau} [e^{\frac{2}{p - 1} \tau} w_{j}(e^{\tau} \cosh |y|, e^{\tau} \sinh |y| \frac{y}{|y|})] d\tau \\
- \int_{\tau_{0}}^{T} \tilde{P}_{> k}(1_{[\delta_{0}, T]} |\tilde{v}|^{p - 1} \tilde{v}) (\frac{|y|}{\sinh |y|})^{p - 2} e^{-(p - 3) \frac{p - 2}{p - 1} \tau}  e^{\frac{2}{p - 1} \tau} \\ \times [e^{\tau} \cosh |y| (\partial_{t} w_{j}) + e^{\tau} \sinh |y| (\partial_{r} w_{j})](e^{\tau} \cosh |y|, e^{\tau} \sinh |y| \frac{y}{|y|})] d\tau \\
+ \tilde{P}_{> k}(1_{[\tau_{0}, T]} |\tilde{v}|^{p - 1} \tilde{v}) (\frac{|y|}{\sinh |y|})^{p - 2} e^{-(p - 3) \frac{p - 2}{p - 1} \tau} [e^{\frac{2}{p - 1} \tau} w_{j}(e^{\tau} \cosh |y|, e^{\tau} \sinh |y| \frac{y}{|y|})]|_{\tau_{0}}^{T}
\endaligned
\end{equation}
Following the computations in $(\ref{4.62})$ and using the fact that Lemma $\ref{l4.1}$ implies
\begin{equation}\label{4.64.1}
\|  [e^{\tau} \cosh |y| (\partial_{t} w_{j}) + e^{\tau} \sinh |y| (\partial_{r} w_{j})](e^{\tau} \cosh |y|, e^{\tau} \sinh |y| \frac{y}{|y|})] \|_{L^{\infty}} \lesssim R e^{k \frac{2}{p - 1}}.
\end{equation}
The computations in $(\ref{4.56})$ may be copied over in this case. Finally, take $l \in \mathbb{Z}$, $0 < l \leq k$. In this case, by Lemma $\ref{l4.1}$,
\begin{equation}\label{4.65}
\aligned
\int_{\tau_{0}}^{T} \partial_{\tau} \tilde{P}_{l}(1_{[\delta_{0}, T]} |\tilde{v}|^{p - 1} \tilde{v}) (\frac{|y|}{\sinh |y|})^{p - 2} e^{-(p - 3) \frac{p - 2}{p - 1} \tau} [e^{\frac{2}{p - 1} \tau} w_{j}(e^{\tau} \cosh |y|, e^{\tau} \sinh |y| \frac{y}{|y|})] d\tau \\
\lesssim 2^{\frac{p - 3}{p - 1}(l - k)} a_{j} R \int_{\tau_{0}}^{T} |\tilde{v}_{\tau}|^{\frac{2}{p - 1}} (e^{-(p - 3) \tau} (\frac{|y|}{\sinh |y|})^{p - 1} |\tilde{v}|^{p + 1})^{\frac{p - 2}{p - 1}} d\tau.
\endaligned
\end{equation}
Summing up in $j$ and $l$, and integrating in $y$, we have therefore proved
\begin{equation}\label{4.66}
\sup_{\tau_{0} \leq \tau \leq T_{0}} E(\tilde{v}(\tau)) \lesssim E(\tilde{v}(\delta_{0})) + \epsilon R \int_{\tau_{0}}^{T_{0}} (\int \frac{1}{|y|^{2}} |\tilde{v}|^{2} + |\tilde{v}_{\tau}|^{2} dy)^{\frac{1}{p - 1}} \cdot (e^{-(p - 3) \tau} (\frac{|y|}{\sinh |y|})^{p - 1} |\tilde{v}|^{p + 1} dy)^{\frac{p - 2}{p - 1}} d\tau.
\end{equation}
Taking $T_{0} = \tau_{0} + \frac{1}{R}$, and making a standard bootstrap argument, it is possible to absorb the second term on the right hand side of $(\ref{4.66})$ into the left hand side. Iterating this argument $O_{R}(1)$ times proves the Theorem.
\end{proof}

We can upgrade this to a global integral result.
\begin{theorem}\label{t4.4}
\begin{equation}\label{4.66.1}
\int_{1}^{\infty} \int |\tilde{v}|^{p + 1} e^{-(p - 3) \tau} (\frac{|y|}{\sinh |y|})^{p - 1} dy d\tau \lesssim_{R} 1.
\end{equation}
\end{theorem}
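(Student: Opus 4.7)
The plan is to integrate the energy identity \eqref{4.39} from $\tau_{0}$ to $\infty$ and exploit the fact that, for $p > 3$, the first term on the right hand side,
\[
-\frac{p - 3}{p - 1} \int e^{-(p - 3) \tau} |\tilde{v}|^{p + 1} \Bigl(\frac{|y|}{\sinh |y|}\Bigr)^{p - 1} dy,
\]
has a negative sign and is \emph{precisely} the integrand appearing in \eqref{4.66.1}. Since $E(\tilde{v}(\tau_{0}))$ is finite by Lemma $\ref{l4.2}$, controlling the $w$-interaction terms on the right of \eqref{4.39} uniformly in time will convert monotone decay of $E(\tilde{v})$ into the desired space-time bound.

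First I would rewrite \eqref{4.39} as
\[
\frac{d}{d\tau} E(\tilde{v}) + \frac{p - 3}{p - 1} \int e^{-(p - 3) \tau} |\tilde{v}|^{p + 1} \Bigl(\frac{|y|}{\sinh |y|}\Bigr)^{p - 1} dy = \mathcal{R}_{w}(\tau),
\]
where $\mathcal{R}_{w}$ collects the terms involving $\tilde{w}$ obtained from the Taylor expansion \eqref{4.40}. Integrating over $[\tau_{0}, T]$ and using $E(\tilde v(T)) \ge 0$ gives
\[
\frac{p - 3}{p - 1} \int_{\tau_{0}}^{T} \int e^{-(p - 3) \tau} |\tilde{v}|^{p + 1} \Bigl(\frac{|y|}{\sinh |y|}\Bigr)^{p - 1} dy \, d\tau \leq E(\tilde{v}(\tau_{0})) + \int_{\tau_{0}}^{T} |\mathcal{R}_{w}(\tau)| \, d\tau.
\]

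Next I would bound $\int |\mathcal{R}_{w}|$ by reusing the machinery already developed in the proof of Theorem $\ref{t4.3}$. The cubic- and quintic-type contributions $O(|\tilde{w}|^{(p-1)/2} |\tilde{v}|^{(p+1)/2})$ and $O(|\tilde{w}|^{p - 1} |\tilde{v}|)$ are controlled by \eqref{4.41}--\eqref{4.42} in terms of $E(\tilde{v}) \cdot \|e^{\tau} |w|^{(p - 1)/2}\|_{L^{\infty}}$, whose time integral over $[0, \infty)$ is bounded by $\epsilon$ via \eqref{4.50}--\eqref{4.51}. The dominant linear-in-$\tilde w$ term $\langle p |\tilde{v}|^{p - 1} \tilde{v}, \tilde{w}_{\tau} \rangle$ is handled by the integration-by-parts plus modified Littlewood--Paley decomposition of \eqref{4.52.1}--\eqref{4.65}; crucially, the pointwise decay estimates \eqref{4.13}--\eqref{4.15} from Lemma $\ref{l4.1}$ hold for \emph{all} $t \geq 1$, so these bounds are global in $\tau$. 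The net outcome is an inequality of the form
\[
\int_{\tau_{0}}^{T} |\mathcal{R}_{w}(\tau)| d\tau \lesssim \epsilon R \int_{\tau_{0}}^{T} \Bigl( \int \Bigl[\tfrac{|\tilde v|^{2}}{|y|^{2}} + \tilde v_{\tau}^{2}\Bigr] dy \Bigr)^{\frac{1}{p-1}} \Bigl( \int e^{-(p - 3) \tau} \Bigl(\tfrac{|y|}{\sinh |y|}\Bigr)^{p - 1} |\tilde v|^{p + 1} dy \Bigr)^{\frac{p - 2}{p - 1}} d\tau.
\]
Hardy's inequality bounds the first factor by $E(\tilde v(\tau))^{1/(p-1)}$, and Young's inequality (with exponents $p-1$ and $(p-1)/(p-2)$) splits the product into a small multiple of the potential integral---absorbable into the left hand side once $\epsilon$ is chosen small---plus a term bounded by $\epsilon R \int E(\tilde v(\tau)) d\tau$.

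Finally I would close the argument by a bootstrap on intervals $[\tau_{n}, \tau_{n} + 1/R]$ exactly as in the last paragraph of the proof of Theorem $\ref{t4.3}$: on each such interval the error term $\epsilon R \int E(\tilde v) d\tau$ is bounded by $\epsilon \sup_{[\tau_{n}, \tau_{n}+1/R]} E(\tilde v)$, so it can be absorbed. This simultaneously propagates the uniform bound on $E(\tilde v(\tau))$ and on the partial integral $\int_{\tau_{0}}^{\tau} \int e^{-(p - 3) s} (\cdots) |\tilde v|^{p + 1} dy \, ds$. The main obstacle is that a naive iteration over $[\tau_{0}, \infty)$ would produce exponential growth of the constant in the number of iterations; the key observation that saves it is that, by \eqref{4.13}--\eqref{4.15}, the effective size of $w$ on $[\tau_{n}, \tau_{n} + 1/R]$ decays like $e^{-\tau_{n}}$ (and $\|e^{\tau} |w|^{(p-1)/2}\|_{L^{\infty}}$ is integrable down to $e^{-\tau (p - 3)/2}$), so the errors on successive intervals form a geometric series and sum to a finite multiple of $\epsilon R$. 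Combining this with Theorem $\ref{t4.3}$ on $[\tau_{0}, 1]$ and noting $\tau_{0} \leq 1$ yields the bound \eqref{4.66.1}.
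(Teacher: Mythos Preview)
Your overall strategy---integrate the energy identity \eqref{4.39}, move the negative potential term to the left, and control the $\tilde w$-interaction error $\mathcal{R}_{w}$---matches the paper exactly, and your handling of the $O(|\tilde w|^{(p-1)/2}|\tilde v|^{(p+1)/2})$ and $O(|\tilde w|^{p-1}|\tilde v|)$ terms via \eqref{4.41}--\eqref{4.42} and \eqref{4.50}--\eqref{4.51} is correct. The gap is in how you close the main term $\langle p|\tilde v|^{p-1}\tilde v_{\tau},\tilde w\rangle$ on the unbounded interval $[1,\infty)$.

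You propose to reuse the machinery of Theorem~\ref{t4.3} and then sum over intervals $[\tau_{n},\tau_{n}+1/R]$ as a geometric series, on the grounds that ``the effective size of $w$ decays like $e^{-\tau_{n}}$.'' But the output of that machinery is precisely \eqref{4.66}, whose error carries the factor $\int\bigl(\tfrac{|\tilde v|^{2}}{|y|^{2}}+\tilde v_{\tau}^{2}\bigr)dy$ with \emph{no spatial weight}. Bounding this by $E(\tilde v)$ via Hardy and then summing your intervals gives $\sum_{n}\epsilon\,e^{-c\tau_{n}}\sup E(\tilde v)$ with $\tau_{n}=n/R$, and $\sum_{n\ge 0}e^{-cn/R}\sim R/c$; so the total error is of order $\epsilon R\sup E(\tilde v)$, not $\epsilon\sup E(\tilde v)$. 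Since $R=R(u_{0},u_{1},\epsilon)$ is chosen \emph{after} $\epsilon$ via \eqref{4.4} and can be arbitrarily large, $\epsilon R$ is not small and the bootstrap does not close. The pointwise decay of $w$ in $t$ that you invoke is entangled with decay in $|y|$ (see \eqref{4.67}: the bound is $e^{-\tau(p-3)/(p-1)}\cosh^{-1}|y|$), and you cannot exploit one without tracking the other.

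What the paper does instead is re-run the Littlewood--Paley analysis with a $\tau$-dependent partition of unity \eqref{4.70} and threshold $k(m,y)=\sup\{0,\,j+|y|/\ln 2+m/\ln 2\}$, which captures the joint $(\tau,|y|)$-decay and produces an error with the \emph{weighted} factor $\int\bigl(\tfrac{\tilde v_{\tau}^{2}}{\cosh^{2}|y|}+\tfrac{|\tilde v|^{2}}{|y|^{2}\cosh^{2}|y|}\bigr)dy$; see \eqref{4.72}--\eqref{4.74}. This weighted spacetime integral is then controlled by a genuinely new ingredient, the local energy decay estimate of Theorem~\ref{t4.5} (proved by a virial/Morawetz computation in Section~5), which bounds it by $\sup E(\tilde v)+\epsilon\int(\text{potential})$. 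Both pieces are now absorbable into the left of \eqref{4.69}, closing the argument globally in one shot rather than by iteration. Your proposal does not mention or replace this tool, and without it (or a careful re-derivation of \eqref{4.66} with explicit joint decay in $\tau$ and $|y|$) the argument remains incomplete.
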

\begin{proof}
When $\tau \geq 1$, by Lemma $\ref{l4.1}$,
\begin{equation}\label{4.67}
e^{\tau \frac{2}{p - 1}} |w_{j}(e^{\tau} \cosh |y|, e^{\tau} \sinh |y| \frac{y}{|y|})| \lesssim e^{-\tau \frac{p - 3}{p - 1}} 2^{-j \frac{p - 3}{p - 1}} \cosh(|y|)^{-1},
\end{equation}
and
\begin{equation}\label{4.68}
\aligned
e^{\tau \frac{2}{p - 1}} e^{\tau} \cosh |y| |(\partial_{t} w_{j})(e^{\tau} \cosh |y|, e^{\tau} \sinh |y| \frac{y}{|y|})| \\ + e^{\tau \frac{2}{p - 1}} e^{\tau} \cosh |y| |(\partial_{r} w_{j})(e^{\tau} \cosh |y|, e^{\tau} \sinh |y| \frac{y}{|y|})| \lesssim e^{\tau \frac{2}{p - 1}} 2^{j \frac{2}{p - 1}}.
\endaligned
\end{equation}

Revisiting $(\ref{4.39})$ and $(\ref{4.52})$,
\begin{equation}\label{4.69}
\aligned
\int_{1}^{T_{0}} \int e^{-(p - 3) \tau} (\frac{|y|}{\sinh |y|})^{p - 1} |\tilde{v}|^{p + 1} dy d\tau + \sup_{1 \leq \tau \leq T_{0}} E(\tilde{v}(\tau)) \lesssim E(\tilde{v}(1)) \\ + \sup_{1 \leq T \leq T_{0}} \int_{1}^{T} \int p |\tilde{v}|^{p - 1} \tilde{v}_{\tau} \tilde{w} (\frac{|y|}{\sinh |y|})^{p - 1} e^{-(p - 3) \tau} dy d\tau.
\endaligned
\end{equation}
Now take the partition of unity
\begin{equation}\label{4.70}
1 = \sum_{m \in \mathbb{Z}} \chi(\tau - m), \qquad \text{which satisfies} \qquad \sum_{m \in \mathbb{Z}} |\chi'(\tau - m)| \lesssim 1.
\end{equation}
Let $k(m, y) = \sup \{ 0, j + \frac{|y|}{\ln(2)} + \frac{m}{\ln(2)} \}$. Integrating by parts in $\tau$, as in $(\ref{4.60})$ and $(\ref{4.61})$,
\begin{equation}\label{4.71}
\sum_{m} \int_{1}^{T} \partial_{\tau} \tilde{P}_{> k(m, y)}(1_{[1, T]} |\tilde{v}|^{p - 1} \tilde{v}) (\frac{|y|}{\sinh |y|})^{p - 2} e^{-(p - 3) \frac{p - 2}{p - 1} \tau} \cdot \chi(\tau - m) [e^{\frac{2}{p - 1} \tau} w_{j}(e^{\tau} \cosh |y|, e^{\tau} \sinh |y| \frac{y}{|y|})] d\tau
\end{equation}
\begin{equation}\label{4.71}
\aligned
\lesssim \sum_{m} \int_{1}^{T} \tilde{P}_{> k(m, y)}(1_{[1, T]} |\tilde{v}|^{p - 1} \tilde{v}) (\frac{|y|}{\sinh |y|})^{p - 2} e^{-(p - 3) \frac{p - 2}{p - 1} \tau} \\ \times (|\chi(\tau - m)| + |\chi'(\tau - m)|) [e^{\frac{2}{p - 1} \tau} w_{j}(e^{\tau} \cosh |y|, e^{\tau} \sinh |y| \frac{y}{|y|})] d\tau \\
- \sum_{m} \int_{1}^{T} \tilde{P}_{> k(m, y)}(1_{[1, T]} |\tilde{v}|^{p - 1} \tilde{v}) (\frac{|y|}{\sinh |y|})^{p - 2} e^{-(p - 3) \frac{p - 2}{p - 1} \tau}  e^{\frac{2}{p - 1} \tau} \\ \times \chi(\tau - m) [e^{\tau} \cosh |y| (\partial_{t} w_{j}) + e^{\tau} \sinh |y| (\partial_{r} w_{j})](e^{\tau} \cosh |y|, e^{\tau} \sinh |y| \frac{y}{|y|})] d\tau \\
+ \sum_{m} \tilde{P}_{> k(m, y)} \chi(\tau - m) (1_{[1, T]} |\tilde{v}|^{p - 1} \tilde{v}) (\frac{|y|}{\sinh |y|})^{p - 2} e^{-(p - 3) \frac{p - 2}{p - 1} \tau} [e^{\frac{2}{p - 1} \tau} w_{j}(e^{\tau} \cosh |y|, e^{\tau} \sinh |y| \frac{y}{|y|})]|_{1}^{T}.
\endaligned
\end{equation}
Therefore, using the computations leading up to $(\ref{4.66})$, by $(\ref{4.67})$,
\begin{equation}\label{4.72}
\int (\ref{4.71}) dy \lesssim a_{j} \int_{1}^{T} (\int \frac{1}{|y|^{2} \cosh^{2} |y|} |\tilde{v}|^{2} + \frac{1}{\cosh^{2} |y|} |\tilde{v}_{\tau}|^{2} dy)^{\frac{1}{p - 1}} \cdot (e^{-(p - 3) \tau} (\frac{|y|}{\sinh |y|})^{p - 1} |\tilde{v}|^{p + 1} dy)^{\frac{p - 2}{p - 1}} d\tau.
\end{equation}
Meanwhile, using the computations in $(\ref{4.56})$ and $(\ref{4.65})$,
\begin{equation}\label{4.73}
\aligned
\int \sum_{m} \int_{1}^{T} \partial_{\tau} \tilde{P}_{\leq k(m, y)}(1_{[1, T]} |\tilde{v}|^{p - 1} \tilde{v}) (\frac{|y|}{\sinh |y|})^{p - 2} e^{-(p - 3) \frac{p - 2}{p - 1} \tau} \\ \times \chi(\tau - m) [e^{\frac{2}{p - 1} \tau} w_{j}(e^{\tau} \cosh |y|, e^{\tau} \sinh |y| \frac{y}{|y|})] d\tau dy \\
\lesssim a_{j} \int_{1}^{T} (\int \frac{1}{|y|^{2} \cosh^{2} |y|} |\tilde{v}|^{2} + \frac{1}{\cosh^{2} |y|} |\tilde{v}_{\tau}|^{2} dy)^{\frac{1}{p - 1}} \cdot (e^{-(p - 3) \tau} (\frac{|y|}{\sinh |y|})^{p - 1} |\tilde{v}|^{p + 1} dy)^{\frac{p - 2}{p - 1}} d\tau.
\endaligned
\end{equation}
Summing in $j$,
\begin{equation}\label{4.74}
\aligned
\sum_{j} (\ref{4.72}) + (\ref{4.73}) \lesssim \epsilon (\int_{1}^{T} \int e^{-(p - 3) \tau} (\frac{|y|}{\sinh |y|})^{p - 1} |\tilde{v}|^{p + 1} dy d\tau) \\ + \epsilon  (\int_{1}^{T} \int \frac{1}{|y|^{2} \cosh^{2} |y|} |\tilde{v}|^{2} + \frac{1}{\cosh^{2} |y|} |\tilde{v}_{\tau}|^{2} dy d\tau).
\endaligned
\end{equation}
The first term on the right hand side of $(\ref{4.74})$ may be absorbed into the left hand side of $(\ref{4.69})$. The second term on the right hand side of $(\ref{4.74})$ can be controlled by a local energy decay estimate.

\begin{theorem}[Local energy decay]\label{t4.5}
\begin{equation}\label{4.75}
\aligned
\int_{1}^{T} \int \frac{1}{(1 + |y|^{2})^{3/2}} [\tilde{v}_{\tau}^{2} + |\nabla \tilde{v}|^{2}] dy d\tau + \int_{1}^{T} \int \frac{1}{(1 + |y|^{2})^{5/2}} \tilde{v}^{2} dy d\tau \\ \lesssim \sup_{\tau \in [1, T]} E(\tilde{v}) + \epsilon \int_{1}^{T} \int e^{-(p - 3) \tau} (\frac{|y|}{\sinh |y|})^{p - 1} |\tilde{v}|^{p + 1} dy d\tau.
\endaligned
\end{equation}
\end{theorem}
Postponing the proof of Theorem $\ref{t4.5}$, Theorem $\ref{t4.4}$ follows.
\end{proof}
The bounds in Theorem $\ref{t4.4}$ imply bounds on $\| v \|_{L_{t,x}^{2(p - 1)}([2, \infty) \times \mathbb{R}^{3})} < \infty$. Since $E(\tilde{v})$ is uniformly bounded, and $(\ref{4.66.1})$ is finite, partition $[0, \infty)$ into finitely many subintervals such that
\begin{equation}\label{4.76}
\int_{I_{j}} \int e^{-(p - 3) \tau} (\frac{|y|}{\sinh |y|})^{p - 1} |\tilde{v}(\tau, y)|^{p + 1} dy d\tau < \epsilon.
\end{equation}
For any $3 < p < 5$, by $(\ref{4.42})$ and $(\ref{4.30})$ there exists $\theta(p)$ such that for $\epsilon > 0$ sufficiently small,
\begin{equation}\label{4.77}
\| \tilde{v} \|_{\dot{S}^{1}(I_{j} \times \mathbb{R}^{3})} \lesssim \sup_{\tau} E(\tilde{v})^{1/2} + \epsilon^{\theta(p - 1)} \| \tilde{v} \|_{\dot{S}^{1}(I_{j} \times \mathbb{R}^{3})}^{1 + (1 - \theta)(p - 1)} + \| \tilde{v} \tilde{w}^{p - 1} e^{-(p - 3) \tau} (\frac{|y|}{\sinh |y|})^{p - 1} \|_{L_{\tau}^{1} L_{y}^{2}} \lesssim \sup_{\tau} E(\tilde{v})^{1/2}.
\end{equation}
Therefore,
\begin{equation}\label{4.78}
\| \tilde{v} \|_{L_{\tau, y}^{8}([\delta_{0}, \infty) \times \mathbb{R}^{3})} < \infty.
\end{equation}
Interpolating this bound with $(\ref{4.38})$ and $(\ref{4.66.1})$ then implies
\begin{equation}\label{4.79}
\int_{\tau_{0}}^{\infty} \int e^{-(p - 3) \tau} (\frac{|y|}{\sinh |y|})^{p - 1} |\tilde{v}(\tau, y)|^{2(p - 1)} dy d\tau < \infty.
\end{equation}
Using the change of variables formula, since $p - 1 > 2$,
\begin{equation}\label{4.80}
\aligned
\int_{\tau_{0}}^{\infty} \int e^{-(p - 3) \tau} (\frac{|y|}{\sinh |y|})^{p - 1} |\tilde{v}(\tau, y)|^{2(p - 1)} dy d\tau \\
= \int_{\tau_{0}}^{\infty} \int e^{2 \tau} |v(e^{\tau} \cosh |y|, e^{\tau} \sinh |y| \frac{y}{|y|})|^{p - 1} (\frac{e^{\tau} \sinh |y|}{|y|})^{p - 1} |v(e^{\tau} \cosh |y|, e^{\tau} \sinh |y| \frac{y}{|y|})|^{p - 1} dy d\tau \\
\geq \int_{t^{2} - |x|^{2} \geq e^{2 \tau_{0}}} |v(t, x)|^{2(p - 1)} dx dt.
\endaligned
\end{equation}
Since $v$ is supported in $(\ref{4.24})$ with $\delta = \tau_{0}$ and $t \geq 2$,
\begin{equation}\label{4.81}
\| v \|_{L_{t,x}^{2(p - 1)}([2, \infty) \times \mathbb{R}^{3})} < \infty.
\end{equation}
The global well-posedness results of the previous section combined with $(\ref{4.81})$ implies $(\ref{4.11})$.

\section{Local energy decay}
Theorem $\ref{t4.5}$ is proved using a virial identity. Let
\begin{equation}\label{5.1}
M(\tau) = \int \frac{y}{(1 + |y|^{2})^{1/2}} \tilde{v}_{\tau} \cdot \nabla \tilde{v} dy + \int \frac{1}{(1 + |y|^{2})^{1/2}} \tilde{v}_{\tau} \tilde{v} dy.
\end{equation}
By Hardy's inequality, $\sup_{\tau \in [1, T]} M(\tau) \lesssim \sup_{\tau \in [1, T]} E(\tilde{v})$. By direct computation,
\begin{equation}\label{5.2}
\aligned
\frac{d}{d\tau} M(t) = \int \frac{y}{(1 + |y|^{2}} \tilde{v}_{\tau \tau} \cdot \nabla \tilde{v} dy + \int \frac{y}{(1 + |y|^{2})^{1/2}} \tilde{v}_{\tau} \cdot \nabla \tilde{v}_{\tau} dy \\
+ \int \frac{1}{(1 + |y|^{2})^{1/2}} \tilde{v}_{\tau \tau} \tilde{v} dy + \int \frac{1}{(1 + |y|^{2})^{1/2}} \tilde{v}_{\tau}^{2} dy.
\endaligned
\end{equation}
Integrating by parts,
\begin{equation}\label{5.3}
\frac{1}{2} \int \frac{y}{(1 + |y|^{2})^{1/2}} \cdot \nabla(\tilde{v}_{\tau}^{2}) dy + \int \frac{1}{(1 + |y|^{2})^{1/2}} \tilde{v}_{\tau}^{2} dy = -\frac{1}{2} \int \frac{1}{(1 + |y|^{2})^{3/2}} \tilde{v}_{\tau}^{2} dy.
\end{equation}
Substituting $(\ref{4.30})$,
\begin{equation}\label{5.4}
\tilde{v}_{\tau \tau} = \Delta \tilde{v} - e^{-(p - 3) \tau} (\frac{|y|}{\sinh |y|})^{p - 1} |\tilde{v}|^{p - 1} \tilde{v} - e^{-(p - 3) \tau} (\frac{|y|}{\sinh |y|})^{p - 1} [|\tilde{u}|^{p - 1} \tilde{u} - |\tilde{v}|^{p - 1} \tilde{v} - |\tilde{w}|^{p - 1} \tilde{w}].
\end{equation}
Integrating by parts,
\begin{equation}\label{5.5}
\aligned
\int \frac{y}{(1 + |y|^{2})^{1/2}} \Delta \tilde{v} \cdot \nabla \tilde{v} dy + \int \frac{1}{(1 + |y|^{2})^{1/2}} \Delta \tilde{v} \tilde{v} dy \\
= -\frac{1}{2} \int \frac{1}{(1 + |y|^{2})^{1/2}} |\nabla \tilde{v}|^{2} - \frac{1}{2} \int \frac{|y|^{2}}{(1 + |y|^{2})^{3/2}} |\nabla \tilde{v}|^{2} + \int \frac{|y|^{2}}{(1 + |y|^{2})^{3/2}} |\partial_{r} \tilde{v}|^{2} - \frac{1}{2} \int \frac{1}{(1 + |y|^{2})^{5/2}} \tilde{v}^{2} dy \\
\leq -\frac{1}{2} \int \frac{1}{(1 + |y|^{2})^{3/2}} |\nabla \tilde{v}|^{2} - \frac{1}{2} \int \frac{1}{(1 + |y|^{2})^{5/2}} \tilde{v}^{2} dy.
\endaligned
\end{equation}

Next, integrating by parts,
\begin{equation}\label{5.6}
\aligned
-\int \frac{y}{(1 + |y|^{2})^{1/2}} e^{-(p - 3) \tau} (\frac{|y|}{\sinh |y|})^{p - 1} |\tilde{v}|^{p - 1} \tilde{v} \cdot \nabla \tilde{v} - \int \frac{1}{(1 + |y|^{2})^{1/2}} e^{-(p - 3) \tau} (\frac{|y|}{\sinh |y|})^{p - 1} |\tilde{v}|^{p + 1} dy \\
= -\frac{1}{p + 1} \int \frac{y}{(1 + |y|^{2})^{1/2}} e^{-(p - 3) \tau} (\frac{|y|}{\sinh |y|})^{p - 1} \cdot \nabla(|\tilde{v}|^{p + 1}) dy \\ - \int \frac{1}{(1 + |y|^{2})^{1/2}} e^{-(p - 3) \tau} (\frac{|y|}{\sinh |y|})^{p - 1} |\tilde{v}|^{p + 1} dy \\
= (\frac{3}{p + 1} - 1) \int \frac{1}{(1 + |y|^{2})^{1/2}} e^{-(p - 3) \tau} (\frac{|y|}{\sinh |y|})^{p - 1} |\tilde{v}|^{p + 1} dy \\ - \frac{1}{p + 1} \int \frac{|y|^{2}}{(1 + |y|^{2})^{3/2}} e^{-(p - 3) \tau} (\frac{|y|}{\sinh |y|})^{p - 1} |\tilde{v}|^{p + 1} \\ + \frac{(p - 1)}{p + 1} \int \frac{|y|}{(1 + |y|^{2})^{1/2}} e^{-(p - 3) \tau} (\frac{|y|}{\sinh |y|})^{p - 2} (\frac{\sinh |y| - |y| \cosh |y|}{\sinh^{2} |y|}) |\tilde{v}|^{p + 1} dy  \\
\leq -\frac{p - 2}{p + 1} \int \frac{1}{(1 + |y|^{2})^{1/2}} e^{-(p - 3) \tau} (\frac{|y|}{\sinh |y|})^{p - 1} |\tilde{v}|^{p + 1} dy.
\endaligned
\end{equation}

The error terms arising from
\begin{equation}\label{5.7}
e^{-(p - 3) \tau} (\frac{|y|}{\sinh |y|})^{p - 1} [|\tilde{u}|^{p - 1} \tilde{u} - |\tilde{v}|^{p - 1} \tilde{v} - |\tilde{w}|^{p - 1} \tilde{w}]
\end{equation}
can be handled similar to the error terms in the previous section. Recalling $(\ref{4.40})$, by $(\ref{4.50})$, $(\ref{4.51})$, and Hardy's inequality,
\begin{equation}\label{5.8}
\aligned
\int_{1}^{T} \int \frac{|y| |\nabla \tilde{v}| + |\tilde{v}|}{(1 + |y|^{2})^{1/2}} e^{-(p - 3) \tau} (\frac{|y|}{\sinh |y|})^{p - 1} |\tilde{w}|^{\frac{p - 1}{2}} |\tilde{v}|^{\frac{p + 1}{2}} dy d\tau \\ \lesssim \| e^{\tau} |w(e^{\tau} \cosh |y|, e^{\tau} \sinh |y| \frac{y}{|y|})|^{\frac{p - 1}{2}} \|_{L_{\tau}^{1} L_{y}^{\infty}} \| \nabla \tilde{v} \|_{L_{\tau}^{\infty} L_{y}^{2}} \cdot \sup_{\tau \in [1, T]} (\int e^{-(p - 3) \tau} (\frac{|y|}{\sinh |y|})^{p - 1} |\tilde{v}|^{p + 1} dy)^{1/2} \\
\lesssim \epsilon \sup_{\tau \in [1, T]} E(\tilde{v}(\tau)),
\endaligned
\end{equation}
and
\begin{equation}\label{5.9}
\aligned
\int_{1}^{T} \int \frac{|y| |\nabla \tilde{v}| + |\tilde{v}|}{(1 + |y|^{2})^{1/2}} e^{-(p - 3) \tau} (\frac{|y|}{\sinh |y|})^{p - 1} |\tilde{w}|^{p - 1} |\tilde{v}| dy d\tau \\ \lesssim \| e^{\tau} |w(e^{\tau} \cosh |y|, e^{\tau} \sinh |y| \frac{y}{|y|})|^{\frac{p - 1}{2}} \|_{L_{\tau}^{1} L_{y}^{\infty}} \| \nabla \tilde{v} \|_{L_{\tau}^{\infty} L_{y}^{2}}^{2} \| e^{\tau} |w(e^{\tau} \cosh |y|, e^{\tau} \sinh |y| \frac{y}{|y|})|^{\frac{p - 1}{2}} \|_{L_{\tau}^{\infty} L_{y}^{\infty}} \\
\lesssim \epsilon \sup_{\tau \in [1, T]} E(\tilde{v}(\tau)).
\endaligned
\end{equation}
Next, by H{\"o}lder's inequality,
\begin{equation}\label{5.10}
\aligned
\int_{1}^{T} \int \frac{1}{(1 + |y|^{2})^{1/2}} e^{-(p - 3) \tau} (\frac{|y|}{\sinh |y|})^{p - 1} |\tilde{v}|^{p} |\tilde{w}| dy d\tau \\
\lesssim (\int_{1}^{T} \int e^{-(p - 3) \tau} (\frac{|y|}{\sinh |y|})^{p - 1} |\tilde{v}|^{p + 1} dy d\tau)^{\frac{p - 2}{p - 1}} (\int_{1}^{T} \int \frac{1}{(1 + |y|^{2}} \frac{1}{\cosh^{2}(|y|)} |\tilde{v}|^{2} dy d\tau)^{\frac{1}{p - 1}} \\ \times \| e^{\frac{2}{p - 1} \tau} \cosh^{\frac{2}{p - 1}}(|y|) |w(e^{\tau} \cosh |y|, e^{\tau} \sinh |y| \frac{y}{|y|})| \|_{L_{\tau, y}^{\infty}} \\
\lesssim \epsilon \int_{1}^{T} \int e^{-(p - 3) \tau} (\frac{|y|}{\sinh |y|})^{p - 1} |\tilde{v}|^{p + 1} dy d\tau + \epsilon \int_{1}^{T} \int \frac{1}{(1 + |y|^{2})^{5/2}} |\tilde{v}|^{2} dy d\tau.
\endaligned
\end{equation}

Turning to
\begin{equation}\label{5.11}
\int \frac{y}{(1 + |y|^{2})^{1/2}} e^{-(p - 3) \tau} (\frac{|y|}{\sinh |y|})^{p - 1} \tilde{w} \cdot \nabla(|\tilde{v}|^{p - 1} \tilde{v}) dy,
\end{equation}
consider
\begin{equation}\label{5.12}
\int \frac{y}{(1 + |y|^{2})^{1/2}} e^{-(p - 3) \tau} (\frac{|y|}{\sinh |y|})^{p - 1} \tilde{w}_{j} \cdot \nabla(|\tilde{v}|^{p - 1} \tilde{v}) dy,
\end{equation}
for a fixed $j \in \mathbb{Z}$. Define a modified Littlewood--Paley function, this time in space. This function is similar to $(\ref{4.53})$. Let
\begin{equation}\label{5.13}
\tilde{P}_{0} f = \int \psi(y - z) f(z), \qquad \text{when} \qquad k > 0, \qquad \tilde{P}_{k} f = 2^{3k} \int \psi(2^{k}(y - z)) f(z) dz - 2^{3(k - 1)} \int \psi(2^{k - 1}(y - z)) f(z),
\end{equation}
where $\psi \in C_{0}^{\infty}(\mathbb{R}^{3})$ is supported on $|y| \leq 1$ and $\int \psi(y) dy = 1$. Now make a partition of unity
\begin{equation}\label{5.14}
1 = \sum_{m \geq 0} \chi(|y| - m).
\end{equation}
Define $k(m, \tau) = \sup \{ 0, \frac{m}{\ln(2)} + \frac{\tau}{\ln(2)} + j \}$. Since $\frac{|y|}{\sinh |y|} \sim \frac{|z|}{\sinh |z|}$ when $|y - z| \leq 1$,
\begin{equation}\label{5.15}
\aligned
\sum_{m \geq 0} \int \frac{y}{(1 + |y|^{2})^{1/2}} e^{-(p - 3) \tau} (\frac{|y|}{\sinh |y|})^{p - 1} \chi(|y| - m) \tilde{w}_{j}(\tau, y) \cdot \nabla (|P_{\leq k(m, \tau)}|^{p - 1} (P_{\leq k(m, \tau)} \tilde{v})) dy \\
\lesssim \sum_{m \geq 0} 2^{k(m, \tau) \cdot \frac{p - 3}{p - 1}} (\int_{m - 2 \leq |y| \leq m + 2} (\frac{|y|}{\sinh |y|})^{p - 1} e^{-(p - 3) \tau} |\tilde{v}|^{p + 1} dy)^{\frac{p - 2}{p - 1}} \\ \times (\int_{m - 2 \leq |y| \leq m + 2} \frac{1}{\cosh^{2} |y|} |\nabla \tilde{v}|^{2} + \frac{1}{|y|^{2} \cosh^{2} |y|} |\tilde{v}|^{2} dy)^{\frac{1}{p - 1}} \\ \times (\sup_{m - 2 \leq |y| \leq m + 2} |e^{\frac{2}{p - 1} \tau} \cosh^{\frac{2}{p - 1}} |y| w_{j}(e^{\tau} \cosh |y|, e^{\tau} \sinh |y| \frac{y}{|y|})|) \\
\lesssim a_{j} (\int (\frac{|y|}{\sinh |y|})^{p - 1} e^{-(p - 3) \tau} |\tilde{v}|^{p + 1} dy)^{\frac{p - 2}{p - 1}} (\int \frac{1}{\cosh^{2} |y|} |\nabla \tilde{v}|^{2} + \frac{1}{|y|^{2} \cosh^{2} |y|} |\tilde{v}|^{2} dy)^{\frac{1}{p - 1}}.
\endaligned
\end{equation}
Integrating by parts,
\begin{equation}\label{5.16}
\aligned
\sum_{m \geq 0} \int \frac{y}{(1 + |y|^{2})^{1/2}} e^{-(p - 3) \tau} (\frac{|y|}{\sinh |y|})^{p - 1} \chi(|y| - m) \tilde{w}_{j}(\tau, y) \cdot \nabla (|\tilde{v}|^{p - 1} |P_{\geq k(m, \tau)} \tilde{v}|) dy \\
= \sum_{m \geq 0} \int \frac{y}{(1 + |y|^{2})^{1/2}} e^{-(p - 3) \frac{p - 2}{p - 1} \tau} (\frac{|y|}{\sinh |y|})^{p - 2} \chi(|y| - m) e^{\frac{2}{p - 1} \tau} w_{j}(e^{\tau} \cosh |y|, e^{\tau} \sinh |y| \frac{y}{|y|}) \\ \cdot \nabla (|\tilde{v}|^{p - 1} |P_{\geq k(m, \tau)} \tilde{v}|) dy \\
= -\sum_{m \geq 0} \int \frac{y}{(1 + |y|^{2})^{1/2}} e^{-(p - 3) \frac{p - 2}{p - 1} \tau} (\frac{|y|}{\sinh |y|})^{p - 2} \chi(|y| - m)  (|\tilde{v}|^{p - 1} |P_{\geq k(m, \tau)} \tilde{v}|) \\ \cdot e^{\frac{2}{p - 1} \tau} [e^{\tau} \sinh |y| \frac{y}{|y|} (\partial_{t} w_{j}) + e^{\tau} \cosh |y| \frac{y}{|y|} (\partial_{r} w_{j})](e^{\tau} \cosh |y|, e^{\tau} \sinh |y| \frac{y}{|y|}) dy \\ 
-\sum_{m \geq 0} \int \frac{|y|}{(1 + |y|^{2})^{1/2}} e^{-(p - 3) \frac{p - 2}{p - 1} \tau} (\frac{|y|}{\sinh |y|})^{p - 2} \chi'(|y| - m)  e^{\frac{2}{p - 1} \tau} (w_{j})(e^{\tau} \cosh |y|, e^{\tau} \sinh |y| \frac{y}{|y|}) \\ \times (|\tilde{v}|^{p - 1} |P_{\geq k(m, \tau)} \tilde{v}|) dy \\ 
-\sum_{m \geq 0} \int \nabla \cdot (\frac{y}{(1 + |y|^{2})^{1/2}} e^{-(p - 3) \frac{p - 2}{p - 1} \tau} (\frac{|y|}{\sinh |y|})^{p - 2}) \chi(|y| - m)  e^{\frac{2}{p - 1} \tau} (w_{j})(e^{\tau} \cosh |y|, e^{\tau} \sinh |y| \frac{y}{|y|}) \\ \times (|\tilde{v}|^{p - 1} |P_{\geq k(m, \tau)} \tilde{v}|) dy.
\endaligned
\end{equation}
Then using the fundamental theorem of calculus, as in $(\ref{4.62})$, by Lemma $\ref{l4.1}$,
\begin{equation}\label{5.17}
\aligned
-\sum_{m \geq 0} \int \frac{y}{(1 + |y|^{2})^{1/2}} e^{-(p - 3) \frac{p - 2}{p - 1} \tau} (\frac{|y|}{\sinh |y|})^{p - 2} \chi(|y| - m)  (|\tilde{v}|^{p - 1} |P_{\geq k(m, \tau)} \tilde{v}|) \\ \cdot e^{\frac{2}{p - 1} \tau} [e^{\tau} \sinh |y| \frac{y}{|y|} (\partial_{t} w_{j}) + e^{\tau} \cosh |y| \frac{y}{|y|} (\partial_{r} w_{j})](e^{\tau} \cosh |y|, e^{\tau} \sinh |y| \frac{y}{|y|}) dy \\
\lesssim a_{j} (\int (\frac{|y|}{\sinh |y|})^{p - 1} e^{-(p - 3) \tau} |\tilde{v}|^{p + 1} dy)^{\frac{p - 2}{p - 1}} (\int \frac{1}{\cosh^{2} |y|} |\nabla \tilde{v}|^{2} + \frac{1}{|y|^{2} \cosh^{2} |y|} |\tilde{v}|^{2} dy)^{\frac{1}{p - 1}}.
\endaligned
\end{equation}
Also,
\begin{equation}\label{5.18}
\aligned
-\sum_{m \geq 0} \int \frac{|y|}{(1 + |y|^{2})^{1/2}} e^{-(p - 3) \frac{p - 2}{p - 1} \tau} (\frac{|y|}{\sinh |y|})^{p - 2} \chi'(|y| - m)  e^{\frac{2}{p - 1} \tau} (w_{j})(e^{\tau} \cosh |y|, e^{\tau} \sinh |y| \frac{y}{|y|}) \\ \times (|\tilde{v}|^{p - 1} |P_{\geq k(m, \tau)} \tilde{v}|) dy \\ 
-\sum_{m \geq 0} \int \nabla \cdot (\frac{y}{(1 + |y|^{2})^{1/2}} e^{-(p - 3) \frac{p - 2}{p - 1} \tau} (\frac{|y|}{\sinh |y|})^{p - 2}) \chi(|y| - m)  e^{\frac{2}{p - 1} \tau} (w_{j})(e^{\tau} \cosh |y|, e^{\tau} \sinh |y| \frac{y}{|y|}) \\ \times (|\tilde{v}|^{p - 1} |P_{\geq k(m, \tau)} \tilde{v}|) dy \\
\lesssim a_{j} (\int (\frac{|y|}{\sinh |y|})^{p - 1} e^{-(p - 3) \tau} |\tilde{v}|^{p + 1} dy)^{\frac{p - 2}{p - 1}} (\int \frac{1}{\cosh^{2} |y|} |\nabla \tilde{v}|^{2} + \frac{1}{|y|^{2} \cosh^{2} |y|} |\tilde{v}|^{2} dy)^{\frac{1}{p - 1}}.
\endaligned
\end{equation}
Therefore, we have proved
\begin{equation}\label{5.19}
\aligned
\int_{1}^{T} \int \frac{y}{(1 + |y|^{2})^{1/2}} e^{-(p - 3) \tau} (\frac{|y|}{\sinh |y|})^{p - 1} \tilde{w}_{j}(\tau, y) \cdot \nabla (|\tilde{v}|^{p - 1} \tilde{v})) dy d\tau \\ \lesssim \epsilon \int_{1}^{T} \int e^{-(p - 3) \tau} (\frac{|y|}{\sinh |y|})^{p - 1} |\tilde{v}|^{p + 1} dy d\tau + \epsilon \int_{1}^{T} \int \frac{1}{(1 + |y|^{2})^{5/2}} |\tilde{v}|^{2} dy d\tau,
\endaligned
\end{equation}
which completes the proof of Theorem $\ref{t4.5}$.

\section{Profile decomposition argument}
Having obtained a scattering result for any $u_{0} \in B_{1,1}^{\frac{3}{2} + s_{c}}$, $u_{1} \in B_{1,1}^{\frac{1}{2} + s_{c}}$, it only remains to show that this bound is uniform over all $(u_{0}, u_{1})$ satisfying
\begin{equation}\label{6.1}
\| (u_{0}, u_{1}) \|_{B_{1,1}^{\frac{3}{2} + s_{c}} \times B_{1,1}^{\frac{1}{2} + s_{c}}} \leq A,
\end{equation}
for some $A < \infty$. The proof argument is exactly parallel to the arguments in \cite{dodson2018global}, \cite{dodson2018global2}, and especially in \cite{dodson2018globalAPDE}. Here we are in the nonradial setting, however, we are aided by the fact that the nonlinearity is not the Lorentz invariant nonlinearity.

Let $(u_{0}^{n}, u_{1}^{n})$ be a bounded sequence in $B_{1,1}^{\frac{3}{2} + s_{c}} \times B_{1,1}^{\frac{1}{2} + s_{c}}$. Since this sequence is bounded in $\dot{H}^{s_{c}} \times \dot{H}^{s_{c} - 1}$, then by Theorem $3.1$ in \cite{ramos2012refinement}, we may make the profile decomposition
\begin{equation}\label{6.2}
S(t)(u_{0, n}, u_{1, n}) = \sum_{j = 1}^{N} \Gamma_{j}^{n} S(t)(\phi_{0}^{j}, \phi_{1}^{j}) + S(t)(R_{0, n}^{N}, R_{1, n}^{N}),
\end{equation}
where
\begin{equation}\label{6.3}
\lim_{N \rightarrow \infty} \limsup_{n \rightarrow \infty} \| S(t)(R_{0, n}^{N}, R_{1, n}^{N}) \|_{L_{t,x}^{2(p - 1)}(\mathbb{R} \times \mathbb{R}^{3})} = 0.
\end{equation}
The group $\Gamma_{j}^{n}$ is the group of operators generated by translation in space and in time, and also by the scaling symmetry. That is, there exist $x_{j}^{n} \in \mathbb{R}^{3}$, $t_{j}^{n} \in \mathbb{R}$, and $\lambda_{j}^{n} \in (0, \infty)$ such that
\begin{equation}\label{6.4}
\Gamma_{j}^{n} v(t,x) = (\lambda_{j}^{n})^{\frac{2}{p - 1}} v(\lambda_{j}^{n} (t - t_{j}^{n}), \lambda_{j}^{n} (x - x_{j}^{n})).
\end{equation}
Furthermore, the $\Gamma_{j}^{n}$'s have the asymptotic orthogonality property that when $j \neq k$,
\begin{equation}\label{6.5}
\lim_{n \rightarrow \infty} |\ln(\frac{\lambda_{j}^{n}}{\lambda_{k}^{n}})| + (\lambda_{j}^{n})^{1/2} (\lambda_{k}^{n})^{1/2} (|x_{j}^{n} - x_{k}^{n}| + |t_{j}^{n} - t_{k}^{n}|) = \infty.
\end{equation}
Using the dispersive estimate in $(\ref{1.9})$, $\frac{|t_{j}^{n}|}{\lambda_{j}^{n}}$ is uniformly bounded for any $j$.
\begin{lemma}\label{l6.1}
If $\frac{|t_{j}^{n}|}{\lambda_{j}^{n}} \rightarrow \infty$ then $\phi_{0}^{j} = 0$ and $\phi_{1}^{j} = 0$.
\end{lemma}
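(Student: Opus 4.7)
The plan is to exploit the pointwise dispersive decay that the $B_{1,1}^{\frac{3}{2}+s_{c}} \times B_{1,1}^{\frac{1}{2}+s_{c}}$ regularity confers on the free evolution, together with the standard identification of each profile as a weak limit. First I would observe that the argument used to prove $(\ref{2.23})$ (interpolating the dispersive estimate $(\ref{2.21})$ with the Sobolev bound $(\ref{2.22})$ and summing over Littlewood--Paley pieces) is local only because it is applied to the nonlinear solution on $[-1,1]$; when run instead purely on the linear piece $S(t)(u_{0,n},u_{1,n})$, the same interpolation yields the global scale-invariant estimate
\[
\sup_{t \neq 0} |t|^{\frac{3}{2} - s_{c}} \|S(t)(u_{0,n}, u_{1,n})\|_{L^{\infty}(\mathbb{R}^{3})} \lesssim A,
\]
together with its $\partial_{t}$-analogue with exponent $|t|^{\frac{5}{2}-s_{c}}$, where $A = \sup_{n} \|(u_{0,n}, u_{1,n})\|_{B_{1,1}^{\frac{3}{2}+s_{c}} \times B_{1,1}^{\frac{1}{2}+s_{c}}}$.

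Next I would recover the profile as a weak limit. Applying $(\Gamma_{j}^{n})^{-1}$ to both sides of $(\ref{6.2})$ and evaluating at $t = 0$ yields
\[
(\Gamma_{j}^{n})^{-1}S(0)(u_{0,n},u_{1,n}) = \phi_{0}^{j} + \sum_{k \neq j} (\Gamma_{j}^{n})^{-1}\Gamma_{k}^{n}S(0)(\phi_{0}^{k},\phi_{1}^{k}) + (\Gamma_{j}^{n})^{-1}S(0)(R_{0,n}^{N},R_{1,n}^{N}).
\]
The standard argument from \cite{ramos2012refinement} shows that the asymptotic orthogonality $(\ref{6.5})$ forces each cross term to tend weakly to $0$ in $\dot{H}^{s_{c}}$, while $(\ref{6.3})$ controls the remainder in the iterated limit $N \to \infty$ after $n \to \infty$; hence $\phi_{0}^{j}$ is the weak $\dot{H}^{s_{c}}$-limit of $(\Gamma_{j}^{n})^{-1}S(0)(u_{0,n},u_{1,n})$. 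A direct calculation from $(\ref{6.4})$ gives
\[
(\Gamma_{j}^{n})^{-1} S(0)(u_{0,n},u_{1,n})(\xi) = (\lambda_{j}^{n})^{-\frac{2}{p-1}} \bigl[ S(t_{j}^{n})(u_{0,n},u_{1,n}) \bigr]\!\bigl( x_{j}^{n} + (\lambda_{j}^{n})^{-1} \xi \bigr).
\]

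The final step is a scaling comparison. Recalling $\frac{2}{p-1} = \frac{3}{2}-s_{c}$, combining the two displays above produces
\[
\|(\Gamma_{j}^{n})^{-1} S(0)(u_{0,n},u_{1,n})\|_{L^{\infty}(\mathbb{R}^{3})} \lesssim A\, (\lambda_{j}^{n})^{-(\frac{3}{2}-s_{c})} |t_{j}^{n}|^{-(\frac{3}{2}-s_{c})},
\]
which under the lemma's hypothesis tends to $0$. Since the sequence is also uniformly bounded in $\dot{H}^{s_{c}}(\mathbb{R}^{3})$ by scale invariance of that norm, uniform $L^{\infty}$-convergence to $0$ promotes to weak convergence to $0$ in $\dot{H}^{s_{c}}$, forcing $\phi_{0}^{j} = 0$. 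Running the same argument with $\partial_{t}S(t)(u_{0,n},u_{1,n})$ in place of $S(t)(u_{0,n},u_{1,n})$ and the companion decay rate $|t|^{-(\frac{5}{2}-s_{c})}$ then gives $\phi_{1}^{j} = 0$.

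The main obstacle is making the weak-limit extraction rigorous---specifically, verifying from $(\ref{6.5})$ that each cross term $(\Gamma_{j}^{n})^{-1}\Gamma_{k}^{n}S(0)(\phi_{0}^{k},\phi_{1}^{k})$ has vanishing weak limit in $\dot{H}^{s_{c}}$, and that the remainder can be handled uniformly in $j$ as $N \to \infty$. Both steps are entirely standard in the profile-decomposition framework and can be cited from \cite{ramos2012refinement}; the only genuinely new input is the scale-invariant dispersive bound, which is inherited without effort from Theorem $\ref{t2.3}$.
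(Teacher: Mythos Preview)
Your approach is essentially the same as the paper's: both use the dispersive decay inherited from the $B_{1,1}^{3/2+s_c}\times B_{1,1}^{1/2+s_c}$ regularity of the data to show that the would-be profile is the weak limit of a sequence converging to zero. The paper cites directly from \cite{ramos2012refinement} the weak-$L^{2(p-1)}_{t,x}$ characterization $(\ref{6.6})$ of the profile and then tests against a fixed Littlewood--Paley projection; you instead take the weak-$\dot H^{s_c}$ limit at $t=0$ and use a direct $L^\infty$ bound. These are minor technical variations of the same idea.

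There is, however, a scaling gap in your final step. From your own formula
\[
(\Gamma_j^n)^{-1}S(0)(u_{0,n},u_{1,n})(\xi)=(\lambda_j^n)^{-\frac{2}{p-1}}\bigl[S(t_j^n)(u_{0,n},u_{1,n})\bigr](x_j^n+(\lambda_j^n)^{-1}\xi),
\]
applying the dispersive bound to $S(t_j^n)(u_{0,n},u_{1,n})$ produces
\[
\|(\Gamma_j^n)^{-1}S(0)(u_{0,n},u_{1,n})\|_{L^\infty}\lesssim A\,(\lambda_j^n\,|t_j^n|)^{-(\frac{3}{2}-s_c)},
\]
which tends to zero iff $\lambda_j^n|t_j^n|\to\infty$, \emph{not} under the stated hypothesis $|t_j^n|/\lambda_j^n\to\infty$ (take $\lambda_j^n=n^{-2}$, $t_j^n=n^{-1}$). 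The paper avoids this by first recognizing $(\Gamma_j^n)^{-1}S(t)(u_0^n,u_1^n)$ as a free wave $S(t-T_n)(\tilde u_0^n,\tilde u_1^n)$ with \emph{rescaled} data, as in $(\ref{6.7})$, and only then applying the dispersive estimate; since the Besov norm is scale invariant, $\|(\tilde u_0^n,\tilde u_1^n)\|_{B_{1,1}^{3/2+s_c}\times B_{1,1}^{1/2+s_c}}\le A$ and one gets decay $|T_n|^{-(3/2-s_c)}$, where $T_n$ is exactly the quantity the lemma assumes diverges. You should rewrite your argument in that form; once you do, the rest goes through unchanged. (There is in fact a notational inconsistency in the paper between $(\ref{6.4})$ and $(\ref{6.7})$ as to what $T_n$ is in terms of $t_j^n,\lambda_j^n$; the lemma statement is phrased to match the latter, and your computation faithfully followed the former.)
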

\begin{proof}
Indeed, from \cite{ramos2012refinement}, for any fixed $j$,
\begin{equation}\label{6.6}
\lim_{n \rightarrow \infty} (\Gamma_{j}^{n})^{-1} S(t)(u_{0}^{n}, u_{1}^{n}) \rightharpoonup S(t)(\phi_{0}^{j}, \phi_{1}^{j})
\end{equation}
weakly in $L_{t,x}^{2(p - 1)}$. Rewriting $(\Gamma_{j}^{n})^{-1}$,
\begin{equation}\label{6.7}
(\Gamma_{j}^{n})^{-1} S(t)(u_{0}^{n}, u_{1}^{n}) = S(t - \frac{t_{j}^{n}}{\lambda_{j}^{n}})((\lambda_{j}^{n})^{-\frac{2}{p - 1}} u_{0}^{n}(\frac{x - x_{j}^{n}}{\lambda_{j}^{n}}), (\lambda_{j}^{n})^{-\frac{p + 1}{p - 1}} u_{1}^{n}(\frac{x - x_{j}^{n}}{\lambda_{j}^{n}})),
\end{equation}
and then by the dispersive estimate $(\ref{1.9})$, for any fixed Littlewood--Paley projection, if $\frac{t_{n}^{j}}{\lambda_{n}^{j}} \rightarrow \pm \infty$,
\begin{equation}\label{6.8}
S(t - \frac{t_{j}^{n}}{\lambda_{j}^{n}})((\lambda_{j}^{n})^{-\frac{2}{p - 1}} u_{0}^{n}(\frac{x - x_{j}^{n}}{\lambda_{j}^{n}}), (\lambda_{j}^{n})^{-\frac{p + 1}{p - 1}} u_{1}^{n}(\frac{x - x_{j}^{n}}{\lambda_{j}^{n}})) \rightharpoonup 0,
\end{equation}
weakly in $L_{t,x}^{2(p - 1)}$, which proves the lemma.
\end{proof}

Since $t_{j}^{n}$ is bounded for any $j$, after passing to a subsequence, $t_{j}^{n} \rightarrow t_{j}$. Absorbing the remainder into $R_{N}$, we may rewrite $(\ref{6.2})$ with $\Gamma_{j}^{n}$ having no translation in time, that is,
\begin{equation}\label{6.9}
\Gamma_{j}^{n} v(t,x) = (\lambda_{j}^{n})^{\frac{2}{p - 1}} v(\lambda_{j}^{n} t, \lambda_{j}^{n} (x - x_{j}^{n})).
\end{equation}
Furthermore, since
\begin{equation}\label{6.10}
(\lambda_{j}^{n})^{\frac{2}{p - 1}} u_{0}(\lambda_{j}^{n} x) \rightharpoonup \phi_{0}^{j}, \qquad \text{and} \qquad (\lambda_{j}^{n})^{\frac{2}{p - 1} + 1} u_{1}(\lambda_{j}^{n} x) \rightharpoonup \phi_{1}^{j},
\end{equation}
we have the bounds
\begin{equation}\label{6.11}
\| \phi_{0}^{j} \|_{B_{1,1}^{\frac{3}{2} + s_{c}}} + \| \phi_{1}^{j} \|_{B_{1,1}^{\frac{1}{2} + s_{c}}} \leq A.
\end{equation}
Therefore, the solution to $(\ref{1.1})$ with initial data equal to $(\phi_{0}^{j}, \phi_{1}^{j})$ has a finite $L_{t,x}^{2(p - 1)}$ norm. Furthermore,
\begin{equation}\label{6.12}
\lim_{N \rightarrow \infty} \sum_{j = 1}^{N} \| (\phi_{0}^{j}, \phi_{1}^{j}) \|_{\dot{H}^{s_{c}} \times \dot{H}^{s_{c} - 1}}^{2} \leq \limsup_{n \rightarrow \infty} \| (u_{0,n}, u_{1, n}) \|_{\dot{H}^{s_{c}} \times \dot{H}^{s_{c} - 1}}^{2},
\end{equation}
so for only finitely many $j$, $\| (\phi_{0}^{j}, \phi_{1}^{j}) \|_{\dot{H}^{s_{c}} \times \dot{H}^{s_{c} - 1}} \geq \epsilon$. If $\| (\phi_{0}^{j}, \phi_{1}^{j}) \|_{\dot{H}^{s_{c}} \times \dot{H}^{s_{c} - 1}} \leq \epsilon$, then the solution to $(\ref{1.1})$ with initial data $(\phi_{0}^{j}, \phi_{1}^{j})$ has the bound
\begin{equation}\label{6.14}
\| u \|_{L_{t,x}^{2(p - 1)}} \lesssim \| (\phi_{0}^{j}, \phi_{1}^{j}) \|_{\dot{H}^{s_{c}} \times \dot{H}^{s_{c} - 1}}.
\end{equation}
Therefore, by standard perturbative arguments combined with the asymptotic orthogonality in $(\ref{6.5})$, if $u_{n}$ is the solution to $(\ref{1.1})$ with initial data $(u_{0,n}, u_{1, n})$,
\begin{equation}\label{6.13}
\lim_{n \rightarrow \infty} \| u_{n} \|_{L_{t,x}^{2(p - 1)}} < \infty.
\end{equation}
Thus, there must exist a uniform upper bound on the $L_{t,x}^{2(p - 1)}$ norm of a solution $u$ to $(\ref{1.1})$ whose initial data has bounded Besov norm.

\section*{Acknowledgements}
During the time of writing this paper, the author was partially supported by NSF Grant DMS-$1764358$. The author is also grateful to Andrew Lawrie and Walter Strauss for some helpful conversations at MIT on subcritical nonlinear wave equations.

\bibliography{biblio}
\bibliographystyle{alpha}

\end{document}